\def\setliststart#1{\setcounter{\@listctr}{#1}%
  \addtocounter{\@listctr}{-1}}
 \newtheorem{The}{Theorem}[section]
 \newtheorem{Cor}[The]{Corollary}
 \newtheorem{Lem}[The]{Lemma}
 \newtheorem{Pro}[The]{Proposition}
 \theoremstyle{definition}
 \newtheorem{defn}[The]{Definition}
 \newtheorem{Result}[The]{\textbf{Main Result}}
 \theoremstyle{remark}
 \newtheorem{Rem}[The]{Remark}
 \numberwithin{equation}{section}
\newcommand{\R}{\mathbb{R}}
\newcommand{\N}{\mathbb{N}}
\newcommand{\SING}{\mbox{\rm Sing$\,(u)$}}
\newcommand{\CUT}{\mbox{\rm Cut}\,(u)}
\newcommand{\BSING}{\overline{\SING}}
\newcommand{\IU}{\mathcal{I}(u)}
\title[Global generalized characteristics for the Dirichlet problem]{{\small Global generalized characteristics\\for the Dirichlet problem for Hamilton-Jacobi equations\\at a supercritical energy level}}
\author{Piermarco Cannarsa \and Wei Cheng \and Marco Mazzola \and Kaizhi Wang}
\address{Dipartimento di Matematica, Universit\`a di Roma ``Tor Vergata'',
Via della Ricerca Scientifica 1, 00133 Roma, Italy}
\email{cannarsa@mat.uniroma2.it}
\address{Department of Mathematics, Nanjing University, Nanjing 210093, China}
\email{chengwei@nju.edu.cn}
\address{IMJ-PRG, Sorbonne Universit\'e, CNRS, case 247, 4 place Jussieu, Paris, France}
\email{marco.mazzola@imj-prg.fr}
\address{School of Mathematical Sciences, Shanghai Jiao Tong University, Shanghai 200240, China}
\email{kzwang@sjtu.edu.cn}
\date{\today}
\subjclass[2010]{35F21, 49L25, 37J50}
\keywords{Dirichlet boundary conditions, Hamilton-Jacobi equation, weak KAM theory, propagation of singularities.}
\begin{document}
\maketitle

\begin{abstract}
	We study the nonhomogeneous Dirichlet problem for first order Hamilton-Jacobi equations associated with Tonelli Hamiltonians on a bounded domain $\Omega$ of $\R^n$ assuming the energy level to be supercritical. First, we show that the viscosity (weak KAM) solution of such a problem is Lipschitz continuous and locally semiconcave in $\Omega$. Then, we analyse the singular set of a solution showing that singularities propagate along suitable curves, the so-called generalized characteristics, and that such curves stay singular unless they reach the boundary of $\Omega$. Moreover, we prove that the latter is never the case for mechanical systems and that singular generalized characteristics converge to a critical point of the solution in finite or infinite time. Finally, under stronger assumptionsfor the domain and Dirichlet data, we are able to conclude that solutions are globally semiconcave and semiconvex near the boundary.
\end{abstract}

\tableofcontents

\section{Introduction}
Let $\Omega\subset \R^n$ be a bounded Lipschitz domain and let $H:\R^n\times\R^n\to\R$ be a Tonelli Hamiltonian satisfying Fathi-Maderna's conditions (see \cite{Fathi-Maderna} and section~\ref{se:DP} below). We consider the Dirichlet boundary-value problem for a first-order Hamilton-Jacobi equation
\begin{equation}\label{eq:HJ_BP1}
	\begin{cases}
	H(x,Du)=0 \quad \text{in}\ \Omega, \\
	u\big|_{\partial\Omega}=g,
	\end{cases}
\end{equation}
where $g$ is a given continuous function on $\partial\Omega$. 
The purpose of this paper is to study the propagation of singularities of the viscosity solution $u$ of \eqref{eq:HJ_BP1} and the structure of the cut locus of $u$, $\CUT$, as well as the singular set of $u$, $\SING$. Our interest in these problems has several  motivations, some of which are described below.

First, in  weak KAM theory (\cite{Fathi-book,Fathi-Maderna,Fathi-Siconolfi2004,Contreras}), one considers the Hamilton-Jacobi equation
\begin{equation}\label{eq:weak_KAM}
	H(x,Du)=c,\quad x\in M,
\end{equation}
where $M$ is a  smooth connected manifold without boundary and $c\in\R$ is  Ma\~n\'e's critical value (\cite{Mane}). A function $u$ is said to be a weak KAM solution of \eqref{eq:weak_KAM}  if it is a fixed point of the map $u\mapsto T^-_tu+ct$ for all $t>0$, where $T^-_t$ denotes the negative type Lax-Oleinik operator (see, for instance, \cite{Fathi-book}). On the other hand, if one looks at the relevant projected Aubry set $\mathcal{A}$ as a virtual boundary (see, for instance, \cite{Ishii-Mitake} and \cite{Cui}), then \eqref{eq:weak_KAM} can be also understood as a Dirichlet problem by taking $\Omega=M\setminus\mathcal{A}$ in \eqref{eq:HJ_BP1}. Therefore, it is useful to deal with \eqref{eq:HJ_BP1} under more general boundary conditions, and, it will help us to obtain more information on the relation between the regularity properties of $\partial \Omega$ and  $g$  and the structure of $\CUT$. 

Second, the global propagation of singularities for the eikonal equation 
$$
|Du|^2-1=0
$$
with homogeneous boundary conditions
$$
u|_{\partial \Omega}=0
$$
was proved in \cite{ACNS} by ``quantitative" methods. For weak KAM solutions of equation \eqref{eq:weak_KAM} on the whole space, the analogous result of global propagation was obtained in
 \cite{Cannarsa-Cheng3} by a ``qualitative" approach. Indeed, in \cite{Cannarsa-Cheng3}, the problem was solved by using the positive type Lax-Oleinik semigroup which gives an intrinsic explanation of the propagation of singularities only according to the associated system of characteristics. Later, in \cite{Cannarsa-Cheng-Fathi}, the method was applied to obtain  topological results for $\CUT$ and $\SING$ such as the homotopy equivalence between the complement of the projected Aubry set of $u$ and $\CUT$ or $\SING$, and the local path-connectedness of $\CUT$ and $\SING$. Indeed, the method developed in \cite{Cannarsa-Cheng3} and \cite{Cannarsa-Cheng-Fathi} can be applied to various kinds of problems as this paper will confirm.

Third,  problem \eqref{eq:HJ_BP1} is closely related to  optimal exit time problems in  control theory. 
It can also be regarded as a first step towards the analysis of constrained optimal control problems from the point of view of weak KAM theory.

In order to apply the methods developed in \cite{Cannarsa-Cheng3} and \cite{Cannarsa-Cheng-Fathi} to \eqref{eq:HJ_BP1}, we need a representation formula for the solution of \eqref{eq:HJ_BP1}.  For any $x,y\in\overline{\Omega}$ and any $s<t$, we define the set of admissible arcs from $x$ to $y$ as
\begin{align*}
	\Gamma^{s,t}_{x,y}(\overline{\Omega})=\{\xi\in W^{1,1}([s,t];\R^n)~:~\xi(\tau)\in\overline{\Omega}\,, \forall\ \tau\in[s,t];\ \xi(s)=x;\ \xi(t)=y\}.
\end{align*}
For any $x,y\in\overline{\Omega}$ and $t>0$, we define the fundamental solution $A^{\Omega}_t(x,y)$ relative to $\overline{\Omega}$, Ma\~n\'e's  potential $\Phi^{\Omega}_L(x,y)$ relative to $\overline{\Omega}$, and critical value $c_{\Omega}(L)$ relative to $\overline{\Omega}$, by 
\begin{align*}
A^{\Omega}_t(x,y)&:=\inf_{\xi\in\Gamma^{0,t}_{x,y}(\overline{\Omega})}\int^t_0L(\xi(s),\dot{\xi}(s))\ ds,\\
\Phi^{\Omega}_L(x,y)&:=\inf_{t>0}A_t^{\Omega}(x,y),\quad c_{\Omega}(L):=-\inf_{t>0,x\in\overline{\Omega}}\frac 1tA^{\Omega}_t(x,x).
\end{align*}

Let $u$ be the value function of the following problem:
\begin{equation}\label{eq:represent_formulae2}
	u(x)=\inf_{y\in\partial\Omega}\{g(y)+\Phi^{\Omega}_L(y,x)\},\quad x\in\overline{\Omega},
\end{equation}
where $g:\partial\Omega\to\R$ is a  continuous function satisfying
\begin{equation}\label{eq:supcritical1i}
	g(x)-g(y)\leqslant\Phi^{\Omega}_L(y,x),\quad \forall x,y\in\partial\Omega.
\end{equation}
 
Under the assumption that
\begin{equation}\label{eq:supcritical2i}
	c_{\Omega}(L)<0,
\end{equation}
one can show that $u$ in \eqref{eq:represent_formulae2} is a locally semiconcave viscosity solution of \eqref{eq:HJ_BP1} and it is Lipschitz  continuous on $\overline{\Omega}$.  We obtain the following dichotomy:

\begin{Result}
	Let $x_0\in\CUT$. Then, we have
	\begin{enumerate}[\rm (a)]
	\item either, there exists a generalized characteristic $\mathbf{x}:[0,+\infty)\to\Omega$ starting from $\mathbf{x}(0)=x_0$ such that $\mathbf{x}(s)\in\SING$ for all $s\in[0,+\infty)$,
	\item or, there exist $T>0$ and a generalized characteristic $\mathbf{x}:[0,T)\to\Omega$ starting from $\mathbf{x}(0)=x_0$ such that $\mathbf{x}(s)\in\SING$ for all $s\in[0,T)$, and a sequence of positive real numbers $\{s_k\}$ such that
	$$
	\lim_{k\to\infty}s_k=T,\quad\text{and}\quad\lim_{k\to\infty}d_{\partial\Omega}(\mathbf{x}(s_k))=0.
	$$
	\end{enumerate}
\end{Result}

Now, as is well known, local semiconcavity is not enough to obtain global propagation of singularities for $u$. So, we have to specialize our analysis as follows. 

For mechanical systems,  the associated generalized characteristics system has a unique forward solution, i.e., there exists a unique generalized characteristic from any starting point. In addition, the semi-flow generated by generalized characteristics systems has certain monotonicity properties. Therefore, we can obtain a generalized characteristic  on $[0,+\infty)$ which consists of singular points for $u$ if the starting point is a cut point. More precisely, let the Lagrangian be of the form
\begin{align*}
	L(x,v)=\frac 12\langle A(x)v,v\rangle-\langle DS(x),v\rangle-V(x),\quad (x,v)\in\R^n\times\R^n,
\end{align*}
where $A(x)$ is a symmetric and positive definite matrix $C^2$ depending on $x$ and $S$ (resp. $V$) is a $C^3$ (resp. $C^2$) function on $\R^n$. Let us further assume that
\begin{align*}
	\ \max_{x\in\overline{\Omega}}V(x)<0\quad \text{and}\quad g+S\ \text{is constant on}\ \partial\Omega.
\end{align*}
Let $L_0(x,v)=L(x,v)+\langle DS(x),v\rangle=\frac 12\langle A(x)v,v\rangle-V(x)$. Denote by $H_0$ the Hamiltonian associated with $L_0$. We obtain that

\begin{Result}
Let $v=u+S$. If $x_0\in\CUT$, then there exists a unique generalized characteristic $\mathbf{x}:[0,+\infty)\to\Omega$ with $\mathbf{x}(0)=x_0$ for $H_0(x,Dv)=0$, i.e.,  is a Lipschitz curve $\mathbf{x}$ with $\mathbf{x}(0)=x_0$ such that
\begin{align*}
	\dot{\mathbf{x}}^+(s)\in A^{-1}(\mathbf{x}(s))D^+v(\mathbf{x}(s)),\quad \forall s\in[0,+\infty).
\end{align*}
Moreover,  $\mathbf{x}(s)\in\SING$ for all $s\in[0,+\infty)$.
\end{Result}

As a consequence, we can recover all the topological results in \cite{Cannarsa-Cheng-Fathi} in this case.

\begin{Result}
	The inclusion $\SING\subset\CUT\subset\BSING\cap\Omega\subset\Omega$ are all homotopy equivalences. Moreover, for every connected component $C$ of $\Omega$ the three intersections $\SING\cap C$, $\CUT\cap C$, and $\BSING\cap C$ are path-connected.
\end{Result}

\begin{Result}
	The spaces $\SING$ and $\CUT$ are locally contractible, i.e., for every $x\in\SING$ (resp. $x\in\CUT$) and every neighborhood $V$ of $x$ in $\SING$ (resp. $\CUT$), we can find a neighborhood $W$ of $x$ in $\SING$ (resp. $\CUT$), such that $W\subset V$ and $W$ in null-homotopic in $V$. 
	
	Therefore, $\SING$ and $\CUT$ are locally path connected.
\end{Result}

On the other hand, for general Tonelli systems, we need to restrict the analysis to smoother data, that is, $\partial\Omega$ of class $C^2$ and $g$ of class $C^{1,1}$ on $\partial \Omega$. Under such conditions, $u$ can be proved to be smooth in a neighborhood of the boundary. Therefore, by using the method in \cite{Cannarsa-Cheng3} again, we get our results on the global propagation of singularities.

\begin{Result}
	Let $\Omega\subset\R^n$ be a bounded domain with $C^2$ boundary, let $L$ be a Tonelli Lagrangian satisfying  $L\geqslant\alpha >0$ and let $g$ satisfy ({\bf G1}),({\bf G2})\footnote{For precise statements of conditions ({\bf G1}) and ({\bf G2}), please see the beginning of Section 5.}. If $x_0\in\CUT$, then there exists a generalized characteristic $\mathbf{x}:[0,+\infty)\to\Omega$ starting from $\mathbf{x}(0)=x_0$ such that $\mathbf{x}(s)\in\SING$ for all $s\in[0,+\infty)$.
\end{Result}

The rest of the paper is organized as follows. Section 2 gives the basic definitions and preliminaries required for our subsequent work. In Section 3, we discuss the properties of the value function $u$ of  problem \eqref{eq:represent_formulae2}, and its relation to exit time problems. Section 4 is the main part of the present paper. It consists of two parts: for general Tonelli Lagrangian systems, we provide a result on propagation of singularities for $u$, but we cannot exclude the possibility that the singularities approach the boundary; for mechanical systems we obtain a result on global propagation of singularities for $u$ and more information on the topology of $\CUT$. In Section 5, for general Tonelli Lagrangian systems, under certain additional conditions we can get global semiconcavity of $u$ on $\overline{\Omega}$ and local semiconvexity of $u$ near the boundary, which imply the global propagation of singularities for $u$. 

\vskip.2cm
\noindent {\bf Notations}. We denote by $|\cdot|$ the Euclidean norm in $\R^n$, by $\langle\cdot,\cdot\rangle$ the inner product, by $\partial\Omega$ the boundary of $\Omega$ taken with respect to the standard topology of $\R^n$,
by $B(x,r)$ the open ball of center $x$ and radius $r>0$, by $d_{\partial \Omega}(x)$ the Euclidean distance between a point $x$ and $\partial \Omega$, by $d(S_1,S_2)$ the Euclidean distance between two subsets $S_1$ and $S_2$ of $\R^n$, by $\mathrm{co}\, S$ the convex hull of a subset $S$ of $\R^n$, by $[x,y]$ the segment with endpoints $x$, $y$, for any $x$, $y\in\R^n$, by $\mathrm{Lip}(u)$ a Lipschitz constant of a Lipschitz function $u$,
by $f_x(x,y)$, $\frac{\partial f}{\partial x}(x,y)$ or $D_xf(x,y)$ the partial derivative of a function $f(x,y)$ with respect to the variable $x$.

\section{Dirichlet problem}\label{se:DP}

Throughout this paper, we assume that $L:\R^n\times\R^n\to\R$, $(x,v)\mapsto L(x,v)$, is a $C^2$ function satisfying the following conditions.
\begin{enumerate}[(\bf{L}1)]
\item {\it Convexity}: the Hessian $\frac{\partial^2L}{\partial v^2}(x,v)$ is positive definite for all $(x,v)\in \R^n\times \R^n$.
\item {\it Superlinearity}: there exist two nondecreasing superlinear  functions $\theta_1,\theta_2:[0,+\infty)\to[0,+\infty)$ and a constant $c_0>0$ such that 
$$
\theta_2(|v|)\geqslant L(x,v)\geqslant\theta_1(|v|)-c_0,\qquad\forall  (x,v)\in\R^n\times\R^n.
$$
\end{enumerate}
We say that a function $\theta:[0,+\infty)\to[0,+\infty)$ is superlinear if $\lim_{r\to+\infty}\frac{\theta(r)}{r}=+\infty$. In the following we call a Lagrangian $L$ a {\em Tonelli Lagrangian} if it is  $C^2$ and satisfies ({\bf{L1}}) and ({\bf{L2}}).

It is not hard to check that $L$ is a Tonelli Lagrangian if and only if the associated Hamiltonian $H:\R^n\times\R^n\to\R$, $(x,p)\mapsto \sup_{v\in \R^n}\{\langle p, v\rangle-L(x,v)\}$, is of class $C^2$ and satisfies
\begin{enumerate}[(\bf{H}1)]
\item {\it Convexity}: the Hessian $\frac{\partial^2H}{\partial p^2}(x,p)$ is positive definite for all $(x,p)\in \R^n\times \R^n$.
\item {\it Superlinearity}: there exist two nondecreasing superlinear  function $\overline{\theta}_1,\overline{\theta}_2:[0,+\infty)\to[0,+\infty)$ and a constant $c'_0>0$ such that 
$$
\overline{\theta}_2(|p|)\geqslant H(x,p)\geqslant\overline{\theta}_1(|v|)-c'_0,\qquad\forall (x,p)\in\R^n\times\R^n.
$$
\end{enumerate}
A Hamiltonian is called a Tonelli Hamiltonian if it is $C^2$ and satisfies ({\bf{H1}}) and ({\bf{H2}}). 
In fact, The collection of the conditions (H1)-(H2) above is exactly Fathi-Maderna's conditions in \cite{Fathi-Maderna}.

Let $\Omega\subset\R^n$ be a domain. We consider the following Dirichlet-type Hamilton-Jacobi equation
\begin{equation}\label{eq:HJ_BP}\tag{HJ$_g$}
	\begin{cases}
	H(x,Du)=0 \quad \text{in}\ \Omega, \\
	u\big|_{\partial\Omega}=g,
	\end{cases}
\end{equation}
where $g$ is a given continuous function on $\partial\Omega$.

\subsection{Relative fundamental solutions and Ma\~n\'e's potentials}

The main object of this paper is to study the propagation of singularities of a viscosity solution $u$ of \eqref{eq:HJ_BP}, especially the global singular dynamics governed by generalized characteristics. Therefore, an intrinsic representation formula for the solution of the problem \eqref{eq:HJ_BP} is necessary and the methods developed in \cite{Cannarsa-Cheng3} can be applied. 

In the literature, such representation formulae have already been obtained using a PDE approach (see, for instance, \cite{Ishii-Mitake}) or, in the context of control theory, as a way to investigate the value function of optimal {\em exit time} problems (\cite{Bardi-Capuzzo-Dolcetta}, \cite{Cannarsa-Sinestrari}).
Here, we are mainly interested in the interpretation of such formulae from the point of view of weak KAM theory.

In order to give an intrinsic representation formula, we need to introduce the (relative) fundamental solution. For any $x,y\in\overline{\Omega}$ and any $s<t$, we define
\begin{align*}
	\Gamma^{s,t}_{x,y}(\overline{\Omega})=\{\xi\in W^{1,1}([s,t];\R^n):\xi(\tau)\in\overline{\Omega}\ \text{for all}\ \tau\in[s,t],\ \xi(s)=x,\ \xi(t)=y\},
\end{align*}
which will be denoted by $\Gamma^{s,t}_{x,y}$ if $\Omega=\R^n$. The value function of  the problem
\begin{equation}\label{eq:CV}\tag{CV}
	\inf_{\xi\in\Gamma^{0,t}_{x,y}(\overline{\Omega})}\int^t_0L(\xi(s),\dot{\xi}(s))\ ds=\inf_{\eta\in\Gamma^{-t,0}_{x,y}(\overline{\Omega})}\int^0_{-t}L(\eta(s),\dot{\eta}(s))\ ds,
\end{equation}
denoted by $A^{\Omega}_t(x,y),$ is called the {\em relative fundamental solution} of the associated Hamilton-Jacobi equation $w_t(x,t)+H(x,w_x(x,t))=0$. A solution of (\ref{eq:CV}) is called a minimizer of $A^{\Omega}_t(x,y)$.

The following proposition is an existence and regularity result for minimizers of problem \eqref{eq:CV}. 
\begin{Pro}[Tonelli theorem \cite{Cannarsa-Sinestrari}]\label{Tonelli_straint}
	For any $t>0$ and any $x,y\in\overline{\Omega}$, problem \eqref{eq:CV} admits a solution $\xi\in\Gamma^{-t,0}_{x,y}(\overline{\Omega})$ which is Lipschitz continuous on $[-t,0\,]$. Moreover, if $-t\leqslant t_1<t_2\leqslant 0$ and $\xi(s)\in\Omega$ for $s\in(t_1,t_2)$, then $\xi\vert_{(t_1,t_2)}$ is of $C^2$ class. Furthermore, the pair $(\xi(s),p(s))$, where $p(s)=\frac{\partial L}{\partial v}(\xi(s),\dot{\xi}(s))$ is called the dual arc associated with $\xi$, satisfies the Hamiltonian system
	\begin{equation}\label{eq:Ham_ODE}
		\dot{\xi}(s)=\frac{\partial H}{\partial p}(\xi(s),p(s)),\quad\dot{p}(s)=-\frac{\partial H}{\partial x}(\xi(s),p(s)),\qquad s\in(t_1,t_2).
	\end{equation}
\end{Pro}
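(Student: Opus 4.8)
\medskip

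\noindent\textbf{Proof proposal.}\quad I would establish the three assertions in order: existence of a minimizer, its Lipschitz continuity on $[-t,0]$, and the interior $C^2$ regularity together with the Hamiltonian system \eqref{eq:Ham_ODE}. To start with, $\Gamma^{-t,0}_{x,y}(\overline{\Omega})$ is nonempty because $\overline{\Omega}$ is rectifiably connected (any two of its points are joined by a Lipschitz arc contained in $\overline{\Omega}$, which is then reparametrized on $[-t,0]$). For existence I would apply the direct method to a minimizing sequence $(\xi_k)\subset\Gamma^{-t,0}_{x,y}(\overline{\Omega})$. The superlinear lower bound in (L2) gives $\sup_k\int_{-t}^0\theta_1(|\dot\xi_k(s)|)\,ds<\infty$, so by the de la Vall\'ee--Poussin criterion the velocities $(\dot\xi_k)$ are equi-integrable, hence weakly precompact in $L^1([-t,0];\R^n)$ by the Dunford--Pettis theorem; together with $\xi_k(s)\in\overline{\Omega}$ this yields, along a subsequence, $\xi_k\to\xi$ uniformly on $[-t,0]$ and $\dot\xi_k\rightharpoonup\dot\xi$ weakly in $L^1$. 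Closedness of $\overline{\Omega}$ preserves the state constraint in the limit, the endpoint conditions pass to the limit by uniform convergence, and Tonelli's lower semicontinuity theorem --- which uses the convexity (L1) of $v\mapsto L(x,v)$ --- gives $\int_{-t}^0 L(\xi,\dot\xi)\,ds\le\liminf_k\int_{-t}^0 L(\xi_k,\dot\xi_k)\,ds$, so $\xi$ minimizes \eqref{eq:CV}.

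The second step, which I expect to be the main obstacle, is the Lipschitz regularity of $\xi$ on all of $[-t,0]$. On any subinterval where $\xi$ stays in the open set $\Omega$ the arc is an unconstrained local minimizer, hence a classical extremal, and it has the energy first integral: $E(s):=\langle\dot\xi(s),L_v(\xi(s),\dot\xi(s))\rangle-L(\xi(s),\dot\xi(s))$ is constant there. The decisive point is that this constant is unchanged across the portions on which $\xi$ travels along $\partial\Omega$: since $\partial\Omega$ is $C^2$, the necessary conditions for the state-constrained problem produce, on such a portion, a ``reaction'' directed along the outer normal $\nu(\xi)$ to $\partial\Omega$, and, since this reaction is orthogonal to $\partial\Omega$ while $\dot\xi$ is tangent to it, differentiating $E$ along the portion yields $\dot E=0$ --- the reaction does no work. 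Combined with the Weierstrass--Erdmann corner conditions (continuity of $L_v$, and of $E$, at junctions of interior and boundary arcs), this shows $E$ is constant on $[-t,0]$; since $E(s)=H(\xi(s),L_v(\xi(s),\dot\xi(s)))$ by Legendre duality, superlinearity (H2) of $H$ in $p$ then bounds $|L_v(\xi(s),\dot\xi(s))|$, and hence $|\dot\xi(s)|$, uniformly for a.e.\ $s$, so $\xi$ is Lipschitz on $[-t,0]$. The difficulty is that state-constrained minimizers are not Lipschitz in general; it is precisely the autonomous form of $L$ and the $C^2$ regularity of $\partial\Omega$ that must be exploited, and turning the ``no work'' heuristic into a rigorous statement for a merely $W^{1,1}$ arc --- which a priori could enter and leave $\partial\Omega$ infinitely often, with the constraint multiplier only of bounded variation --- is the technical heart of the argument.

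Finally, to obtain the interior regularity I would fix $-t\le t_1<t_2\le0$ with $\xi(s)\in\Omega$ for all $s\in(t_1,t_2)$. For each compact subinterval of $(t_1,t_2)$ the arc lies at positive distance from $\partial\Omega$, so small variations supported there keep it in $\overline{\Omega}$; thus $\xi$ minimizes the unconstrained action on $(t_1,t_2)$ and satisfies the du Bois--Reymond form of the Euler--Lagrange equation: the dual arc $p(s):=L_v(\xi(s),\dot\xi(s))$ is absolutely continuous with $\dot p(s)=L_x(\xi(s),\dot\xi(s))$ a.e.\ on $(t_1,t_2)$. Since $\dot\xi\in L^\infty$ by the previous step, the right-hand side is bounded, so $p$ is Lipschitz, and inverting the Legendre transform (justified by (L1)) gives $\dot\xi(s)=H_p(\xi(s),p(s))$, whence $\xi\in C^{1,1}_{\mathrm{loc}}(t_1,t_2)$. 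Bootstrapping, $\dot\xi$ is now continuous, so $\dot p=L_x(\xi,\dot\xi)$ is continuous, $p\in C^1$, and therefore $\dot\xi=H_p(\xi,p)\in C^1$ and $\xi\in C^2(t_1,t_2)$, with further smoothness inherited from $H$. Using the identity $L_x(x,v)=-H_x(x,p)$ valid when $p=L_v(x,v)$, the relation $\dot p=L_x(\xi,\dot\xi)$ becomes $\dot p=-H_x(\xi,p)$, so $(\xi,p)$ solves the Hamiltonian system \eqref{eq:Ham_ODE} on $(t_1,t_2)$, which would complete the proof.
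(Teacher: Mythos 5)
Your Steps 1 and 3 are correct and standard: the direct method (with de la Vall\'ee--Poussin/Dunford--Pettis and Tonelli lower semicontinuity) for existence, and the du Bois--Reymond bootstrap for interior $C^2$ regularity together with the Hamiltonian form via Legendre duality. The paper itself gives no proof, citing the Cannarsa--Sinestrari book, so the relevant comparison is with the standard literature.

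The gap is in Step 2, and you yourself flag it but do not close it. Two distinct problems with the ``normal reaction does no work'' route. First, it presupposes that $\partial\Omega$ is $C^2$ (or at least $C^{1,1}$) so that an outer normal $\nu(\xi)$ and a normal measure-valued multiplier exist; but Proposition~\ref{Tonelli_straint} is stated for an arbitrary domain $\Omega$, and in Section~3 of the paper $\Omega$ is only a bounded \emph{Lipschitz} domain (the $C^2$ boundary enters only much later in Section~5). Second, even granting smooth boundary, passing from the Pontryagin necessary conditions with a BV multiplier to the pointwise constancy of $E$ across boundary arcs and an a priori uncountable set of junction times is precisely the hard part, and you acknowledge the heuristic has not been made rigorous. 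The cleaner, and in fact more general, route --- the one used in Ambrosio--Ascenzi--Buttazzo and Dal Maso--Frankowska, which the paper itself points to in the proof of its Lemma~\ref{eq:Lip_main} --- is to use \emph{inner variations} (time reparametrizations): for any increasing $C^1$ diffeomorphism $\phi:[-t,0]\to[-t,0]$ fixing the endpoints, the curve $\xi\circ\phi$ still lies in $\overline{\Omega}$, since the state constraint is a constraint on the \emph{trace} of the curve and is invariant under reparametrization. Comparing $\int L(\xi\circ\phi,(\dot\xi\circ\phi)\dot\phi)\,ds$ with the minimum and taking the first variation in $\phi$ yields the du Bois--Reymond identity $\frac{d}{ds}\big[\langle\dot\xi,L_v(\xi,\dot\xi)\rangle-L(\xi,\dot\xi)\big]=0$ in the distributional sense, hence $E$ is an a.e.\ constant along $\xi$, without any reference to $\partial\Omega$ or to normal reactions. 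Superlinearity then bounds $|\dot\xi|$ exactly as you describe. Replacing your Step 2 by this inner-variation argument would repair the proof and, incidentally, remove the spurious $C^2$ hypothesis on $\partial\Omega$.
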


\begin{defn}[Extremal curve]
	Let $\gamma:[a,b]\to \R^n$ be a curve of class $C^2$ with $a<b$. Then $\gamma$ is called an extremal curve if it satisfies the Euler-Lagrange equation
	$$
	\frac{d}{dt}\frac{\partial L}{\partial v}(\gamma,\dot{\gamma})=\frac{\partial L}{\partial x}(\gamma,\dot{\gamma}).
	$$
\end{defn}

In order to give a Lipschitz estimate for the minimizers of \eqref{eq:CV}, one key point is to obtain an upper bound for $A^{\Omega}_t(x,y)$ in terms of $|x-y|/t$. In some special case, e.g., when $\Omega$ is convex, such an upper estimate can be derived by using the geodesic segment connecting $x$ and $y$. 

\begin{defn}[$C$-quasiconvex domain]\label{x_C_reachable}
	For any fixed $x\in\overline{\Omega}$ and constant $C>0$, we say that $y\in\overline{\Omega}$ is {\em $(x,C)$-reachable (in $\Omega$)}, if there exists a curve $\gamma\in \Gamma^{0,t(x,y)}_{x,y}(\overline{\Omega})$ for some $t(x,y)>0$ with $|\dot{\gamma}|=1$ a.e. on $[0,t(x,y)]$ and $t(x,y)\leqslant C|x-y|$. The set of all $(x,C)$-reachable  points is denoted by $\mathbf{R}_C(x,\Omega)$. We say that $\Omega$ is $C$-quasiconvex, if $\mathbf{R}_C(x,\Omega)=\Omega$ for all $x\in\overline{\Omega}$.
 \end{defn}

\begin{defn}[Lipschitz domain]\label{Lip domain}
A domain $\Omega\subset\R^n$ is called a Lipschitz domain, if $\partial\Omega$ is locally Lipschitz, i.e., can be locally represented as the graph of a Lipschitz function defined on some open ball of $\R^{n-1}$.

\end{defn}

\begin{Rem}\label{quasiconvex}
	It is a fact that $\Omega$ is $1$-quasiconvex only if it is convex. It is not difficult to show that $\Omega$ is $C$-quasiconvex for some $C>0$ if $\Omega$ is a bounded Lipschitz domain (see, for instance, Sections 2.5.1 and 2.5.2 in \cite{BB}). For more on length spaces and $C$-quasiconvex domains see \cite{Gromov}.
\end{Rem}

\begin{Lem}[{\it A priori} Lipschitz estimate for minimizers]\label{eq:Lip_main}
Let $\Omega$ be a $C$-quasiconvex domain for some $C>0$.
Then there exists a nondecreasing superlinear function $\kappa:[0,+\infty)\to[0,+\infty)$ such that, for any $x,y\in\overline{\Omega}$, any $t>0$ and any minimizer $\xi\in\Gamma^{-t,0}_{x,y}(\overline{\Omega})$ of $A^{\Omega}_t(x,y)$, we have
\begin{align*}
	\operatorname*{ess\ sup}_{s\in[-t,0]}|\dot{\xi}(s)|\leqslant\kappa(C|x-y|/t).
\end{align*}
 Moreover, if $\Omega$ is bounded in addition, then we get
\begin{align*}
	\operatorname*{ess\ sup}_{s\in[-t,0]}|\dot{\xi}(s)|\leqslant\kappa(CD/t),
\end{align*}
where $D>0$ denotes the diameter of $\Omega$.
\end{Lem}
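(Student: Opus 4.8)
The plan is to derive the Lipschitz estimate from a two-step comparison argument: first obtain an upper bound for $A^{\Omega}_t(x,y)$ of the form (something superlinear in) $C|x-y|/t$ by testing with a unit-speed quasiconvex path, and then use a standard ``erasing a detour'' argument on a minimizer to convert that upper bound into a pointwise bound on $|\dot\xi|$. For the first step, since $\Omega$ is $C$-quasiconvex, for any $x,y\in\overline\Omega$ there is $\gamma\in\Gamma^{0,\ell}_{x,y}(\overline\Omega)$ with $|\dot\gamma|=1$ a.e.\ and $\ell\le C|x-y|$. Reparametrizing $\gamma$ to run on $[0,t]$ at constant speed $\ell/t\le C|x-y|/t$ gives an admissible competitor, and by superlinearity (L2), $L(\xi,\dot\xi)\le\theta_2(|\dot\xi|)$, so
\begin{equation*}
A^{\Omega}_t(x,y)\le t\,\theta_2\!\left(\frac{C|x-y|}{t}\right).
\end{equation*}
If $\Omega$ is bounded we may replace $|x-y|$ by the diameter $D$ here, but for the a priori estimate it is cleaner to keep $|x-y|$ and specialize at the end.

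For the second step, let $\xi\in\Gamma^{-t,0}_{x,y}(\overline\Omega)$ be a minimizer of $A^{\Omega}_t(x,y)$. By Proposition~\ref{Tonelli_straint} $\xi$ is Lipschitz and, on any subinterval where it stays in $\Omega$, it is a $C^2$ extremal; in particular the energy $H(\xi(s),p(s))$ is constant along such arcs. The idea is to bound this energy level. Fix $\sigma$ a point of differentiability of $\xi$ with $|\dot\xi(\sigma)|$ close to the essential sup, and compare $\xi$ with the curve obtained by removing a short time window $[\sigma-\delta,\sigma+\delta]$ around $\sigma$ and replacing it by a quasiconvex arc joining $\xi(\sigma-\delta)$ to $\xi(\sigma+\delta)$; the energy estimate then forces $|\dot\xi(\sigma)|$ to be controlled by a superlinear function of the total ``budget'' $A^{\Omega}_t(x,y)/t$, hence by $\kappa(C|x-y|/t)$ for an appropriate nondecreasing superlinear $\kappa$ built out of $\theta_1,\theta_2$ and the convexity of $L$. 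Concretely, one uses $L(\xi,\dot\xi)\ge\theta_1(|\dot\xi|)-c_0$ to lower-bound the cost on $[\sigma-\delta,\sigma+\delta]$ while upper-bounding the cost of the replacement arc by the step-one estimate applied with $|x-y|$ replaced by $|\xi(\sigma+\delta)-\xi(\sigma-\delta)|\le 2\delta\,\|\dot\xi\|_\infty$; optimality of $\xi$ gives an inequality that, after letting $\delta\to0$ and invoking the first-order optimality/Erdmann condition, closes the loop. The bounded-domain refinement follows by replacing $|x-y|$ with $D$ throughout, since any competitor constructed above has length at most $CD$.

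The main obstacle is the presence of the state constraint $\xi(\tau)\in\overline\Omega$: the classical interior argument (perturb the minimizer freely, or compare with a straight segment) is not available when $\xi$ touches $\partial\Omega$, which is exactly why the hypothesis is $C$-quasiconvexity rather than convexity. The remedy is precisely the one above — always use quasiconvex connecting arcs inside $\overline\Omega$ rather than segments — so the constant $C$ propagates into the final superlinear function $\kappa$. A secondary technical point is that the energy is only constant along arcs contained in the open set $\Omega$, so one must handle the times at which $\xi$ meets $\partial\Omega$ by a patching/limiting argument (or equivalently work directly with the a.e.\ defined velocity and the integral functional rather than with the Hamiltonian energy); neither step requires new ideas beyond standard calculus-of-variations estimates combined with Lemma~\ref{eq:Lip_main}'s quasiconvexity input and the superlinear bounds (L2).
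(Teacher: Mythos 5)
Your step one matches the paper's: test $A^\Omega_t(x,y)$ against a reparametrized quasiconvex arc and use $(\mathbf{L}2)$ to get $A^\Omega_t(x,y)\leqslant t\,\theta_2(C|x-y|/t)$. The divergence is in step two, where your ``erase-a-detour'' comparison does not close. Removing the window $[\sigma-\delta,\sigma+\delta]$ and replacing it by a quasiconvex arc joining $\xi(\sigma-\delta)$ to $\xi(\sigma+\delta)$, traversed in the same time $2\delta$, gives on one side a cost at least $\int_{\sigma-\delta}^{\sigma+\delta}\big(\theta_1(|\dot\xi|)-c_0\big)\,ds$, and on the other a cost at most $2\delta\,\theta_2\!\big(C|\xi(\sigma+\delta)-\xi(\sigma-\delta)|/(2\delta)\big)$. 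Letting $\delta\to0$ at a Lebesgue point $\sigma$ produces $\theta_1(|\dot\xi(\sigma)|)-c_0\leqslant\theta_2(C|\dot\xi(\sigma)|)$, which is vacuous: it does not bound $|\dot\xi(\sigma)|$ at all, since the replacement arc is just as fast (up to the constant $C\geqslant 1$) as the piece you removed. Choosing $\sigma$ near the essential supremum of $|\dot\xi|$ is exactly the wrong choice for a comparison argument of this kind.

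The paper's proof takes the opposite tack: from the upper bound on $A^\Omega_t(x,y)$ and the Fenchel inequality $\theta_1(r)\geqslant r-\theta_1^*(1)$ it derives a bound on $\int_{-t}^0|\dot\xi|\,ds$, hence on $\operatorname*{ess\ inf}_{[-t,0]}|\dot\xi|$ — that is, it locates a time $s_0$ where the speed is \emph{small} — and then defers to the cited references (Ambrosio--Ascenzi--Buttazzo, Dal Maso--Frankowska, or \cite[Prop.~A.1]{Cannarsa-Cheng3}) to propagate the bound from $s_0$ to all of $[-t,0]$. The propagation step is the Erdmann first-integral: since bi-Lipschitz time reparametrizations fixing the endpoints preserve the state constraint $\xi(\tau)\in\overline\Omega$, the quantity $\langle\dot\xi,L_v(\xi,\dot\xi)\rangle-L(\xi,\dot\xi)=H(\xi,L_v(\xi,\dot\xi))$ is constant a.e.\ along any minimizer of the constrained problem, and at the slow time $s_0$ this energy level is controlled by $\kappa_0(C|x-y|/t)$ for some nondecreasing superlinear $\kappa_0$; superlinearity of $H$ in $p$ then bounds $|L_v(\xi(s),\dot\xi(s))|$, hence $|\dot\xi(s)|$, uniformly. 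You correctly flag that the energy argument is the crux and that the state constraint obstructs the naive interior Euler--Lagrange reasoning, but your detour replacement neither proves the Erdmann relation nor circumvents it — it just produces the useless self-referential inequality above. What is missing in your proposal is precisely the reparametrization derivation of energy conservation under the constraint, combined with the fact that the right anchor point is one of small speed (ess inf), not of large speed. The bounded-domain refinement is then immediate from monotonicity of $\kappa$ and $|x-y|\leqslant D$, which both you and the paper treat identically.
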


\begin{proof}
Fix $x,y\in\overline{\Omega}$ and $t>0$. Since $\Omega$ is $C$-quasiconvex, there is a curve $\gamma\in\Gamma^{0,t(x,y)}_{x,y}(\overline{\Omega})$ satisfying with $|\dot{\gamma}|=1$ a.e. on $[0,t(x,y)]$ and $t(x,y)\leqslant C|x-y|$.
	Define a curve $\eta\in\Gamma^{-t,0}_{y,x}(\overline{\Omega})$ by
	$$
	\eta(s)=\gamma\left(\frac{t(x,y)}t(s+t)\right),\quad s\in[-t,0].
	$$
	Then
	\begin{equation}\label{eq:upper_bound1}
	A^{\Omega}_t(y,x)\leqslant\int^0_{-t}L(\eta(s),\dot{\eta}(s))\ ds\leqslant t\theta_2\left(\frac{t(x,y)}t\right)\leqslant t\theta_2\left(\frac{C|x-y|}t\right).
	\end{equation}
	On the other hand, we have 
	\begin{equation}\label{eq:lower_bound1}
	\begin{split}
		A^{\Omega}_t(x,y)\geqslant&\,\int^0_{-t}\theta_1(|\dot{\xi}(s)|)\ ds-c_0t\geqslant\int^0_{-t}|\dot{\xi}(s)|\ ds-(\theta_1^*(1)+c_0)t.
	\end{split}
	\end{equation}
	The combination of \eqref{eq:upper_bound1} and \eqref{eq:lower_bound1} leads to
	$$
	\int^0_{-t}|\dot{\xi}(s)|\ ds\leqslant t\left\{\theta_2\left(\frac{C|x-y|}t\right)+\theta_1^*(1)+c_0\right\},
	$$
	which implies that 
	$$
	|\xi(s)-x|\leqslant\int^0_{-t}|\dot{\xi}(s)|\ ds\leqslant t\left\{\theta_2\left(\frac{C|x-y|}t\right)+\theta_1^*(1)+c_0\right\},
	$$
	and
	$$
	\operatorname*{ess\ inf}_{s\in[0,t]}|\dot{\xi}(s)|\leqslant\theta_2\left(\frac{C|x-y|}t\right)+\theta_1^*(1)+c_0.
	$$
	The rest of the proof is standard, see, for instance, \cite{Ambrosio-Ascenzi-Buttazzo}, \cite{Dal-Maso-Frankowska} or Proposition A.1 in \cite{Cannarsa-Cheng3}.
\end{proof}

\begin{Pro}[Regularity properties of relative fundamental solutions]\label{C11_A_t}For the regularity of $A^{\Omega}_t(x,y)$, we have
\begin{enumerate}[\rm (a)]
  \item for any $x\in\Omega$ and any $t>0$, $y\mapsto A^{\Omega}_t(x,y)$ is locally semiconcave in $\Omega$;
  \item for any $x\in\Omega$ and any $\lambda>0$, there exists $t_\lambda>0$ such that, for any $0<t\leqslant\min\{t_{\lambda},d_{\partial\Omega}(x)/\kappa(\lambda)\}$, the function $y\mapsto A^{\Omega}_t(x,y)$ is  uniformly convex on $B(x,\lambda t)\subset\Omega$ with a constant $C(\lambda)/t$, where $\kappa$ is the function obtained in Lemma \ref{eq:Lip_main};
  \item for any $x\in\Omega$, the functions $y\mapsto A^{\Omega}_t(x,y)$ and $y\mapsto A^{\Omega}_t(y,x)$ are of class $C^{1,1}_{\text{loc}}(B(x,\lambda t))$ if $0<t\leqslant\min\{t_{\lambda},d_{\partial\Omega}(x)/\kappa(\lambda)\}$. Moreover, for all $y\in B(x,\lambda t)$, we have
\begin{equation}\label{eq:diff_A_t}
	D_yA^{\Omega}_t(x,y)=\frac{\partial L}{\partial v}(\xi(t),\dot{\xi}(t)),\quad 
		D_xA^{\Omega}_t(x,y)=-\frac{\partial L}{\partial v}(\xi(0),\dot{\xi}(0)),
\end{equation}
where $\xi\in\Gamma^{0,t}_{x,y}(\overline{B(x,\kappa(\lambda)t)})$ is the unique minimizer for $A^{\Omega}_t(x,y)$.
\end{enumerate}
\end{Pro}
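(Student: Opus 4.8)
The plan is to handle the three assertions by increasingly local arguments, reducing (b) and (c) to statements about the \emph{unconstrained} fundamental solution $A_t:=A^{\R^n}_t$, which I would prove directly by short‑time variational estimates; the only genuinely new point is that near an interior endpoint all the relevant variational perturbations keep the competing arc inside $\Omega$. For part (a), fix $x\in\Omega$, $t>0$ and $y_0\in\Omega$, and prove semiconcavity of $y\mapsto A^\Omega_t(x,y)$ on a ball $B(y_0,r)$ with $\overline{B(y_0,5r)}\subset\Omega$. Given $\bar y\in B(y_0,r)$, take a minimizer $\xi\in\Gamma^{0,t}_{x,\bar y}(\overline\Omega)$ of $A^\Omega_t(x,\bar y)$; by Proposition~\ref{Tonelli_straint} it is Lipschitz, and by Lemma~\ref{eq:Lip_main} its Lipschitz constant $M$ is bounded independently of $\bar y\in B(y_0,r)$, so choosing $\delta\in(0,\min\{t,r/M\})$ one has $\xi(s)\in B(y_0,2r)\subset\Omega$ on $[t-\delta,t]$, where $\xi$ is $C^2$ and solves the Euler--Lagrange equation. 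For $y\in B(y_0,r)$ set $\xi_y=\xi$ on $[0,t-\delta]$ and $\xi_y(s)=\xi(s)+\phi\big(\tfrac{s-t+\delta}{\delta}\big)(y-\bar y)$ on $[t-\delta,t]$ with $\phi\in C^\infty([0,1])$, $\phi(0)=0$, $\phi(1)=1$. Then $\xi_y\in\Gamma^{0,t}_{x,y}(\overline\Omega)$, so $A^\Omega_t(x,y)\leqslant\int_0^tL(\xi_y,\dot\xi_y)$; a second‑order expansion of $L$ along $\xi_y$, followed by an integration by parts of the term containing $\frac{\partial L}{\partial v}$ in which the Euler--Lagrange equation makes it cancel the term containing $\frac{\partial L}{\partial x}$, leaves
\[
A^\Omega_t(x,y)\leqslant A^\Omega_t(x,\bar y)+\Big\langle\tfrac{\partial L}{\partial v}(\xi(t),\dot\xi(t)),\,y-\bar y\Big\rangle+C\,|y-\bar y|^2 ,
\]
with $C$ controlled by $\delta$, $M$ and the $C^2$‑norm of $L$ on a fixed compact set, uniformly in $\bar y\in B(y_0,r)$; since $y_0$ is arbitrary this gives (a).

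For (b)--(c), fix $x\in\Omega$ and $\lambda>0$. Applying Lemma~\ref{eq:Lip_main} with $\Omega=\R^n$ (which is $1$-quasiconvex), any minimizer of $A_t(x,y)$ with $y\in B(x,\lambda t)$ has speed $\leqslant\kappa(\lambda)$, hence stays in $\overline{B(x,\kappa(\lambda)t)}$; since $t\leqslant d_{\partial\Omega}(x)/\kappa(\lambda)$ forces $\overline{B(x,\kappa(\lambda)t)}\subseteq\overline\Omega$, such a minimizer lies in $\Gamma^{0,t}_{x,y}(\overline\Omega)$, so $A^\Omega_t(x,y)=A_t(x,y)$ on $B(x,\lambda t)$ and every minimizer there takes values in $B(x,\kappa(\lambda)t)\subset\Omega$, in particular is interior and $C^2$. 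It therefore suffices to prove (b)--(c) for $A_t$. For (b) I would use the midpoint trick: for $y_1,y_2\in B(x,\lambda t)$ with minimizers $\xi_1,\xi_2$, the arc $\eta=\tfrac12(\xi_1+\xi_2)$ joins $x$ to $\tfrac12(y_1+y_2)$, and uniform convexity of $v\mapsto L(\cdot,v)$ — with a lower bound $\nu(\lambda)>0$ on $\frac{\partial^2 L}{\partial v^2}$ over the relevant compact set — together with a second‑order expansion in $x$ and the inequality $\int_0^t|\xi_1-\xi_2|^2\leqslant t^2\int_0^t|\dot\xi_1-\dot\xi_2|^2$ (valid since $\xi_1(0)=\xi_2(0)=x$) yields, once $t\leqslant t_\lambda$ is small enough that the $x$‑error is absorbed by $-\tfrac{\nu(\lambda)}{8}\int_0^t|\dot\xi_1-\dot\xi_2|^2$,
\[
A_t\big(x,\tfrac{y_1+y_2}{2}\big)\leqslant\tfrac12 A_t(x,y_1)+\tfrac12 A_t(x,y_2)-\tfrac{C(\lambda)}{t}\,\big|\tfrac{y_1-y_2}{2}\big|^2 ,
\]
using also $|y_1-y_2|^2\leqslant t\int_0^t|\dot\xi_1-\dot\xi_2|^2$; here $t_\lambda$ and $C(\lambda)$ depend only on $\lambda$, $L$ and $\Omega$. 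This is the uniform convexity in (b).

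Finally, on $B(x,\lambda t)$ the function $A^\Omega_t(x,\cdot)=A_t(x,\cdot)$ is semiconcave by (a) and semiconvex by (b), hence $C^{1,1}_{\mathrm{loc}}$; at any point its gradient is the unique element of the superdifferential, which by the construction in (a) contains $\frac{\partial L}{\partial v}(\xi(t),\dot\xi(t))$ for every minimizer $\xi$, yielding the first formula in \eqref{eq:diff_A_t} and forcing uniqueness of $\xi$ (equal terminal momentum $\Rightarrow$ equal $\dot\xi(t)$ $\Rightarrow$ equal arc, by uniqueness for the Hamiltonian system). The regularity of $y\mapsto A^\Omega_t(y,x)$ and the formula for $D_xA^\Omega_t(x,y)$ then follow by applying the same analysis to the reversed Tonelli Lagrangian $\widetilde L(x,v)=L(x,-v)$, whose fundamental solution satisfies $A^{\widetilde L}_t(x,y)=A_t(y,x)$, so that $D_xA^\Omega_t(x,y)=-\frac{\partial L}{\partial v}(\xi(0),\dot\xi(0))$.

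I expect the main obstacle to be the quantitative part of (b): choosing $t_\lambda$ so that the $O(|\xi_1-\xi_2|^2)$ error coming from the $x$‑dependence of $L$ is genuinely dominated by the good term $-\tfrac{\nu(\lambda)}{8}\int_0^t|\dot\xi_1-\dot\xi_2|^2$, while keeping every constant ($\nu(\lambda)$, the Lipschitz bounds of $\frac{\partial L}{\partial x}$, $\kappa(\lambda)$, and the size of the compact set on which these are taken) depending only on $\lambda$; and, throughout, making sure the minimizers in play do not reach $\partial B(x,\kappa(\lambda)t)$, so that their interior $C^2$‑regularity and the attendant bounds are available at every step.
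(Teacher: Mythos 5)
Your proposal is correct and, for parts (b) and (c), takes essentially the same route as the paper: choose $t$ small enough (relative to $d_{\partial\Omega}(x)/\kappa(\lambda)$) that any unconstrained minimizer from $x$ to $y\in B(x,\lambda t)$ stays in $\overline{B(x,\kappa(\lambda)t)}\subset\overline\Omega$, so $A^\Omega_t(x,y)=A_t(x,y)$ there, then invoke the unconstrained regularity theory. The paper delegates the unconstrained statements to \cite[Appendix A]{Cannarsa-Cheng3}, while you reconstruct them (midpoint trick for uniform convexity, semiconcave plus semiconvex gives $C^{1,1}$, equal terminal momenta give uniqueness, reversed Lagrangian for the $x$-derivative); this is the same mathematics, written out.

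The genuine difference is in part (a). The paper's sketch only establishes $A^\Omega_t(x,\cdot)=A_t(x,\cdot)$ on $B(x,\lambda t)$ for $t\leqslant t_\lambda$, which by itself does not cover the statement as written (local semiconcavity of $y\mapsto A^\Omega_t(x,y)$ at \emph{every} $y\in\Omega$, for \emph{every} $t>0$ — for $y$ far from $x$ or $t$ large the constrained minimizer may graze $\partial\Omega$ and the identity with $A_t$ can fail). Your direct proof of (a) handles this correctly: you perturb only near the endpoint, where the interior endpoint $\bar y$ together with the uniform Lipschitz bound of Lemma~\ref{eq:Lip_main} guarantees that the terminal segment of the minimizer and all comparison arcs stay in $\Omega$, so the Euler--Lagrange equation is available there and the second-order expansion closes. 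This makes your version a more honest argument for (a) than the sketch in the paper, at the cost of a longer write-up, and it is the kind of localization-at-the-endpoint one needs when the state constraint cannot be removed globally. No gaps; the one point worth flagging explicitly in a final write-up is the remark you implicitly use at the end of (c), namely that once $A^\Omega_t=A_t$ holds on $B(x,\lambda t)$, every constrained minimizer is automatically an unconstrained one and therefore inherits its uniqueness and confinement to $\overline{B(x,\kappa(\lambda)t)}$.
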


\begin{proof}[Sketch of proof]
	Let $x\in\Omega$, $R=d_{\partial\Omega}(x)$ and let $\kappa$ be the function given by Lemma \ref{eq:Lip_main} (relative to $B(x,R)$ which is 1-quasiconvex). Then, for any $\lambda>0$ let $t_{\lambda}>0$ be such that $\max\{\lambda,\kappa(\lambda)\}\leqslant \frac {R}{t_{\lambda}}$. By Lemma \ref{eq:Lip_main}, for any $t\in(0,t_{\lambda}]$, any $y\in B(x,\lambda t)$ and any $\xi\in\Gamma^{0,t}_{x,y}(\overline{\Omega})$ minimizing $A^{\Omega}_t(x,y)$, we have
	\begin{align*}
		\xi(s)\in B(x,R)\subset\Omega,\quad s\in[0,t].
	\end{align*}
	Therefore, 
	$$
	A^{\Omega}_t(x,y)=A_t(x,y),\quad \forall t\in(0,t_{\lambda}],\ \forall y\in B(x,\lambda t),
	$$
	where $A_t(x,y)$ denotes the relative fundamental solution for $\Omega=\R^n$.
	Then, all the statements follow from the regularity results in \cite[Appendix A]{Cannarsa-Cheng3}.
\end{proof}

\begin{defn}[Relative Ma\~n\'e's potential and critical value]
	The function $\Phi^{\Omega}_L:\overline{\Omega}\times\overline{\Omega}\to\R\cup\{-\infty\}$ defined as
\begin{equation}
	\Phi^{\Omega}_L(x,y)=\inf_{t>0}A_t^{\Omega}(x,y),\quad \forall x,y\in\overline{\Omega},
\end{equation}
is called {\em Ma\~n\'e's potential} associated with $L$  relative to $\overline{\Omega}$ or {\em relative Ma\~n\'e's potential} for short. We call the value defined by
\begin{equation}\label{eq:mane}
	c_{\Omega}(L)=-\inf_{t>0,x\in\overline{\Omega}}\frac 1tA^{\Omega}_t(x,x),
\end{equation}
{\em Ma\~n\'e's critical value} of $L$ relative to $\overline{\Omega}$ or {\em relative Ma\~n\'e's critical value} for short.
\end{defn}

We collect some elementary properties of the relative Ma\~n\'e's potential  and relative Ma\~n\'e's critical value here. See Appendix~\ref{sse:a2} for the proofs of Lemma \ref{poten_proper} and Lemma \ref{eq:equiv_Phi_0}.

\begin{Lem}\label{poten_proper}
The relative Ma\~n\'e's potential $\Phi^{\Omega}_L$ has the following properties:
\begin{enumerate}[\rm (1)]
	\item $\Phi^{\Omega}_L(x,z)\leqslant \Phi^{\Omega}_L(x,y)+\Phi^{\Omega}_L(y,z)$ for all $x$, $y$, $z\in\overline{\Omega}$;
	\item if $c_{\Omega}(L)\leqslant 0$, then $\Phi^{\Omega}_L(x,x)=0$ for all $x\in\overline{\Omega}$;
	\item if $c_{\Omega}(L)\leqslant 0$ and $\Omega$ is $C$-quasiconvex for some constant $C>0$, then
	\begin{enumerate}[\rm (i)]
	 \item $
	|\Phi^{\Omega}_L(x,y)|\leqslant \theta_2(1)C|x-y|,\quad\forall x,y\in\overline{\Omega};
	$
	 \item $\Phi^{\Omega}_L(\cdot,\cdot)$ is Lipschitz with a Lipschitz constant $\theta_2(1)C$.
	 \end{enumerate}	
\end{enumerate}	
\end{Lem}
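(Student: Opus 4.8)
The three statements are of increasing difficulty and each builds on the previous ones, so I would attack them in order. For property (1), I would take any $t_1,t_2>0$, pick near-minimizers $\xi_1\in\Gamma^{0,t_1}_{x,y}(\overline{\Omega})$ for $A^{\Omega}_{t_1}(x,y)$ and $\xi_2\in\Gamma^{0,t_2}_{y,z}(\overline{\Omega})$ for $A^{\Omega}_{t_2}(y,z)$, and concatenate them into an arc $\xi\in\Gamma^{0,t_1+t_2}_{x,z}(\overline{\Omega})$; this stays in $\overline{\Omega}$ because both pieces do, so $A^{\Omega}_{t_1+t_2}(x,z)\leqslant A^{\Omega}_{t_1}(x,y)+A^{\Omega}_{t_2}(y,z)$. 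Taking the infimum over $t_1,t_2$ and using $\Phi^{\Omega}_L(x,z)\leqslant\inf_{t>0}A^{\Omega}_t(x,z)$ gives the triangle-type inequality. This is the standard dynamic-programming concatenation argument and should be essentially immediate.

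For property (2), the hypothesis $c_{\Omega}(L)\leqslant 0$ says exactly that $A^{\Omega}_t(x,x)\geqslant 0$ for all $t>0$ and all $x\in\overline{\Omega}$ (since $-\inf_{t,x}\frac1t A^{\Omega}_t(x,x)\leqslant 0$ forces $\inf_{t,x}\frac1tA^{\Omega}_t(x,x)\geqslant 0$), hence $\Phi^{\Omega}_L(x,x)=\inf_{t>0}A^{\Omega}_t(x,x)\geqslant 0$. For the reverse inequality I would use the constant curve $\xi\equiv x$ on $[0,t]$, which lies in $\overline{\Omega}$ and has cost $tL(x,0)$; therefore $A^{\Omega}_t(x,x)\leqslant tL(x,0)$, and letting $t\to 0^+$ yields $\Phi^{\Omega}_L(x,x)\leqslant 0$. (One should note $L(x,0)<\infty$ by continuity of $L$, so the bound is finite; there is no issue with $\Phi^{\Omega}_L(x,x)=-\infty$.) Combining the two bounds gives $\Phi^{\Omega}_L(x,x)=0$.

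For property (3), the key is the $C$-quasiconvexity estimate already packaged in \eqref{eq:upper_bound1} from the proof of Lemma \ref{eq:Lip_main}: for any $x,y\in\overline{\Omega}$ and any $t>0$ one has $A^{\Omega}_t(x,y)\leqslant t\,\theta_2(C|x-y|/t)$. Choosing $t=C|x-y|$ (assuming $x\neq y$; the case $x=y$ is covered by (2)) gives $A^{\Omega}_{C|x-y|}(x,y)\leqslant C|x-y|\,\theta_2(1)$, hence $\Phi^{\Omega}_L(x,y)\leqslant\theta_2(1)C|x-y|$, which is the upper bound in (i). The lower bound $\Phi^{\Omega}_L(x,y)\geqslant -\theta_2(1)C|y-x|$ follows from the triangle inequality (1) together with (2): $0=\Phi^{\Omega}_L(x,x)\leqslant\Phi^{\Omega}_L(x,y)+\Phi^{\Omega}_L(y,x)\leqslant\Phi^{\Omega}_L(x,y)+\theta_2(1)C|y-x|$, so $\Phi^{\Omega}_L(x,y)\geqslant-\theta_2(1)C|x-y|$; taking absolute values proves (i). Finally, for the Lipschitz bound (ii), I would apply (1) to interpolate: for $x,x',y\in\overline{\Omega}$, $\Phi^{\Omega}_L(x,y)\leqslant\Phi^{\Omega}_L(x,x')+\Phi^{\Omega}_L(x',y)$, so $\Phi^{\Omega}_L(x,y)-\Phi^{\Omega}_L(x',y)\leqslant\Phi^{\Omega}_L(x,x')\leqslant\theta_2(1)C|x-x'|$ by (i), and symmetrically; the same argument in the second variable controls $\Phi^{\Omega}_L(x,y)-\Phi^{\Omega}_L(x,y')$. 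Hence $\Phi^{\Omega}_L$ is Lipschitz with constant $\theta_2(1)C$ (jointly, using the max or sum of the two one-variable estimates, up to the usual constant bookkeeping).

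The only genuine subtlety — the ``main obstacle,'' such as it is — is making sure the infimum defining $\Phi^{\Omega}_L$ is not $-\infty$, which is where hypothesis \eqref{eq:mane} (i.e. $c_\Omega(L)\leqslant 0$) and the superlinear lower bound in ({\bf L2}) are genuinely needed: without $c_\Omega(L)\leqslant 0$ one could have $A^{\Omega}_t(x,x)\to-\infty$ and the potential would be degenerate. Once finiteness is secured via (2), everything else is a routine combination of concatenation, the constant-curve test function, and the a priori upper estimate \eqref{eq:upper_bound1}; I would keep an eye only on the constant in (ii) and state it as $\theta_2(1)C$ consistently with how Lipschitz constants are normalized elsewhere in the paper.
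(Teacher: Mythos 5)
Your proposal is correct and follows essentially the same route as the paper: concatenation for the triangle inequality, the constant curve for $\Phi^{\Omega}_L(x,x)\leqslant 0$, and the quasiconvex unit-speed curve (equivalently \eqref{eq:upper_bound1} with $t=C|x-y|$) combined with the triangle inequality for (3). The only cosmetic difference is in (2), where you deduce $A^{\Omega}_t(x,x)\geqslant 0$ directly from $c_\Omega(L)\leqslant 0$ rather than arguing by contradiction as the paper does; both are equivalent.
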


\begin{Lem}\label{eq:equiv_Phi_0}
The following statements are equivalent:
\begin{enumerate}[\rm (1)]
	\item $\Phi^{\Omega}_L(x,y)>-\infty$ for any $x,y\in\overline{\Omega}$;
	\item there exist $x,y\in\overline{\Omega}$ such that $\Phi^{\Omega}_L(x,y)>-\infty$;
	\item $c_{\Omega}(L)\leqslant 0$.
\end{enumerate}	
\end{Lem}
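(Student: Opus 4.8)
The plan is to establish the chain of implications $(1)\Rightarrow(2)\Rightarrow(3)\Rightarrow(1)$, of which only the last is substantive. The implication $(1)\Rightarrow(2)$ is trivial (any pair of points witnesses the existential statement, using that $\overline\Omega\neq\emptyset$).

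For $(2)\Rightarrow(3)$, I would argue by contraposition. Suppose $c_\Omega(L)>0$; then by definition of the relative critical value there exist $\tau>0$ and $z\in\overline\Omega$ with $\frac1\tau A^\Omega_\tau(z,z)<-c_\Omega(L)/2<0$, i.e. there is a loop at $z$ of some positive length with strictly negative action, say $A^\Omega_\tau(z,z)=-a$ with $a>0$. The key point is then to connect an arbitrary pair $x,y\in\overline\Omega$ to this loop: pick any admissible arc from $x$ to $z$ in $\overline\Omega$ (which exists since $\Omega$ is, say, $C$-quasiconvex — though for the bare equivalence one only needs connectedness of $\overline\Omega$ plus finiteness of $A^\Omega_{t_0}(x,z)$ for \emph{some} $t_0$, which follows from superlinearity bounds on any fixed admissible arc) of action $b_1$, traverse the negative loop at $z$ a total of $k$ times, and then go from $z$ to $y$ with action $b_2$. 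Concatenating gives $A^\Omega_{t_0+k\tau+s_0}(x,y)\leqslant b_1+b_2-ka\to-\infty$ as $k\to\infty$, so $\Phi^\Omega_L(x,y)=-\infty$. Hence $\Phi^\Omega_L$ is $-\infty$ everywhere, contradicting (2); this proves $(2)\Rightarrow(3)$.

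For $(3)\Rightarrow(1)$, assume $c_\Omega(L)\leqslant 0$. This means $A^\Omega_t(z,z)\geqslant 0$ for all $t>0$ and $z\in\overline\Omega$ — no loop has negative action. I must show $\Phi^\Omega_L(x,y)$ is bounded below. Given any competitor $\xi\in\Gamma^{0,t}_{x,y}(\overline\Omega)$, the idea is to close it up into a loop at $x$: append a fixed admissible arc $\eta$ from $y$ back to $x$ of some duration $s_0$ and finite action $M=M(x,y)$ (again available because $\overline\Omega$ is path-connected via Lipschitz arcs, and the action of any one such arc is finite by ({\bf L2})). Then
\[
0\leqslant A^\Omega_{t+s_0}(x,x)\leqslant \int_0^t L(\xi,\dot\xi)\,ds + M,
\]
so $\int_0^t L(\xi,\dot\xi)\,ds\geqslant -M$ for every competitor, whence $A^\Omega_t(x,y)\geqslant -M$ for all $t>0$ and thus $\Phi^\Omega_L(x,y)\geqslant -M>-\infty$. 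Since $x,y$ were arbitrary, (1) holds.

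The main obstacle, such as it is, is purely bookkeeping: one must make sure that for every pair $x,y\in\overline\Omega$ there exists at least one admissible arc in $\Gamma^{s,t}_{x,y}(\overline\Omega)$ with finite Lagrangian action, and that time-reparametrizations and concatenations of such arcs remain in $W^{1,1}$ and admissible. Both are routine given that $\Omega$ is a bounded Lipschitz (hence $C$-quasiconvex, by Remark \ref{quasiconvex}) domain: the $C$-quasiconvexity furnishes a unit-speed arc of bounded length between any two points of $\overline\Omega$, whose action is controlled by $\theta_2(1)$ times its length via ({\bf L2}). No delicate estimate is needed — the superlinear lower bound in ({\bf L2}) is used only qualitatively, to know that finite-action arcs exist and that the infimum over loops being nonnegative forces a lower bound on all the $A^\Omega_t(x,y)$.
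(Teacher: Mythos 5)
Your proof is correct and rests on the same underlying mechanism as the paper's (negative loops at a point, and concatenating arcs to close competitors into loops), but the organization differs in a way worth noting. The paper proves the equivalences $(1)\Leftrightarrow(2)$ and $(2)\Leftrightarrow(3)$ entirely by contraposition/contradiction: in each step it extracts a closed curve of negative action, traverses it repeatedly, and concludes that $\Phi^\Omega_L$ is $-\infty$ everywhere, yielding the desired contradiction. You instead close the cycle $(1)\Rightarrow(2)\Rightarrow(3)\Rightarrow(1)$, and your $(3)\Rightarrow(1)$ is a \emph{direct} argument: assuming $c_\Omega(L)\leqslant 0$ (so no loop has negative action), you concatenate any competitor $\xi\in\Gamma^{0,t}_{x,y}(\overline\Omega)$ with a fixed return arc $\eta$ of finite action $M(x,y)$ to produce a loop at $x$, and the nonnegativity $A^\Omega_{t+s_0}(x,x)\geqslant 0$ then forces $\int_0^t L(\xi,\dot\xi)\,ds\geqslant -M$, hence $\Phi^\Omega_L(x,y)\geqslant -M$. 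This route buys you an explicit lower bound on $\Phi^\Omega_L(x,y)$ rather than a bare nondegeneracy statement, and in fact it is essentially the content of Lemma \ref{poten_proper}(3) in the paper (there stated under $C$-quasiconvexity, giving $|\Phi^\Omega_L(x,y)|\leqslant\theta_2(1)C|x-y|$), so your two steps $(3)\Rightarrow(1)$ and Lemma \ref{poten_proper}(3) are cut from the same cloth. The only minor point to tidy: when you write $A^\Omega_\tau(z,z)=-a$, the infimum need not be attained a priori; either invoke Proposition \ref{Tonelli_straint} for existence of a minimizing loop or simply take a loop of action $<-a/2$, which suffices.
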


\subsection{Semiconcave functions}

Let $S$ be a nonempty subset of $\R^n$.

\begin{defn}[Semiconcave functions]\label{defn_semiconcave}
We recall that a function $u:S\to\R$ is said to be {\em semiconcave} (with linear modulus) if there exists a constant $C>0$ such that
\begin{equation}\label{eq:SCC}
\lambda u(x)+(1-\lambda)u(y)-u(\lambda x+(1-\lambda)y)\leqslant\frac C2\lambda(1-\lambda)|x-y|^2
\end{equation}
for any $x,y\in S$, such that the segment $[x,y]$ is contained in $S$ and any $\lambda\in[0,1]$.  Any constant $C$ that satisfies the above inequality  is called a {\em constant of semiconcavity} for $u$ in $S$. A function $u:S\to\R$ is said to be {\em semiconvex} if $-u$ is semiconcave. 
\end{defn}

If $S$ is a convex subset of $\R^n$, then $u$ is semiconcave with constant $C$ if \eqref{eq:SCC} holds for $x,y\in S$ and all $\lambda\in[0,1]$. Let $S$ be an open subset of $\R^n$. A function $u:S\rightarrow\R$ is said to be {\em locally semiconcave} (resp. {\em locally semiconvex}) if for each $x\in S$ there exists an open ball $B(x,r)\subset S$ such that $u$ is a semiconcave (resp. semiconvex) function on $B(x,r)$. We say $u:\overline{S}\to\R$ is semiconcave up to the boundary if $u$ is a semiconcave function on $\overline{S}$ as in the Definition \ref{defn_semiconcave}.

The following result shows that in order to prove a given function is semiconcave with linear modulus, it is sufficient to show \eqref{eq:SCC} for the midpoint of any segment. One can find the proof in \cite{Cannarsa-Sinestrari}.
\begin{Pro}
Let $u:S\to\R$ be continuous. Then $u$ is semiconcave with constant $C$ if   
$$
u(x)+u(y)-2u\left(\frac{x+y}2\right)\leqslant \frac C2|x-y|^2
$$
for any $x,y$ such that the segment $[x,y]$ is contained in $S$.
\end{Pro}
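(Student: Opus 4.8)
The plan is to reduce the assertion to the classical fact that a continuous midpoint-concave function of a real variable is concave. Fix $x,y\in S$ with $[x,y]\subset S$ and set $\phi:=u-\tfrac{C}{2}|\cdot|^2$. From the elementary identity
$$
\lambda|p|^2+(1-\lambda)|q|^2-|\lambda p+(1-\lambda)q|^2=\lambda(1-\lambda)|p-q|^2,\qquad p,q\in\R^n,\ \lambda\in[0,1],
$$
one sees that the hypothesis $u(p)+u(q)-2u(\tfrac{p+q}{2})\leqslant\tfrac{C}{2}|p-q|^2$ is equivalent to the midpoint-concavity inequality $\phi(\tfrac{p+q}{2})\geqslant\tfrac12\big(\phi(p)+\phi(q)\big)$, while the desired conclusion \eqref{eq:SCC} is equivalent to $\phi(\lambda p+(1-\lambda)q)\geqslant\lambda\phi(p)+(1-\lambda)\phi(q)$. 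Thus it suffices to prove that the continuous function $\phi$ satisfies this concavity inequality whenever $[p,q]\subset S$.

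Next I would localize to the segment $[x,y]$ by introducing $h(t):=\phi\big((1-t)x+ty\big)$ for $t\in[0,1]$. Because $t\mapsto(1-t)x+ty$ is affine with image contained in $[x,y]\subset S$, the function $h$ is continuous on $[0,1]$; moreover the midpoint inequality for $\phi$, applied to pairs of points of $[x,y]$ (whose midpoints again belong to $[x,y]$), yields $h(\tfrac{t_1+t_2}{2})\geqslant\tfrac12\big(h(t_1)+h(t_2)\big)$ for all $t_1,t_2\in[0,1]$. Note that a possible lack of convexity of $S$ is irrelevant here, since every midpoint invoked lies on the fixed segment $[x,y]$.

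It then remains to show that a continuous, midpoint-concave function $h$ on $[0,1]$ is concave. I would first prove by induction on $k$ that $h(\tfrac{j}{2^k})\geqslant(1-\tfrac{j}{2^k})h(0)+\tfrac{j}{2^k}h(1)$ for all integers $0\leqslant j\leqslant 2^k$: the case $k=1$ is the midpoint hypothesis, and in the inductive step the case of even $j$ is immediate, while for odd $j$ one writes $\tfrac{j}{2^{k+1}}$ as the midpoint of $\tfrac{j-1}{2^{k+1}}$ and $\tfrac{j+1}{2^{k+1}}$ — both dyadic of level $k$ — and combines the midpoint inequality with the inductive hypothesis. Since the dyadic rationals are dense in $[0,1]$ and $h$ is continuous, passing to the limit gives $h(t)\geqslant(1-t)h(0)+th(1)$ for every $t\in[0,1]$. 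Applying this to the restriction of $h$ to an arbitrary subinterval $[a,b]\subset[0,1]$ (which is again continuous and midpoint-concave) yields the full concavity of $h$, hence the concavity of $\phi$ along $[x,y]$, and, undoing the substitution, inequality \eqref{eq:SCC} for the points $x,y$ with the stated constant $C$. As $x,y$ were arbitrary with $[x,y]\subset S$, this shows that $u$ is semiconcave with constant $C$.

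The argument is entirely elementary; the only step requiring minor care is the dyadic induction together with the observation that continuity is precisely what lets one pass from dyadic $\lambda$ to all $\lambda\in[0,1]$. There is no genuine analytic obstacle — indeed a detailed proof can also be found in \cite{Cannarsa-Sinestrari}.
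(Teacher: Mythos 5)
Your strategy is correct and matches the classical argument behind the cited result in Cannarsa--Sinestrari: subtract a quadratic and reduce to the fact that a continuous, midpoint-concave function along a segment is concave. But the pivotal ``equivalence'' you assert at the outset is off by a factor of two. With $\phi := u - \tfrac{C}{2}|\cdot|^2$, midpoint concavity $\phi(\tfrac{p+q}{2}) \geqslant \tfrac12(\phi(p)+\phi(q))$ unwinds to
$$
u(p)+u(q)-2u\big(\tfrac{p+q}{2}\big)\leqslant C\Big(\tfrac12(|p|^2+|q|^2)-\big|\tfrac{p+q}{2}\big|^2\Big)=\tfrac{C}{4}|p-q|^2,
$$
which is \emph{strictly stronger} than the assumed bound $\tfrac{C}{2}|p-q|^2$. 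So the stated hypothesis does not imply midpoint concavity of your $\phi$, and the reduction fails at the very first step. If instead you set $\phi := u - C|\cdot|^2$, then the hypothesis is equivalent to midpoint concavity of $\phi$, but concavity of that $\phi$ gives back semiconcavity of $u$ only with constant $2C$, not $C$.

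That said, the mismatch seems to be inherited from the statement as printed rather than a flaw particular to your argument: $u(z) = C|z|^2$ satisfies $u(x)+u(y)-2u(\tfrac{x+y}{2}) = \tfrac{C}{2}|x-y|^2$ with equality, yet $\lambda u(x) + (1-\lambda)u(y) - u(\lambda x + (1-\lambda)y) = C\lambda(1-\lambda)|x-y|^2$, so it is semiconcave only with constant $2C$. Writing the midpoint condition in the centered form $u(x+h)+u(x-h)-2u(x)\leqslant C|h|^2$ and substituting $h=(x-y)/2$ yields $\tfrac{C}{4}|x-y|^2$, not $\tfrac{C}{2}|x-y|^2$; the stray factor apparently crept in during that translation. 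With the hypothesis corrected to $\tfrac{C}{4}|x-y|^2$ (or, equivalently, the conclusion weakened to constant $2C$), your proof is complete and correct: the restriction to the segment, the dyadic induction, and the density-plus-continuity passage are all sound, and you are right that possible non-convexity of $S$ poses no obstacle since every midpoint used lies on the fixed segment.
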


Hereafter, assume $S$ is an open subset of $\R^n$. Let $u:S\subset\R^n\to\R$ be a continuous function. We recall that, for any $x\in S$, the closed convex sets
\begin{align*}
D^-u(x)&=\left\{p\in\R^n:\liminf_{y\to x}\frac{u(y)-u(x)-\langle p,y-x\rangle}{|y-x|}\geqslant 0\right\},\\
D^+u(x)&=\left\{p\in\R^n:\limsup_{y\to x}\frac{u(y)-u(x)-\langle p,y-x\rangle}{|y-x|}\leqslant 0\right\},
\end{align*}
are called the {\em (Dini) subdifferential} and {\em superdifferential} of $u$ at $x$, respectively.

Let $u:S\to\R$ be a locally Lipschitz function. We recall that a vector $p\in\R^n$ is said to be a {\em reachable} (or {\em limiting}) {\em gradient} of $u$ at $x$ if there exists a sequence $\{x_k\}\subset S\setminus\{x\}$, converging to $x$, such that $u$ is differentiable at $x_k$ for each $k\in\N$ and
$$
\lim_{k\to\infty}Du(x_k)=p.
$$
The set of all reachable gradients of $u$ at $x$ is denoted by $D^{\ast}u(x)$.

\begin{Pro}[Superdifferential of semiconcave functions \cite{Cannarsa-Sinestrari}]\label{basic_facts_of_superdifferential}
Let $u:S\subset\R^n\to\R$ be a semiconcave function and let $x\in S$. Then the following properties hold:
\begin{enumerate}[\rm {(}a{)}]
  \item $D^+u(x)$ is a nonempty compact convex set in $\R^n$ and $D^{\ast}u(x)\subset\partial D^+u(x)$, where  $\partial D^+u(x)$ denotes the topological boundary of $D^+u(x)$;
  \item the set-valued function $x\rightsquigarrow D^+u(x)$ is upper semicontinuous;
  \item $D^+u(x)\not=\varnothing$ and $D^+u(x)$ is a singleton if $D^-u(x)\not=\varnothing$ in addition;
  \item if $D^+u(x)$ is a singleton, then $u$ is differentiable at $x$. Moreover, if $D^+u(x)$ is a singleton for every point in $S$, then $u\in C^1(S)$.
\end{enumerate}
\end{Pro}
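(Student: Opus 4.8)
The plan is to reduce everything to elementary convex analysis. Since semiconcavity is a local property, and since the sets $D^\pm u(x)$, $D^\ast u(x)$ and differentiability of $u$ at $x$ all depend only on the values of $u$ near $x$, it suffices to prove the four assertions under the additional hypothesis that $S$ is convex and $u$ is semiconcave on $S$ with some constant $C$. On such an $S$ the function $v(y):=u(y)-\tfrac C2|y|^2$ is \emph{concave}: inequality \eqref{eq:SCC} is precisely the statement that $\lambda v(x)+(1-\lambda)v(y)\leqslant v(\lambda x+(1-\lambda)y)$. A one-line computation, using $\tfrac C2|y|^2=\tfrac C2|x|^2+C\langle x,y-x\rangle+\tfrac C2|y-x|^2$ and $\tfrac C2|y-x|\to0$, shows that $D^+u(x)=D^+v(x)+Cx$, $D^-u(x)=D^-v(x)+Cx$, $D^\ast u(x)=D^\ast v(x)+Cx$, and that $u$ is differentiable at $x$ if and only if $v$ is. Hence each statement for $u$ is equivalent to the corresponding one for the concave function $v$, and from now on I work with $v$.

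First I would record the basic identity for concave $v$ on a convex open set, namely $D^+v(x)=\{p\in\R^n:\ v(y)\leqslant v(x)+\langle p,y-x\rangle\ \text{for all }y\in S\}$. The inclusion ``$\supseteq$'' is immediate; for ``$\subseteq$'', if the supporting inequality failed at some $y_0$, then concavity of $v$ along the segment $[x,y_0]$ would give $v(x+t(y_0-x))\geqslant v(x)+t\bigl(v(y_0)-v(x)\bigr)>v(x)+\langle p,t(y_0-x)\rangle$ for every $t\in(0,1]$, forcing $\limsup_{y\to x}\frac{v(y)-v(x)-\langle p,y-x\rangle}{|y-x|}>0$, a contradiction. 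From this identity, part (a) is classical: nonemptiness follows from the supporting-hyperplane theorem applied to the hypograph of $v$ at $(x,v(x))$; convexity is immediate from the defining inequality; and compactness holds because a finite concave function on an open set is locally Lipschitz, so $|p|\leqslant \mathrm{Lip}(v)$ on a neighborhood of $x$ and the set is closed. Part (b) then follows by passing to the limit in $v(y)\leqslant v(x_k)+\langle p_k,y-x_k\rangle$ along any sequence $x_k\to x$, $p_k\in D^+v(x_k)$, $p_k\to p$.

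For the remaining parts: for (c), if $p\in D^-v(x)$ and $q\in D^+v(x)$ (nonempty by (a)), then combining the definition of $D^-v(x)$ with the global supporting inequality for $q$ gives $\langle q-p,y-x\rangle\geqslant o(|y-x|)$ as $y\to x$, which forces $q=p$; hence $D^+v(x)$ is a singleton. For the inclusion $D^\ast v(x)\subset\partial D^+v(x)$: given $p=\lim_k Dv(x_k)$ with $x_k\to x$, $x_k\neq x$, pass to a subsequence with $(x_k-x)/|x_k-x|\to e$; adding the supporting inequalities at $x$ and at $x_k$ yields $\langle q-Dv(x_k),x_k-x\rangle\geqslant0$ for every $q\in D^+v(x)$, and dividing by $|x_k-x|$ and letting $k\to\infty$ gives $\langle q-p,e\rangle\geqslant0$ for all $q\in D^+v(x)$, so $p$ minimizes a linear functional over $D^+v(x)$ and therefore lies on $\partial D^+v(x)$. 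Finally, for (d), assume $D^+v(x)=\{p\}$; if $v$ were not differentiable at $x$ with gradient $p$ there would be $y_k\to x$ with $\frac{v(y_k)-v(x)-\langle p,y_k-x\rangle}{|y_k-x|}\leqslant-\delta<0$, while choosing $p_k\in D^+v(y_k)$ and using the supporting inequality at $y_k$ evaluated at $x$ gives $\frac{v(y_k)-v(x)-\langle p,y_k-x\rangle}{|y_k-x|}\geqslant-|p_k-p|$; local boundedness of the superdifferentials together with (b) force $p_k\to p$, a contradiction. When $D^+v$ is a singleton at every point, the map $x\mapsto Dv(x)$ is single-valued and upper semicontinuous, hence continuous, so $v\in C^1(S)$.

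The only genuinely delicate point is the inclusion $D^\ast v(x)\subset\partial D^+v(x)$, where one must carry out the direction-extraction argument carefully; note also that the claim is vacuous when $D^+v(x)$ is a singleton, since then $\partial D^+v(x)=D^+v(x)$. All the other steps are a routine translation of standard properties of subdifferentials of convex functions.
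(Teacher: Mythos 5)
Your proof is correct. The paper does not actually give an argument here: Proposition~\ref{basic_facts_of_superdifferential} is stated with a citation to the book of Cannarsa and Sinestrari, and no proof appears in the text. Your reduction via $v=u-\tfrac C2|\cdot|^2$ to a concave function, the global characterization of $D^+v$ by supporting hyperplanes, and the subsequent compactness, closed-graph, direction-extraction, and local-boundedness arguments are precisely the route taken in that reference, so there is no genuine difference of method to report. Two minor points you may wish to make explicit when polishing: (i) in the $D^\ast v(x)\subset\partial D^+v(x)$ step, you should record that $p\in D^+v(x)$ (it follows from the same closed-graph argument as in part (b)) before invoking that a minimizer of $\langle\cdot,e\rangle$ cannot be an interior point; and (ii) the fact that $D^+v(x_k)=\{Dv(x_k)\}$ at a differentiability point is a one-line consequence of the definitions (not of part (c)), so there is no circularity, but this is worth saying. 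Neither issue is a gap, just a matter of presentation.
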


\subsection{Generalized characteristics}

A basic criterion for the propagation of singularities of  viscosity solutions to Hamilton-Jacobi equations along generalized characteristics was given in \cite{Albano-Cannarsa} (see \cite{Cannarsa-Yu,Yu} for an improved version and a simplified proof of this result). 

\begin{defn}[Generalized characteristic]
A Lipschitz arc $\mathbf{x}:[0,T]\to\Omega,\,(T>0),$ is said to be a {\em generalized characteristic} of the Hamilton-Jacobi equation \eqref{eq:HJ_BP} if $\mathbf{x}$ satisfies the differential inclusion
\begin{equation}\label{generalized_characteristics}
\dot{\mathbf{x}}(s)\in\mathrm{co}\, H_p\big(\mathbf{x}(s),D^+u(\mathbf{x}(s))\big),\quad \text{a.e.\ in}\ [0,T]\,.
\end{equation}
\end{defn}


\section{Representation formula for the solution of Dirichlet problem}

Throughout this section the following standing hypotheses $(\bf{SH})$ will be assumed without further notice:
\begin{enumerate}[({\bf{SH}}1)]
  \item $\Omega\subset \R^n$ is a bounded Lipschitz domain;
  \item $L$ is a Tonelli Lagrangian;
  \item $g:\partial\Omega\to\R$ is a function satisfying the compatibility condition
\begin{equation}\label{eq:supcritical1}
	g(x)-g(y)\leqslant\Phi^{\Omega}_L(y,x),\quad \forall x,y\in\partial\Omega;
\end{equation} 
\item the relative Ma\~n\'e's critical value of $L$  satisfies the energy condition
 \begin{equation}\label{eq:supcritical2}
	c_{\Omega}(L)\leqslant0.
\end{equation}
\end{enumerate}

Consider the following minimization problem 
\begin{equation}\label{eq:represent_formulae}\tag{CV$_g$}
	u(x):=\inf_{y\in\partial\Omega}\{g(y)+\Phi^{\Omega}_L(y,x)\},\quad x\in\overline{\Omega}.
\end{equation}

{\em From now on, $u:\overline{\Omega}\to\R$ denotes the value function of \eqref{eq:represent_formulae}}. 

\vskip.2cm

The following facts are immediate consequences of $(\bf{SH})$:
\begin{itemize}[$\bullet$]
   \item since $\Omega$ is a bounded Lipschitz domain, in view of Remark \ref{quasiconvex}, $\Omega$ is a $C$-quasiconvex domain for some constant $C>0$;
   \item since $g$ satisfies \eqref{eq:supcritical1} and $c_{\Omega}(L)\leqslant 0$, by invoking Lemma \ref{poten_proper} (3) we deduce that,  for any $x,y\in\partial\Omega$,
	\begin{align*}
		g(x)-g(y)\leqslant\Phi^{\Omega}_L(y,x)\leqslant C_1|x-y|,
	\end{align*}
	where $C_1:=\theta_2(1)C$. Thus, \eqref{eq:supcritical1} and \eqref{eq:supcritical2} together imply that $g$ is Lipschitz on $\partial\Omega$;
  \item in view of Lemma \ref{poten_proper}, $u(x)>-\infty$ for all $x\in\overline{\Omega}$;
  \item since both $\Phi^{\Omega}_L$ and $g$ are Lipschitz continuous functions, the infimum defining $u$ is attained at some point $y_x\in\overline{\Omega}$, which will be called a minimizer for \eqref{eq:represent_formulae} at $x$.
  \end{itemize}

\begin{Pro}\label{compatible_u}
	$u$ is dominated by $L$, i.e., 
	\begin{equation}\label{eq:sub_solution}
		u(x')-u(x)\leqslant\Phi^{\Omega}_L(x,x'),\quad\forall x,x'\in\overline{\Omega}.
	\end{equation}
	Moreover, $u$ is Lipschitz on $\overline{\Omega}$ and 
$u=g$ on $\partial\Omega$. 
\end{Pro}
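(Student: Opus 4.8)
The plan is to read off all three assertions from the elementary properties of the relative Mañé potential collected in Lemma~\ref{poten_proper}, combined with the compatibility condition \eqref{eq:supcritical1} and the energy condition \eqref{eq:supcritical2}; no new estimate is needed. First I would establish the domination inequality \eqref{eq:sub_solution}. Fix $x,x'\in\overline{\Omega}$. For every $y\in\partial\Omega$, the triangle-type inequality of Lemma~\ref{poten_proper}(1) gives $\Phi^{\Omega}_L(y,x')\leqslant\Phi^{\Omega}_L(y,x)+\Phi^{\Omega}_L(x,x')$, hence $g(y)+\Phi^{\Omega}_L(y,x')\leqslant\big(g(y)+\Phi^{\Omega}_L(y,x)\big)+\Phi^{\Omega}_L(x,x')$. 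Since $c_{\Omega}(L)\leqslant 0$, Lemma~\ref{eq:equiv_Phi_0} ensures $\Phi^{\Omega}_L(x,x')>-\infty$, so this finite quantity can be pulled out of the infimum over $y\in\partial\Omega$; taking that infimum yields $u(x')\leqslant u(x)+\Phi^{\Omega}_L(x,x')$, which is \eqref{eq:sub_solution}.

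Next I would deduce Lipschitz continuity. As recorded after $(\bf{SH})$, $\Omega$ is $C$-quasiconvex for some $C>0$ and $c_{\Omega}(L)\leqslant 0$, so Lemma~\ref{poten_proper}(3)(i) gives $|\Phi^{\Omega}_L(x,x')|\leqslant\theta_2(1)C|x-x'|$ for all $x,x'\in\overline{\Omega}$. Plugging this into \eqref{eq:sub_solution} gives $u(x')-u(x)\leqslant\theta_2(1)C|x-x'|$; exchanging the roles of $x$ and $x'$ gives the reverse bound. Hence $u$ is Lipschitz on $\overline{\Omega}$ with constant $\theta_2(1)C$.

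Finally I would check the boundary condition. Let $x\in\partial\Omega$. Choosing $y=x$ as a competitor in \eqref{eq:represent_formulae} and using $\Phi^{\Omega}_L(x,x)=0$ (Lemma~\ref{poten_proper}(2), applicable because $c_{\Omega}(L)\leqslant 0$) gives $u(x)\leqslant g(x)+\Phi^{\Omega}_L(x,x)=g(x)$. Conversely, for every $y\in\partial\Omega$ the compatibility condition \eqref{eq:supcritical1} reads $g(x)\leqslant g(y)+\Phi^{\Omega}_L(y,x)$; taking the infimum over $y\in\partial\Omega$ gives $g(x)\leqslant u(x)$. Therefore $u=g$ on $\partial\Omega$.

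The whole argument is essentially bookkeeping, so there is no serious obstacle; the two points that require a little care are making sure every potential and infimum in sight is finite and attained — guaranteed by the standing hypotheses via Lemmas~\ref{eq:equiv_Phi_0} and~\ref{poten_proper} and the Lipschitz continuity of $g$ on the compact set $\partial\Omega$ — and invoking \eqref{eq:supcritical1} in the correct direction, which is precisely where the compatibility of the boundary datum $g$ is genuinely used to obtain the lower bound $u\geqslant g$ on $\partial\Omega$.
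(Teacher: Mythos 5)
Your proposal is correct and follows essentially the same route as the paper: domination via the triangle inequality for $\Phi^{\Omega}_L$, Lipschitz continuity from Lemma~\ref{poten_proper}(3) combined with \eqref{eq:sub_solution}, and the boundary identity from Lemma~\ref{poten_proper}(2) and \eqref{eq:supcritical1}. The only minor difference is that the paper fixes a minimizer $y$ of \eqref{eq:represent_formulae} at $x$ before applying the triangle inequality, whereas you take the infimum over all $y\in\partial\Omega$ directly; both are valid and the computation is the same.
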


\begin{proof}
	Let $y$ be a minimizer of \eqref{eq:represent_formulae} at $x$. Then,
	by the definition of $u$ and Lemma \ref{poten_proper} (1), we get
	\begin{align*}
		u(x')-u(x)
		\leqslant&\Phi^{\Omega}_L(y,x')-\Phi^{\Omega}_L(y,x)\leqslant\Phi^{\Omega}_L(x,x'),\quad \forall x'\in\overline{\Omega}. 
	\end{align*}
	Now, let $x\in\partial\Omega$. By Lemma \ref{poten_proper} (2) and (\ref{eq:supcritical1}), we have
	\begin{align*}
		u(x)\leqslant g(x)+\Phi^{\Omega}_L(x,x)=g(x)\leqslant\inf_{y\in\partial\Omega}\{g(y)+\Phi^{\Omega}_L(y,x)\}=u(x).
	\end{align*}
	So, $u\vert_{\partial\Omega}=g$.  Lemma \ref{poten_proper} (3) and (\ref{eq:sub_solution}) imply that $u$ is Lipschitz on $\overline{\Omega}$.
\end{proof}

\subsection{Exit time problem} Problem \eqref{eq:represent_formulae} is closely related to the so called {\em exit time problem} in optimal control, and the readers can refer to Chapter IV of \cite{Bardi-Capuzzo-Dolcetta} or Chapter 8 of \cite{Cannarsa-Sinestrari} for more on this topic. In order to adapt such a problem  to the context of weak KAM theory, we will slightly modify the standard terminology. Moreover, in order to give a more precise formulation of the Dirichlet problem \eqref{eq:HJ_BP}, we need a Lipschitz estimate for the associated minimal curves in \eqref{eq:represent_formulae}. Now, we will show that \eqref{eq:represent_formulae} is indeed an exit time problem. 

\begin{defn}[Calibrated curve]
	 We say that a curve $\xi\in\Gamma^{a,b}_{x,y}(\overline{\Omega})$ with $a<b$ is a {\em $(u,L,\Omega)$-calibrated curve}, or $u$-calibrated curve for short, if 
	$$
	u(\xi(b))-u(\xi(a))=\int^b_aL(\xi,\dot{\xi})\ ds.
	$$
	A curve $\xi:(-\infty,0\,]\to\R^n$ with $\xi(s)\in\overline{\Omega}$ for all $s\in(-\infty,0\,]$, is called a $u$-calibrated curve if it is a $u$-calibrated curve on each compact sub-interval of $(-\infty,0\,]$.
\end{defn}

\begin{Pro}\label{pro_calibrated}
Let $x,y\in\overline{\Omega}$. Then
\begin{enumerate}[\rm (a)]
  \item if $\xi\in\Gamma^{a,b}_{x,y}(\overline{\Omega})$, $-\infty<a<b<+\infty$, is a $u$-calibrated curve, then the restriction of $\xi$ to any sub-interval of $[a,b]$ is still a $u$-calibrated curve;
  \item let  $M(x)$ be the set of all minimizers of \eqref{eq:represent_formulae} at $x$. Then
  \begin{enumerate}[\rm (i)]
  \item if there exists $y\in M(x)$ such that $A^{\Omega}_T(y,x)=\Phi^{\Omega}_L(y,x)$ for some $T\in(0,+\infty)$, then any minimizer $\xi$ of $A^{\Omega}_T(y,x)$ is $u$-calibrated and, for any $t\in[0,T]$ with $\xi(-t)\in\partial\Omega$, we have 
    $$
  u(x)=g(\xi(-t))+A^{\Omega}_T(\xi(-t),x);
  $$
  \item if $A^{\Omega}_T(y,x)>\Phi^{\Omega}_L(y,x)$ for all $T\in(0,+\infty)$ and all $y\in M(x)$, then there exists a $u$-calibrated curve $\xi:(-\infty,0\,]\to\R^n$, with $\xi(0)=x$ and $\xi(s)\in\overline{\Omega}$ for all $s\in(-\infty,0\,]$,  such that $\xi(-t)\not\in\partial\Omega$ for all $t>0$.
  \end{enumerate}
\end{enumerate}
\end{Pro}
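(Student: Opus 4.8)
The plan is to handle the three parts in turn, deriving everything from the domination inequality \eqref{eq:sub_solution} and the superadditivity of Ma\~n\'e's potential (Lemma \ref{poten_proper}(1)); only part (b)(ii) will require a genuine compactness argument. For part (a): given $a\le c\le d\le b$, the restriction $\xi|_{[c,d]}$ lies in $\Gamma^{c,d}_{\xi(c),\xi(d)}(\overline{\Omega})$, so Proposition \ref{compatible_u} gives $u(\xi(d))-u(\xi(c))\le\Phi^{\Omega}_L(\xi(c),\xi(d))\le\int_c^d L(\xi,\dot\xi)\,d\tau$. Applying this to the three pieces of $[a,b]=[a,c]\cup[c,d]\cup[d,b]$ and summing, the calibration hypothesis forces the total to be $u(\xi(b))-u(\xi(a))=\int_a^b L(\xi,\dot\xi)\,d\tau$; hence each of the three inequalities is an equality, which is exactly the claim.

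For part (b)(i) I would compute $u$ along a minimizer $\xi$ of $A^{\Omega}_T(y,x)$, parametrized on $[-T,0]$ with $\xi(-T)=y$ and $\xi(0)=x$. For every $s\in[-T,0]$ the definition of $u$ together with superadditivity gives $u(\xi(s))\le g(y)+\Phi^{\Omega}_L(y,\xi(s))\le g(y)+\int_{-T}^s L$, while \eqref{eq:sub_solution} together with $u(x)=g(y)+A^{\Omega}_T(y,x)=g(y)+\int_{-T}^0 L$ (here the facts that $y\in M(x)$ and $A^{\Omega}_T(y,x)=\Phi^{\Omega}_L(y,x)$ are used) gives $u(\xi(s))\ge u(x)-\int_s^0 L=g(y)+\int_{-T}^s L$. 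Thus $u(\xi(s))=g(y)+\int_{-T}^s L$ for all $s$, which yields at once the $u$-calibration of $\xi$ and, at any $t$ with $\xi(-t)\in\partial\Omega$ (where $u=g$ by Proposition \ref{compatible_u}), the identity $u(x)=g(\xi(-t))+A^{\Omega}_t(\xi(-t),x)$, the last step because the equalities just obtained make $\xi|_{[-t,0]}$ a minimizer of $A^{\Omega}_t(\xi(-t),x)$.

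Part (b)(ii) is the heart of the matter. Fix $y\in M(x)$; since $x\in\Omega$ we have $y\in\partial\Omega$, so $y\ne x$. By hypothesis the infimum $\Phi^{\Omega}_L(y,x)=\inf_{t>0}A^{\Omega}_t(y,x)$ is attained at no finite positive time; since $A^{\Omega}_t(y,x)\to+\infty$ as $t\to0^+$ (superlinearity of $L$, using $y\ne x$) and $t\mapsto A^{\Omega}_t(y,x)$ is lower semicontinuous on $(0,+\infty)$ (a standard reparametrization/Tonelli argument), a minimizing sequence of times must diverge: there are $T_k\to+\infty$ and $\varepsilon_k\downarrow 0$ with $A^{\Omega}_{T_k}(y,x)=\Phi^{\Omega}_L(y,x)+\varepsilon_k$. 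Taking minimizers $\xi_k\in\Gamma^{-T_k,0}_{y,x}(\overline{\Omega})$, Lemma \ref{eq:Lip_main} and boundedness of $\Omega$ bound their speeds by $\kappa(CD/T_k)\le\kappa(CD)$ once $T_k\ge1$, so Arzel\`{a}--Ascoli and a diagonal extraction over the intervals $[-N,0]$, $N\in\N$, produce a Lipschitz curve $\xi:(-\infty,0]\to\overline{\Omega}$ with $\xi(0)=x$, with $\xi_k\to\xi$ locally uniformly and $\dot\xi_k\rightharpoonup\dot\xi$ weakly in $L^1_{\mathrm{loc}}$. Repeating the two-sided estimate of (b)(i) with $A^{\Omega}_{T_k}(y,x)$ in place of $\Phi^{\Omega}_L(y,x)$ gives $\bigl|u(\xi_k(b))-u(\xi_k(a))-\int_a^b L(\xi_k,\dot\xi_k)\,d\tau\bigr|\le\varepsilon_k$ on each $[-N,0]$; letting $k\to\infty$ and using continuity of $u$, weak lower semicontinuity of the action, and the upper bound $u(\xi(b))-u(\xi(a))\le\int_a^b L(\xi,\dot\xi)\,d\tau$ from \eqref{eq:sub_solution}, we obtain the calibration identity on all of $(-\infty,0]$. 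Finally, if $\xi(-t_0)\in\partial\Omega$ for some $t_0>0$, then calibration on $[-t_0,0]$ and $u|_{\partial\Omega}=g$ force $u(x)=g(\xi(-t_0))+\int_{-t_0}^0 L\ge g(\xi(-t_0))+\Phi^{\Omega}_L(\xi(-t_0),x)\ge u(x)$, so equality holds throughout; hence $\xi(-t_0)\in M(x)$ and $A^{\Omega}_{t_0}(\xi(-t_0),x)=\Phi^{\Omega}_L(\xi(-t_0),x)$, contradicting the hypothesis of (ii). Therefore $\xi(-t)\notin\partial\Omega$ for all $t>0$.

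I expect the main obstacle to be the limiting construction in (b)(ii): securing a uniform Lipschitz bound for the near-minimizers (this is precisely where Lemma \ref{eq:Lip_main} and the boundedness of $\Omega$ enter), transferring the approximate-calibration identity to the limit through weak lower semicontinuity of the action, and---as a preliminary---the soft but essential fact that the optimal times $T_k$ must diverge, which rests on the blow-up of $A^{\Omega}_t(y,x)$ as $t\to0$ together with the lower semicontinuity of $t\mapsto A^{\Omega}_t(y,x)$.
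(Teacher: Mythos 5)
Your proof is correct and follows essentially the same route as the paper: part (a) by splitting $[a,b]$ into three subintervals and using the domination inequality, part (b)(i) by the two-sided sandwich along a minimizer, and part (b)(ii) by extracting a limit of near-optimal minimizers via Lemma \ref{eq:Lip_main} and Arzel\`a--Ascoli, then using weak lower semicontinuity of the action. The one place you are more careful than the paper is in justifying that the near-optimal times $T_k$ must diverge --- you invoke lower semicontinuity of $t\mapsto A^{\Omega}_t(y,x)$ together with its blow-up as $t\to 0^+$, whereas the paper simply asserts that such a sequence with $T_k\to\infty$ exists (and lets $y_k$ vary rather than fixing $y\in M(x)$); also, your conclusion $u(x)=g(\xi(-t))+A^{\Omega}_t(\xi(-t),x)$ in (b)(i) corrects what appears to be a typo ($A^{\Omega}_T$ instead of $A^{\Omega}_t$) in the statement.
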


\begin{proof}
Suppose $\xi\in\Gamma^{a,b}_{x,y}(\overline{\Omega})$ is a $u$-calibrated curve. Let $a\leqslant c<d\leqslant b$. Then, we have
\begin{align*}
	u(\xi(b))-u(\xi(d))\leqslant&\int^b_dL(\xi,\dot{\xi})\ ds,\\
	u(\xi(d))-u(\xi(c))\leqslant&\int^d_cL(\xi,\dot{\xi})\ ds,\\
	u(\xi(c))-u(\xi(a))\leqslant&\int^c_aL(\xi,\dot{\xi})\ ds,
\end{align*}
by Proposition \ref{compatible_u}. On the other hand, the sum of the left side of the three inequalities above equals to that of the right side since $\xi$ is $u$-calibrated. Thus each inequality should be an equality. Thus, $\xi$ restricted to $[c,d]$, is also $u$-calibrated. This completes the proof of (a).

Now we turn to the proof of (b). Let $y$ be a minimizer of \eqref{eq:represent_formulae} at $x$. If $A^{\Omega}_T(y,x)=\Phi^{\Omega}_L(y,x)$ for some $T\in(0,+\infty)$, then, recalling Proposition \ref{compatible_u}, we have that
$$
u(x)=g(y)+A^{\Omega}_T(y,x)=u(y)+\int^0_{-T}L(\xi,\dot{\xi})\ ds,
$$
where $\xi\in\Gamma^{-T,0}_{y,x}(\overline{\Omega})$ is a minimizer of $A^{\Omega}_T(y,x)$. Thus $\xi$ is $u$-calibrated and, if $\xi(-t)\in\partial\Omega$ for some $t\in[0,T]$, we have that
$$
u(x)=u(\xi(-t))+\int^0_{-t}L(\xi,\dot{\xi})\ ds\geqslant g(\xi(-t))+\Phi^{\Omega}_L(\xi(-t),x)\geqslant u(x),
$$
where we have used (a), the fact that $u\vert_{\partial\Omega}=g$ and the definition of $u$. So far, we have proved (i).

Finally, suppose $A^{\Omega}_T(y,x)>\Phi^{\Omega}_L(y,x)$ for all $T\in(0,+\infty)$ and all $y\in M(x)$. Then there exists a sequence $(T_k,y_k)\in(0,+\infty)\times\partial\Omega$, with $\lim_{k\to\infty}T_k=+\infty$, and minimizers $\xi_k\in\Gamma^{-T_k,0}_{y_k,x}(\overline{\Omega})$ of $A^\Omega_{T_k}(y_k,x)$ such that
\begin{equation}\label{eq:calibrated_1}
	u(x)>u(\xi_k(-T_k))+\int^0_{-T_k}L(\xi_k,\dot{\xi}_k)\ ds-\frac 1k.
\end{equation}
Now, fix any $t>0$ and choose $T_k>t$. Recalling \eqref{eq:sub_solution}, we conclude that
\begin{align}
	u(x)-u(\xi_k(-t))\leqslant&\int^0_{-t}L(\xi_k,\dot{\xi}_k)\ ds,\label{eq:calibrated_2}\\
	u(\xi_k(-t))-u(\xi_k(-T_k))\leqslant&\int^{-t}_{-T_k}L(\xi_k,\dot{\xi}_k)\ ds.\label{eq:calibrated_3}
\end{align}
The combination of \eqref{eq:calibrated_1} and \eqref{eq:calibrated_3} leads to
\begin{equation}\label{eq:calibrated_4}
	\begin{split}
		u(x)\geqslant&\,u(\xi_k(-t))-\int^{-t}_{-T_k}L(\xi_k,\dot{\xi}_k)\ ds+\int^0_{-T_k}L(\xi_k,\dot{\xi}_k)\ ds-\frac 1k\\
	=&\,u(\xi_k(-t))+\int^0_{-t}L(\xi_k,\dot{\xi}_k)\ ds-\frac 1k.
	\end{split}
\end{equation}

 By Lemma \ref{eq:Lip_main}, $\{\xi_k\}$ is equi-Lipschitz for $k$ large enough. Then, combining \eqref{eq:calibrated_2} and \eqref{eq:calibrated_4}, invoking the Arzel\`a-Ascoli theorem, and taking a subsequence if necessary, we conclude that there exists a Lipschitz curve $\xi:(-\infty,0\,]\to\R^n$, with $\xi(0)=x$ and $\xi(s)\in\overline{\Omega}$ for all $s\in(-\infty,0\,]$, such that
\begin{gather}
	u(x)=u(\xi(-t))+\int^0_{-t}L(\xi,\dot{\xi})\ ds,\quad t>0.
\end{gather}
This proves that $\xi$ is $u$-calibrated. If $\xi(-t)\in\partial\Omega$ for some $t>0$, then
\begin{align*}
	u(x)=u(\xi(-t))+\int^0_{-t}L(\xi,\dot{\xi})\ ds=g(\xi(-t))+\int^0_{-t}L(\xi,\dot{\xi})\ ds\geqslant u(x).
\end{align*}
Then we have that $\xi(-t)$ is a minimizer of \eqref{eq:represent_formulae} and $A^{\Omega}_t(\xi(-t),x)=\Phi^{\Omega}_L(\xi(-t),x)$ which contradicts our assumption. This completes the proof of (ii).
\end{proof}

From now on we will impose a stronger condition than ({\bf{SH}}4) on the relative Ma\~n\'e's critical value of $L$: 
\medskip

\noindent({\bf{SH}}4')  \ \ \ \qquad \qquad \qquad \ \ \ \ \ \ \qquad
	$c_{\Omega}(L)<0$.

\begin{Lem}\label{T_bound}
	For any $x\in\overline{\Omega}$ and any minimizer $y^*\neq x$ of \eqref{eq:represent_formulae} at $x$, we have that $A^{\Omega}_{T}(y^*,x)=\Phi^{\Omega}_L(y^*,x)$ for some $T>0$. Moreover, 
	$$
	T\leqslant \frac{2C_1}{-c_{\Omega}(L)}|y^*-x|,
	$$
	where $C_1:=\theta_2(1)C$ that has been defined at the beginning of this section.
\end{Lem}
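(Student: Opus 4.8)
The plan is to realize $\Phi^{\Omega}_L(y^*,x)$ as a genuine minimum of the function $t\mapsto A^{\Omega}_t(y^*,x)$ on $(0,+\infty)$ and then to locate that minimum quantitatively by exploiting the \emph{strict} negativity of $c_{\Omega}(L)$. \emph{Step 1 (a linear-in-$t$ lower bound).} First I would record the sub-additivity of the relative fundamental solution, $A^{\Omega}_{t+s}(z,w)\leqslant A^{\Omega}_t(z,\zeta)+A^{\Omega}_s(\zeta,w)$ for all $z,w,\zeta\in\overline{\Omega}$ and $t,s>0$, which follows by concatenating admissible arcs. Applying it with $z=w=y^*$, $\zeta=x$ gives $A^{\Omega}_{t+s}(y^*,y^*)\leqslant A^{\Omega}_t(y^*,x)+A^{\Omega}_s(x,y^*)$. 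By the definition \eqref{eq:mane} of $c_{\Omega}(L)$ the left-hand side is $\geqslant -c_{\Omega}(L)(t+s)$, while taking $s=C|y^*-x|>0$ (this is where $y^*\neq x$ enters) and using the reparametrized-curve estimate \eqref{eq:upper_bound1} gives $A^{\Omega}_s(x,y^*)\leqslant s\,\theta_2(C|y^*-x|/s)=C_1|y^*-x|$. Hence
\[
A^{\Omega}_t(y^*,x)\ \geqslant\ -c_{\Omega}(L)\big(t+C|y^*-x|\big)-C_1|y^*-x|\ \geqslant\ -c_{\Omega}(L)\,t-C_1|y^*-x|,\qquad\forall\,t>0,
\]
since $-c_{\Omega}(L)\geqslant 0$. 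In particular $A^{\Omega}_t(y^*,x)\to+\infty$ as $t\to+\infty$, because $c_{\Omega}(L)<0$.

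\emph{Step 2 (the infimum is attained).} Using the superlinearity of $\theta_1$ together with $\int_0^t|\dot{\xi}|\,ds\geqslant|y^*-x|>0$ for every admissible $\xi$, one checks that $A^{\Omega}_t(y^*,x)\to+\infty$ also as $t\to 0^+$ (again using $y^*\neq x$). On the other hand $A^{\Omega}_t(y^*,x)$ is finite for each $t>0$ by Tonelli's theorem, Proposition~\ref{Tonelli_straint}, together with the fact that $C$-quasiconvexity makes $\Gamma^{0,t}_{y^*,x}(\overline{\Omega})$ nonempty; it is bounded below by Step~1; and $t\mapsto A^{\Omega}_t(y^*,x)$ is lower semicontinuous on $(0,+\infty)$: given $t_n\to T\in(0,+\infty)$ and minimizers $\xi_n$ of $A^{\Omega}_{t_n}(y^*,x)$, the \emph{a priori} estimate of Lemma~\ref{eq:Lip_main} makes the $\xi_n$ equi-Lipschitz, so after a linear time-rescaling to the common interval $[0,T]$ one applies the Arzel\`a--Ascoli theorem and the Tonelli lower semicontinuity of the action. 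A function on $(0,+\infty)$ that is lower semicontinuous, bounded below, finite somewhere, and tends to $+\infty$ at both ends attains its infimum; thus there is $T>0$ with $A^{\Omega}_T(y^*,x)=\inf_{t>0}A^{\Omega}_t(y^*,x)=\Phi^{\Omega}_L(y^*,x)$.

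\emph{Step 3 (the quantitative bound).} At such a $T$ I would combine the lower bound of Step~1 with the upper bound $\Phi^{\Omega}_L(y^*,x)\leqslant C_1|y^*-x|$ (Lemma~\ref{poten_proper}(3)(i), or directly $\Phi^{\Omega}_L(y^*,x)\leqslant A^{\Omega}_{C|y^*-x|}(y^*,x)\leqslant C_1|y^*-x|$):
\[
-c_{\Omega}(L)\,T-C_1|y^*-x|\ \leqslant\ A^{\Omega}_T(y^*,x)\ =\ \Phi^{\Omega}_L(y^*,x)\ \leqslant\ C_1|y^*-x|,
\]
and solving for $T$, using $-c_{\Omega}(L)>0$ (this is where ({\bf{SH}}4$'$) is indispensable), yields $T\leqslant \dfrac{2C_1}{-c_{\Omega}(L)}|y^*-x|$. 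The routine parts are the concatenation inequality and the continuity-in-$t$ bookkeeping; the conceptual heart — and the only place the strict sign $c_{\Omega}(L)<0$ is essential — is the linear-in-$t$ lower bound of Step~1, which simultaneously forces the minimum to exist (coercivity as $t\to+\infty$) and pins it inside the explicit interval. The main technical care is needed in the lower-semicontinuity step, where the minimizing arcs live on varying time intervals and must be transported to a fixed interval before passing to the limit.
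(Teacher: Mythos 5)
Your proof is correct and follows essentially the same route as the paper's: the heart of both arguments is the linear-in-$t$ lower bound $A^{\Omega}_t(y^*,x)\geqslant -c_{\Omega}(L)\,t-C_1|y^*-x|$ (which you derive by sub-additivity and the definition of $c_{\Omega}(L)$, whereas the paper obtains the same inequality by passing to $L_1=L+c_{\Omega}(L)$ and invoking Lemma~\ref{poten_proper}), combined with the upper bound $\Phi^{\Omega}_L(y^*,x)\leqslant C_1|y^*-x|$. The paper phrases the existence of $T$ as a contradiction argument (a minimizing sequence $T_k\to+\infty$ would violate the bound), while you make it a direct coercivity-plus-lower-semicontinuity statement; the quantitative estimate then falls out identically in both versions.
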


\begin{proof}
	Let $L_1=L+c_{\Omega}(L)$. Then $c_{\Omega}(L_1)=0$.  In view of Lemma \ref{poten_proper} (3), for any $t>0$ and $x,y\in\overline{\Omega}$ we obtain
	\begin{align}\label{eq:pot}
		A^{\Omega}_t(y,x)+c_{\Omega}(L)t\geqslant\Phi^{\Omega}_{L_1}(y,x)\geqslant -C_1|x-y|.
	\end{align}
	
	Now, fix any $x\in\overline{\Omega}$ and let $y^*\neq x$ be a minimizer of \eqref{eq:represent_formulae} at $x$. We claim that $\Phi^{\Omega}_L(y^*,x)=A^{\Omega}_T(y^*,x)$  for some $T>0$. Otherwise, by Proposition \ref{compatible_u} and Lemma \ref{poten_proper}, there would exist a sequence $T_k\to+\infty$ such that
	\begin{equation}\label{eq:pot2}
		A^{\Omega}_{T_k}(y^*,x)\leqslant u(x)-u(y^*)+\frac 1k\leqslant\Phi^{\Omega}_L(y^*,x)+\frac 1k\leqslant C_1|y^*-x|+\frac 1k.
	\end{equation}
	The combination of \eqref{eq:pot} and\eqref{eq:pot2} leads to 
	$$
	-c_{\Omega}(L)T_k\leqslant 2C_1|y^*-x|+\frac 1k,
	$$
	which is impossible.
	
	Therefore,  again by Lemma \ref{poten_proper} (3) we have that 
	$$A^{\Omega}_T(y^*,x)=\Phi^{\Omega}_L(y^*,x)\leqslant C_1|y^*-x|.$$ 
	Thus, owing to \eqref{eq:pot},
	$$
	T\leqslant\frac{2C_1}{-c_{\Omega}(L)}|y^*-x|.
	$$
This completes the proof.
\end{proof}


\begin{defn}[Exit time function]\label{exit_time_function}
We define the {\em exit time function} $T:\overline{\Omega}\to(0,+\infty]$ by

$$
T(x)=\inf_{\xi}T_{\xi}(x),\quad \forall x\in\overline{\Omega},
$$
where the infimum is taken among $u$-calibrated curves $\xi$ defined on $[-T,0\,]$ or $(-\infty,0\,]$ with $\xi(0)=x$, and 
$$
T_{\xi}(x)=\inf\{s\geqslant 0: \xi(-s)\in\partial\Omega \}.
$$
\end{defn}
By definition it is easy to see that $T(x)=0$ for all $x\in\partial \Omega$. From Lemma \ref{T_bound}, there is a constant $B>0$ such that 
\begin{align}\label{T-up}
T(x)\leqslant B,\quad \forall x\in\overline{\Omega}.
\end{align}
\begin{Lem}
	Exit time function $x\mapsto T(x)$ is lower semicontinuous on $\overline{\Omega}$.
\end{Lem}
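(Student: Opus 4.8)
The plan is to prove lower semicontinuity of $T$ directly from the definition, by a compactness argument on $u$-calibrated curves. Fix $x_0\in\overline{\Omega}$ and a sequence $x_k\to x_0$; I want to show $T(x_0)\leqslant\liminf_k T(x_k)$. After passing to a subsequence, assume $T(x_k)\to\ell:=\liminf_k T(x_k)$; if $\ell=+\infty$ there is nothing to prove, so assume $\ell<+\infty$ (note $\ell\leqslant B$ by \eqref{T-up}). For each $k$, by Definition \ref{exit_time_function} there is a $u$-calibrated curve $\xi_k$ (defined on $[-T,0]$ or $(-\infty,0]$) with $\xi_k(0)=x_k$ and $T_{\xi_k}(x_k)\leqslant T(x_k)+1/k$, hence an exit time $s_k:=T_{\xi_k}(x_k)$ with $\xi_k(-s_k)\in\partial\Omega$ and $s_k\to\ell$.

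The key step is equi-Lipschitz control. On the interval $[-s_k,0]$ the curve $\xi_k$ is $u$-calibrated, so by Proposition \ref{pro_calibrated}(a) it is calibrated on every subinterval; in particular, restricting to $[-s_k,0]$ and recalling that a $u$-calibrated curve realizing $u(x_k)=u(\xi_k(-s_k))+\int_{-s_k}^0 L(\xi_k,\dot\xi_k)\,ds = g(\xi_k(-s_k))+\int_{-s_k}^0 L(\xi_k,\dot\xi_k)\,ds$ is a minimizer of $A^{\Omega}_{s_k}(\xi_k(-s_k),x_k)$, Lemma \ref{eq:Lip_main} applies: since $\Omega$ is bounded and $C$-quasiconvex with diameter $D$, and $s_k$ stays in a compact subinterval of $(0,+\infty)$ bounded away from $0$ (here I use $\ell>0$ — see the obstacle below), we get $\operatorname{ess\,sup}|\dot\xi_k|\leqslant\kappa(CD/s_k)\leqslant$ a uniform constant. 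Extend each $\xi_k$ by $\xi_k(-s_k)$ for $s<-s_k$ if needed, so all $\xi_k$ are defined and equi-Lipschitz on $[-B-1,0]$. By Arzel\`a–Ascoli, along a further subsequence $\xi_k\to\xi$ uniformly on $[-B-1,0]$ with $\xi$ Lipschitz, $\xi(0)=x_0$, $\xi(s)\in\overline\Omega$, $\xi(-s_k)\to\xi(-\ell)\in\partial\Omega$ (using $s_k\to\ell$ and closedness of $\partial\Omega$), and by lower semicontinuity of the action functional together with Proposition \ref{compatible_u} the limit curve $\xi$ is $u$-calibrated on $[-\ell,0]$. Hence $T_\xi(x_0)\leqslant\ell$, so $T(x_0)\leqslant\ell=\liminf_k T(x_k)$.

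The main obstacle is the degenerate case where the exit times $s_k$ do not stay away from $0$, i.e. $\ell=0$: then the Lipschitz bound $\kappa(CD/s_k)$ blows up and the compactness argument above fails as stated. I would handle this separately. If $\ell=0$, I want to conclude $x_0\in\partial\Omega$ (so that $T(x_0)=0=\ell$ and the inequality is trivial). For this, use \eqref{eq:calibrated_2}-type estimates: on $[-s_k,0]$ the calibration gives $u(x_k)-u(\xi_k(-s_k))=\int_{-s_k}^0 L(\xi_k,\dot\xi_k)\,ds\geqslant -(\theta_1^*(1)+c_0)s_k$ and also $|x_k-\xi_k(-s_k)|\leqslant \int_{-s_k}^0|\dot\xi_k|\,ds\leqslant s_k\{\theta_2(CD/s_k)+\theta_1^*(1)+c_0\}$; the cleaner route is the upper action bound from quasiconvexity, $\int_{-s_k}^0 L(\xi_k,\dot\xi_k)\,ds = A^\Omega_{s_k}(\xi_k(-s_k),x_k)\leqslant s_k\theta_2(CD/s_k)$, combined with superlinearity of $\theta_2$ forcing $|x_k-\xi_k(-s_k)|\to 0$ along a subsequence (since otherwise $|x_k-\xi_k(-s_k)|$ bounded below and $s_k\to0$ make the minimizing speed, hence the action, tend to $+\infty$, contradicting the bound $A^\Omega_{s_k}\leqslant s_k\theta_2(CD/s_k)$ once one checks $s_k\theta_2(CD/s_k)$ stays bounded — which needs a short argument, or one instead bounds the action above using an {\em affine} competitor reparametrized from the quasiconvex path and estimates directly). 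Since $\xi_k(-s_k)\in\partial\Omega$ and $x_k\to x_0$, this yields $x_0\in\overline{\partial\Omega}=\partial\Omega$.

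Thus the proof splits into the nondegenerate case ($\ell>0$: Arzel\`a–Ascoli plus lower semicontinuity of the action) and the degenerate case ($\ell=0$: superlinearity forces the exit point to collapse onto $x_0$, putting $x_0$ on the boundary). I expect the technical heart to be making the degenerate case rigorous — in particular controlling the action $A^\Omega_{s_k}(\xi_k(-s_k),x_k)$ as $s_k\to 0$ — whereas the nondegenerate case is a routine application of Lemma \ref{eq:Lip_main}, Proposition \ref{pro_calibrated}, and Proposition \ref{compatible_u}.
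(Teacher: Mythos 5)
Your approach differs genuinely from the paper's. The paper obtains the equi-Lipschitz bound on $\dot\xi_k$ from the energy relation $H(\xi_k(s),p_k(s))=0$ along the calibrated curves together with the superlinearity of $H$ in $p$, which yields a bound independent of the exit time $T_k$; it then passes to the limit by continuous dependence for the Euler--Lagrange flow from the initial data $(x_k,\dot\xi_k(0))$. This handles the case $T_0=0$ automatically, since the bound never deteriorates. You instead use the \emph{a priori} estimate $\kappa(CD/s_k)$ from Lemma~\ref{eq:Lip_main} (which blows up as $s_k\to0$) and Arzel\`a--Ascoli, so you must treat $\ell=0$ separately. Your nondegenerate case ($\ell>0$) is fine: the restriction of a calibrated curve to $[-s_k,0]$ is a minimizer of $A^\Omega_{s_k}(\xi_k(-s_k),x_k)$, the Lipschitz bound is uniform, and lower semicontinuity of the action together with domination shows the limit is calibrated with $\xi(-\ell)\in\partial\Omega$.

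There is, however, a genuine gap in your degenerate case. The proposed upper bound $A^\Omega_{s_k}(\xi_k(-s_k),x_k)\leqslant s_k\theta_2(CD/s_k)$ does \emph{not} stay bounded as $s_k\to0$: by superlinearity, $s_k\theta_2(CD/s_k)=CD\cdot\theta_2(CD/s_k)/(CD/s_k)\to+\infty$, so your parenthetical hope that ``a short argument'' controls it is unavailable. The correct upper bound is the calibration identity $\int_{-s_k}^0 L(\xi_k,\dot\xi_k)\,ds=u(x_k)-u(\xi_k(-s_k))$, which is bounded by $\mathrm{Lip}(u)\cdot D$ since $u$ is Lipschitz on $\overline{\Omega}$ (Proposition~\ref{compatible_u}); combined with the lower bound $\int L\geqslant a|x_k-\xi_k(-s_k)|-(\theta_1^*(a)+c_0)s_k$ for each $a>0$, this forces $|x_k-\xi_k(-s_k)|\to0$, hence $x_0\in\partial\Omega$. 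Even simpler, and available in the paper already: Lemma~\ref{upper_bound} gives $d_{\partial\Omega}(x_k)\leqslant C_2\,T(x_k)$ for $x_k\in\Omega$, so $T(x_k)\to0$ directly implies $d_{\partial\Omega}(x_0)=0$, i.e.\ $x_0\in\partial\Omega$, ruling out the degenerate case for interior $x_0$ with no action estimate at all. With either of these fixes, your argument closes; as written, the degenerate case is not established.
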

\begin{proof}
	For any $x_0\in\overline{\Omega}$, let $T_0=\liminf_{x\to x_0}T(x)$. It suffices to show: for any $\{x_k\}_{k\in \N}\subset \overline{\Omega}$ such that $x_k\to x_0$ and $T(x_k)\to T_0$ as $k\to+\infty$, we have
	\begin{align}\label{lsc}
	T(x_0)\leqslant T_0.
	\end{align}
	Since $T(x)\equiv 0$ on $\partial \Omega$, then \eqref{lsc} is true if $x_0\in\partial \Omega$.
	If $x_0\in\Omega$, we suppose by contradiction that
	$T_0<T(x_0)$. Recall $T(x_k)=\inf_{\xi}T_{\xi}(x_k)$.
	Then for each $k\in\N$, there is $\xi_k:(-\infty,0]\to\overline{\Omega}$ which is $u$-calibrated with $\xi_k(0)=x_k$, such that
	$$
	T_{\xi_k}(x_k)<T(x_k)+\frac 1k.
	$$
	Let $T_k:=T_{\xi_k}(x_k)$. Then by the definition of $T_{\xi_k}(x_k)$, we have $\xi_k(-T_k)\in\partial \Omega$. Thus, for any $\varepsilon>0$, we have
	$$
	T(x_k)\leqslant T_k<T(x_k)+\frac 1k\leqslant T_0+\varepsilon
	$$
	for $k$ large enough. Since $\varepsilon>0$ is arbitrary, we get 
	$$
	T_0=\lim_{k\to +\infty}T_k.
	$$
	Recall that for each $k\in\N$, $\xi_k$ is a $u$-calibrated curve. Then,
	$$
	H(\xi_k(s),p_k(s))=0, \quad p_k(s)=\frac{\partial L}{\partial v}(\xi_k(s),\dot{\xi}_k(s)), \quad \forall s\in (-\infty,0].
	$$
	Since $\overline{\Omega}$ is compact, $H(x,p)$ is superlinear in $p$, then $\{|\dot{\xi}_k(0)|\}_{k\in \N}$ is bounded from above. Without loss of generality, suppose $(x_k,\dot{\xi}_k(0))\to (x_0,v_0)$ as $k\to +\infty$. Denote by $(\xi_\infty,\dot{\xi}_\infty)$ the solution of the Euler-Lagrange equation with $(x_0,v_0)$ as the initial condition. 
	By the classical theory of ordinary differential equations, $\xi_k$ converges to $\xi_\infty$ uniformly on $[-T_0-\varepsilon,0]$, and $\xi_\infty$ is still a $u$-calibrated curve. Since 
	$$
	|\xi_k(-T_k)-\xi_\infty(-T_0)|\leqslant |\xi_k(-T_k)-\xi_\infty(-T_k)|+|\xi_\infty(-T_k)-\xi_\infty(-T_0)|,
	$$
	then we have
	$\lim_{k\to+\infty}\xi_k(-T_k)=\xi_\infty(-T_0)$, which implies that $\xi_\infty(-T_0)\in\partial \Omega$. Therefore, we deduce that $T(x_0)\leqslant T_0$,
	a contradiction.
\end{proof}

The following result insures that the function defined in \eqref{eq:represent_formulae} is indeed the value function of an optimal exit time problem. 
\begin{Cor}\label{exit_time_prob}
For every $x\in\Omega$,  there exists $y\in\partial\Omega$ such that
$$
u(x)=g(y)+A^{\Omega}_{T(x)}(y,x).
$$	
\end{Cor}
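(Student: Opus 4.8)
The plan is to fix $x \in \Omega$, take a minimizer $y^* \in \partial\Omega$ of \eqref{eq:represent_formulae} at $x$, and extract from the definition of $T(x)$ a $u$-calibrated curve realizing (nearly) the exit time, then pass to the limit. By Lemma \ref{T_bound}, since ({\bf SH}4') holds, the minimizer $y^*$ satisfies $A^{\Omega}_{S}(y^*,x)=\Phi^{\Omega}_L(y^*,x)$ for some $S>0$ with $S\leqslant \frac{2C_1}{-c_\Omega(L)}|y^*-x|$. (If $y^*=x$, then $x\in\partial\Omega$, contradicting $x\in\Omega$, so $y^*\neq x$.) By Proposition \ref{pro_calibrated}(b)(i), any minimizer $\xi$ of $A^{\Omega}_S(y^*,x)$ is $u$-calibrated, with $\xi(-S)=y^*\in\partial\Omega$; hence $T(x)\leqslant T_\xi(x)\leqslant S<+\infty$. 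So the exit time is finite at every interior point, and the infimum in Definition \ref{exit_time_function} is over a nonempty family.

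**Key steps.** First I would show that the infimum defining $T(x)$ is attained. Take a minimizing sequence of $u$-calibrated curves $\xi_k$ with $\xi_k(0)=x$ and $T_{\xi_k}(x)\to T(x)$; without loss of generality each $\xi_k$ is defined on $[-T_k,0]$ with $T_k=T_{\xi_k}(x)$ and $\xi_k(-T_k)\in\partial\Omega$ (using Proposition \ref{pro_calibrated}(a) to restrict). Since each $\xi_k$ is $u$-calibrated, along it $H(\xi_k(s),p_k(s))=0$ with $p_k(s)=\partial_vL(\xi_k(s),\dot\xi_k(s))$; by superlinearity of $H$ in $p$ and compactness of $\overline\Omega$, the speeds $|\dot\xi_k(0)|$ are uniformly bounded, so $(x,\dot\xi_k(0))$ subconverges to some $(x,v_0)$. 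Letting $\xi_\infty$ solve the Euler--Lagrange equation with initial data $(x,v_0)$, continuous dependence on initial conditions gives $\xi_k\to\xi_\infty$ uniformly on $[-T(x)-\varepsilon,0]$ for any $\varepsilon>0$, and calibration passes to the limit (the calibration identity is closed under uniform convergence since $L$ is continuous and $\dot\xi_k\to\dot\xi_\infty$ uniformly on compacts by the ODE). Moreover $\xi_k(-T_k)\to\xi_\infty(-T(x))$, so $\xi_\infty(-T(x))\in\partial\Omega$ (this is exactly the argument already carried out in the lower semicontinuity lemma). Thus $y:=\xi_\infty(-T(x))\in\partial\Omega$ and the calibrated curve $\xi_\infty$ realizes $T(x)$.

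Second, from calibration of $\xi_\infty$ on $[-T(x),0]$ and $\xi_\infty(-T(x))\in\partial\Omega$ where $u=g$ (Proposition \ref{compatible_u}), I get
$$
u(x)=u(\xi_\infty(-T(x)))+\int_{-T(x)}^0 L(\xi_\infty,\dot\xi_\infty)\,ds = g(y)+\int_{-T(x)}^0 L(\xi_\infty,\dot\xi_\infty)\,ds \geqslant g(y)+A^{\Omega}_{T(x)}(y,x).
$$
Conversely, $g(y)+A^{\Omega}_{T(x)}(y,x)\geqslant g(y)+\Phi^{\Omega}_L(y,x)\geqslant \inf_{z\in\partial\Omega}\{g(z)+\Phi^{\Omega}_L(z,x)\}=u(x)$, and combining the two inequalities forces equality throughout, giving $u(x)=g(y)+A^{\Omega}_{T(x)}(y,x)$ as claimed.

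**Main obstacle.** The delicate point is the compactness/limit argument guaranteeing that $T(x)$ is attained: one must be sure that the minimizing $u$-calibrated curves do not "escape" to the boundary only in the limit in a degenerate way, and that the limiting curve is genuinely $u$-calibrated on all of $[-T(x),0]$ and hits $\partial\Omega$ precisely at time $-T(x)$. This is handled exactly as in the preceding lower-semicontinuity lemma — using that calibrated curves solve the Hamiltonian/Euler--Lagrange system, hence are determined by their initial $(x,\dot\xi_k(0))$, together with the uniform speed bound from $H(\xi_k,p_k)=0$ and superlinearity — so no genuinely new idea is needed, only care in assembling the pieces. A minor subtlety worth a line: one should observe $T(x)>0$ for $x\in\Omega$, which is immediate since any $u$-calibrated curve starting at $x\in\Omega$ stays near $x$ for small times and $d_{\partial\Omega}(x)>0$.
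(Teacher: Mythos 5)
Your proof is correct, and in fact it is more careful than what the paper offers. The paper's ``proof'' is a single sentence citing Lemma~\ref{T_bound} and Proposition~\ref{pro_calibrated}(b)(i). Those two results, as you observe in your first paragraph, produce a $u$-calibrated curve from some $y^*\in\partial\Omega$ to $x$, hence give the identity $u(x)=g(\xi(-t))+A^\Omega_t(\xi(-t),x)$ at the first exit time $t=T_\xi(x)$ of that particular curve, and show $T(x)\leqslant t<+\infty$. But this is not yet the statement of the Corollary: one must also show $t=T(x)$, i.e.\ that the infimum in Definition~\ref{exit_time_function} is \emph{attained} (and indeed the Corollary is logically equivalent to that attainment, since conversely a $y$ as in the Corollary yields a calibrated minimizer of $A^\Omega_{T(x)}(y,x)$ with exit time exactly $T(x)$). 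You correctly flag this as the real work and close it by the Arzel\`a--Ascoli argument already used in the preceding lower-semicontinuity lemma: a minimizing sequence of calibrated curves has a uniform speed bound from $H(\xi_k,p_k)=0$ plus superlinearity, so $(x,\dot\xi_k(0))$ subconverges, the Euler--Lagrange flow gives a limit curve, calibration passes to the limit, and the endpoints converge into $\partial\Omega$. Your closing double inequality via Proposition~\ref{compatible_u} is exactly right.

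In short: same route as the paper, but you explicitly supply the compactness/attainment step that the paper's one-line remark leaves implicit, reusing the argument of the lower-semicontinuity lemma as the paper clearly intends. Two small polishing points: (i) since $T_k\geqslant T(x)$ for every $k$, the curves $\xi_k$ are all defined on $[-T(x),0]$, so the cleanest phrasing is to prove uniform convergence there and handle the endpoint $\xi_k(-T_k)$ by the same triangle inequality as in the lemma (rather than on $[-T(x)-\varepsilon,0]$, where the $\xi_k$ need not a priori be defined); (ii) when you invoke $H(\xi_k(s),p_k(s))=0$ along calibrated curves, it is worth a word that this uses the standard fact $p_k(s)=\tfrac{\partial L}{\partial v}(\xi_k(s),\dot\xi_k(s))\in D^*u(\xi_k(s))$ together with $u$ being a locally semiconcave viscosity solution, the same ingredient the paper's lower-semicontinuity lemma relies on.
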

\noindent
The proof consists of a direct application of Lemma \ref{T_bound} and Proposition \ref{pro_calibrated} (b) (i).

\begin{Lem}\label{upper_bound}
Let $x\in\Omega$. If $y^*\in\partial\Omega$ is a minimizer of \eqref{eq:represent_formulae} and there exists $T>0$ such that $u(x)=g(y^*)+A^{\Omega}_T(y^*,x)$, then we have 
\begin{equation}\label{eq:Tx_dx}
	|y^*-x|\leqslant C_2T,
\end{equation}
where $C_2=\theta_1^*(C\theta_2(\kappa(1))+1)+c_0$, $C$ is the constant for which $\Omega$ is a $C$-quasiconvex domain, and $\kappa$ is the function obtained in Lemma \ref{eq:Lip_main}.
In particular, $d_{\partial\Omega}(x)\leqslant C_2T(x)$. Furthermore, 
if $\xi^*$ is a minimizer of $A^{\Omega}_T(y^*,x)$, then
\begin{align*}
	\operatorname*{ess\ sup}_{s\in[-T,0]}|\dot{\xi^*}(s)|\leqslant\kappa(CC_2).
\end{align*}
\end{Lem}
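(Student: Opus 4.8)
The plan is to reduce everything to the identity $A^{\Omega}_T(y^*,x)=\Phi^{\Omega}_L(y^*,x)$, which is forced by the hypotheses: since $y^*\in\partial\Omega$, the definition of $u$ in \eqref{eq:represent_formulae} gives
\[
u(x)\leqslant g(y^*)+\Phi^{\Omega}_L(y^*,x)\leqslant g(y^*)+A^{\Omega}_T(y^*,x)=u(x),
\]
so both middle inequalities are equalities. With this in hand I would sandwich $A^{\Omega}_T(y^*,x)$ between an upper bound that is \emph{linear} in $|y^*-x|$ and a lower bound of the shape ``$\lambda|y^*-x|$ minus a term linear in $T$'', and then optimise over $\lambda$ so that the superlinear growth of $L$ is ``inverted'' into the desired linear control of $|y^*-x|$ by $T$.

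For the upper bound I would exploit $C$-quasiconvexity of $\Omega$: there is $\gamma\in\Gamma^{0,\ell}_{y^*,x}(\overline{\Omega})$ with $|\dot\gamma|\equiv1$ and $\ell\leqslant C|y^*-x|$; reparametrising $\gamma$ so that its constant speed equals $\kappa(1)$ and using $L(x,v)\leqslant\theta_2(|v|)$ one gets $A^{\Omega}_t(y^*,x)\leqslant C\theta_2(\kappa(1))|y^*-x|$ for the corresponding $t>0$, hence $\Phi^{\Omega}_L(y^*,x)\leqslant C\theta_2(\kappa(1))|y^*-x|$. For the lower bound, let $\xi^*\in\Gamma^{-T,0}_{y^*,x}(\overline{\Omega})$ be a minimiser of $A^{\Omega}_T(y^*,x)$ (Proposition~\ref{Tonelli_straint}); using $L(x,v)\geqslant\theta_1(|v|)-c_0$ and the Fenchel inequality $\theta_1(r)\geqslant\lambda r-\theta_1^*(\lambda)$ (valid for all $\lambda,r\geqslant0$, with $\theta_1^*$ the Legendre transform of $\theta_1$),
\[
A^{\Omega}_T(y^*,x)=\int_{-T}^0 L(\xi^*,\dot\xi^*)\,ds\geqslant\lambda\int_{-T}^0|\dot\xi^*|\,ds-\bigl(\theta_1^*(\lambda)+c_0\bigr)T\geqslant\lambda|y^*-x|-\bigl(\theta_1^*(\lambda)+c_0\bigr)T,
\]
since $\int_{-T}^0|\dot\xi^*|\,ds\geqslant|\xi^*(0)-\xi^*(-T)|=|y^*-x|$. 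Combining the two bounds and choosing $\lambda=C\theta_2(\kappa(1))+1$ yields exactly $|y^*-x|\leqslant\bigl(\theta_1^*(C\theta_2(\kappa(1))+1)+c_0\bigr)T=C_2T$, i.e.\ \eqref{eq:Tx_dx}.

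The inequality $d_{\partial\Omega}(x)\leqslant C_2T(x)$ then follows by applying \eqref{eq:Tx_dx} to the minimiser $y\in\partial\Omega$ furnished by Corollary~\ref{exit_time_prob} (so that $u(x)=g(y)+A^{\Omega}_{T(x)}(y,x)$), together with $d_{\partial\Omega}(x)\leqslant|x-y|$; and the velocity bound is obtained by feeding \eqref{eq:Tx_dx} back into Lemma~\ref{eq:Lip_main}, since a minimiser $\xi^*$ of $A^{\Omega}_T(y^*,x)$ satisfies $\operatorname*{ess\,sup}_{s\in[-T,0]}|\dot\xi^*(s)|\leqslant\kappa(C|y^*-x|/T)\leqslant\kappa(CC_2)$ by monotonicity of $\kappa$. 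I do not expect a genuine obstacle: the heart of the argument is that the hypothesis pins $A^{\Omega}_T(y^*,x)$ down to $\Phi^{\Omega}_L(y^*,x)$, after which Fenchel duality converts the linear-in-$|y^*-x|$ upper bound into linear-in-$T$ control of $|y^*-x|$; the only points needing care are (i) that the reparametrised quasiconvex curve remains admissible (still $\overline{\Omega}$-valued, in $W^{1,1}$, with the correct endpoints), and (ii) the bookkeeping of constants so that the optimal $\lambda$ reproduces the stated $C_2$ (monotonicity of $\theta_1^*$, $\theta_2$ and $\kappa$ absorbs any slack, e.g.\ one may work with speed $1$ rather than $\kappa(1)$ in the connecting curve).
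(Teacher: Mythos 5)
Your proof is correct and follows essentially the same route as the paper: sandwich $A^{\Omega}_T(y^*,x)=\Phi^{\Omega}_L(y^*,x)$ between an upper bound that is linear in $|y^*-x|$ (via $C$-quasiconvexity) and a Fenchel--Young lower bound $\lambda|y^*-x|-(\theta_1^*(\lambda)+c_0)T$, then choose $\lambda=C\theta_2(\kappa(1))+1$, with the remaining two conclusions obtained from Corollary~\ref{exit_time_prob} and Lemma~\ref{eq:Lip_main} exactly as you indicate. The only cosmetic divergence is in the upper bound: the paper plugs the minimizer of $A^{\Omega}_t(y^*,x)$ into the speed estimate of Lemma~\ref{eq:Lip_main} and then sets $t=C|x-y^*|$, whereas you exhibit a quasiconvex competitor curve directly; as you already note, this only affects the bookkeeping of the constant and monotonicity of $\theta_1^*$, $\theta_2$, $\kappa$ absorbs the difference.
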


\begin{proof}
	Fix $x\in\Omega$. Let $y^*\in\partial\Omega$ be a minimizer of \eqref{eq:represent_formulae} and let $T>0$ be a constant such that $u(x)=g(y^*)+A^{\Omega}_T(y^*,x)$. Then, for any $t>0$ and any $y\in\partial\Omega$, we get
\begin{equation*}\label{eq:2}
	\begin{split}
		0\leqslant&\, g(y)+A^{\Omega}_t(y,x)-(g(y^*)+A^{\Omega}_{T}(y^*,x))\\
	=&\, g(y)-g(y^*)+A^{\Omega}_t(y,x)-A^{\Omega}_{T}(y^*,x).
	\end{split}
\end{equation*}
By taking $y=y^*$, for any $t>0$ and any minimizer $\eta\in\Gamma^{-t,0}_{y^*,x}(\overline{\Omega})$ of $A^\Omega_t(y^*,x)$, we have
\begin{align*}
	A^{\Omega}_{T}(y^*,x)\leqslant \int^0_{-t}L(\eta(s),\dot{\eta}(s))\ ds\leqslant \int^0_{-t}\theta_2(|\dot{\eta}(s)|)\ ds.
\end{align*}
By Lemma \ref{eq:Lip_main}, we have 

\begin{align*}
	\operatorname*{ess\ sup}_{s\in[0,t]}|\dot{\eta}(s)|\leqslant\kappa(C|x-y^*|/t).
\end{align*}
Thus, we have
 \begin{align*}
	A^{\Omega}_{T}(y^*,x)\leqslant \int^0_{-t}\theta_2(\kappa(C|x-y^*|/t))\ ds\leqslant t\theta_2\bigg(\kappa\big(\frac{C|x-y^*|}t\big)\bigg).
\end{align*}
Taking $t=C|x-y^*|$, then
\begin{equation}\label{eq:upper_bound}
A^{\Omega}_{T}(y^*,x)\leqslant C\theta_2(\kappa(1))|x-y^*|.
\end{equation}
On the other hand, condition (L2) also implies that, for each $k>0$,
\begin{equation}\label{eq:lower_bound2}
	A^{\Omega}_{T}(y^*,x)\geqslant k|y^*-x|-(\theta^*_1(k)+c_0)T.
\end{equation}
Let $k=C\theta_2(\kappa(1))+1$.
Combining \eqref{eq:upper_bound} and \eqref{eq:lower_bound2}, we have
$$
|x-y^*|\leqslant C_2T,
$$
where $C_2=\theta_1^*(C\theta_2(\kappa(1))+1)+c_0$.
In particular, by Corollary \ref{exit_time_prob} and  \eqref{eq:Tx_dx}, we get $d_{\partial\Omega}(x)\leqslant C_2 T(x)$. The last conclusion of the lemma is a direct consequence of Lemma \ref{eq:Lip_main} and \eqref{eq:Tx_dx}.
\end{proof}

\subsection{Local semiconcavity of $u$}

In this part, we begin with  a local semiconcavity estimate.

For any $\rho>0$, let $\Omega_\rho=\{x\in\Omega: d_{\partial \Omega}(x)\geqslant \rho\}$. Since $T(x)$ is lower semicontinuous on $\overline{\Omega}$, then 
$$
T_\rho:=\inf_{x\in\Omega_\rho}T(x)
$$
is well defined.
\begin{Lem}[Local semiconcavity]\label{semi_value}
For any $\rho>0$,  any $x\in\Omega_\rho$ and any $z\in\R^n$ with $|z|<\frac \rho 8$, we have

\begin{equation}\label{eq:local_semiconcaveity}
	u(x+z)+u(x-z)-2u(x)\leqslant\frac{\bar{C}}{\rho}|z|^2,
\end{equation}
for some constant $\bar{C}>0$ independent of $x$ and $z$.
\end{Lem}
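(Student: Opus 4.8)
The strategy is the classical ``doubling with a suboptimal competitor'' argument for semiconcavity of value functions, adapted to the exit-time representation. Fix $\rho>0$, $x\in\Omega_\rho$, and $z$ with $|z|<\rho/8$. By Corollary~\ref{exit_time_prob} there exists $y^*\in\partial\Omega$ such that $u(x)=g(y^*)+A^{\Omega}_{T(x)}(y^*,x)$; let $\xi^*\in\Gamma^{-T(x),0}_{y^*,x}(\overline\Omega)$ be a minimizer of $A^\Omega_{T(x)}(y^*,x)$. Write $T=T(x)$ for short. By Lemma~\ref{upper_bound} the minimizer $\xi^*$ is equi-Lipschitz, $\operatorname*{ess\,sup}|\dot\xi^*|\leqslant\kappa(CC_2)=:\Lambda$, and by \eqref{T-up} we have $T\leqslant B$; also, since $x\in\Omega_\rho$, Lemma~\ref{upper_bound} (via $d_{\partial\Omega}(x)\leqslant C_2 T(x)$) gives a lower bound $T\geqslant\rho/C_2=:\tau_\rho>0$, independent of $x$. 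These uniform bounds on $T$ from above and below are what make the constant $\bar C$ depend only on $\rho$.

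Next I build two competitor curves for $u(x\pm z)$ by perturbing $\xi^*$ near its endpoint $x$ while keeping the other endpoint $y^*\in\partial\Omega$ fixed. Choose a time $\sigma\in[\tau_\rho/2,\tau_\rho]$ (say $\sigma=\min\{T,\tau_\rho\}/2$, so $\sigma\in[\tau_\rho/4,\tau_\rho/2]$) and define, for $s\in[-T,-\sigma]$, $\xi^{\pm}(s)=\xi^*(s)$, and for $s\in[-\sigma,0]$, $\xi^{\pm}(s)=\xi^*(s)\pm\frac{s+\sigma}{\sigma}\,z$ — i.e. the curves that coincide with $\xi^*$ until time $-\sigma$ and then drift linearly by $\pm z$ to reach $x\pm z$ at time $0$. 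One must check $\xi^{\pm}(s)\in\overline\Omega$: since $\xi^*(s)\in\overline\Omega$ and the perturbation has size $\le|z|<\rho/8$, while for $s\in[-\sigma,0]$ the curve $\xi^*$ stays within distance $\Lambda\sigma\le\Lambda\tau_\rho/2$ of $x$; so after shrinking $\tau_\rho$ if necessary (equivalently, choosing $\sigma$ proportional to $\rho$ with a small enough constant) we keep $\xi^*(s)\in\Omega_{\rho/4}$ on $[-\sigma,0]$ and hence $\xi^{\pm}(s)\in\Omega_{\rho/8}\subset\Omega$. Then $\xi^{\pm}\in\Gamma^{-T,0}_{y^*,x\pm z}(\overline\Omega)$, so
\begin{align*}
  u(x+z)+u(x-z)-2u(x)
  &\leqslant g(y^*)+A^\Omega_T(y^*,x+z)+g(y^*)+A^\Omega_T(y^*,x-z)-2u(x)\\
  &\leqslant \int_{-T}^{0}\!\big[L(\xi^+,\dot\xi^+)+L(\xi^-,\dot\xi^-)-2L(\xi^*,\dot\xi^*)\big]\,ds.
\end{align*}
On $[-T,-\sigma]$ the integrand is identically zero. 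On $[-\sigma,0]$, $\xi^{\pm}=\xi^*\pm w$ and $\dot\xi^{\pm}=\dot\xi^*\pm\dot w$ with $w(s)=\frac{s+\sigma}{\sigma}z$, $|w|\le|z|$, $|\dot w|=|z|/\sigma\le 4|z|/\tau_\rho$. A second-order Taylor expansion of $L$ in both variables, using the $C^2$ bound on $L$ on the compact set $\{(\zeta,v):|\zeta-x|\le\rho,\ |v|\le\Lambda+4|z|/\tau_\rho\}$, gives the integrand $\le M(|w|^2+|\dot w|^2)$ for a constant $M$ depending only on $\rho$ (through $\Lambda$, $\tau_\rho$ and the region). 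Hence
\[
  u(x+z)+u(x-z)-2u(x)\ \le\ M\!\int_{-\sigma}^{0}\!\Big(|z|^2+\tfrac{|z|^2}{\sigma^2}\Big)ds\ \le\ M\Big(\sigma+\tfrac1\sigma\Big)|z|^2\ \le\ \frac{\bar C}{\rho}\,|z|^2,
\]
since $\sigma\asymp\rho$ and, without loss of generality, $\rho\le$ diam$(\Omega)$ so the $\sigma$ term is dominated.

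The main obstacle — and the only genuinely delicate point — is the containment $\xi^{\pm}(s)\in\overline\Omega$ for the perturbed curves: one needs the portion of $\xi^*$ on $[-\sigma,0]$ to stay comfortably inside $\Omega$ so that a $\pm z$ drift does not leave $\overline\Omega$. This is precisely why the interior-distance hypothesis $x\in\Omega_\rho$, the a priori speed bound $|\dot\xi^*|\le\Lambda$ from Lemma~\ref{upper_bound}, and the uniform lower bound $T(x)\ge\rho/C_2$ all enter, and why $|z|$ must be restricted to $|z|<\rho/8$. Once $\sigma$ is fixed as a small fixed multiple of $\rho$ (small enough that $\Lambda\sigma+|z|<\rho$), everything else is the routine Taylor estimate above. (Strictly, one also needs $T(x)\ge\sigma$, which holds because $T(x)\ge\rho/C_2$ and $\sigma$ is chosen $\le\rho/C_2$; if instead one prefers not to assume a two-sided bound on $T$, one can alternatively perturb near the boundary endpoint, but perturbing near $x$ is cleaner here since $x\in\Omega_\rho$.)
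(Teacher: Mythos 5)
Your proof is correct and follows essentially the same strategy as the paper's: perturb the optimal arc $\xi^*$ linearly by $\pm z$ over a terminal time window of length $\asymp\rho$, use $x\in\Omega_\rho$ together with the a priori speed bound and the lower bound $T(x)\geqslant\rho/C_2$ to keep the perturbed competitors inside $\Omega$, and then Taylor-expand $L$ to second order to get the $|z|^2/\rho$ estimate. The only (immaterial) differences are in the specific choice of the perturbation time $\sigma$ versus the paper's $\tau=\min\{\rho/(4K),T_\rho\}$ and in which part of Lemma~\ref{upper_bound}/Lemma~\ref{eq:Lip_main} is invoked for the Lipschitz bound on $\dot\xi^*$.
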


\begin{proof}
	Fix $x\in\Omega_\rho$, by Corollary \ref{exit_time_prob}, there is $y^*\in\partial\Omega$ such that 
	$$
	u(x)=g(y^*)+A_{T(x)}^\Omega(y^*,x).
	$$
	From \eqref{T-up}, we have
	$$
	0<T(x)\leqslant B.
	$$
	Let $\xi^*$ be a minimizer of $A_{T(x)}^\Omega(y^*,x)$.
	For any $z\in\R^n$ such that $x\pm z\in\Omega$, any $t^\pm>0$ and any $y^\pm\in\partial \Omega$, we have
	   \begin{align*}
		&u(x+z)+u(x-z)-2u(x)\\
		\leqslant&\, g(y^+)+g(y^-)-2g(y^*)+  A^{\Omega}_{t^+}(y^+,x+z)+A^{\Omega}_{t^-}(y^-,x-z)-2A^{\Omega}_{T(x)}(y^*,x).
	\end{align*}
	Taking $t^-=t^+=T(x)$ and $y^+=y^-=y^{*}$ in the inequality above, we obtain
	\begin{equation}\label{eq:semiconcave_1}
	\begin{split}
		&u(x+z)+u(x-z)-2u(x)\\
		\leqslant& A^{\Omega}_{T}(y^*,x+z)+A^{\Omega}_{T}(y^*,x-z)-2\int^0_{-T}L(\xi^*,\dot{\xi}^*)\ ds.
	\end{split}
	\end{equation}
	
	By Lemma \ref{eq:Lip_main}, we have
	$$
	 \operatorname*{ess\ sup}_{s\in[-t,0]}|\dot{\xi^*}(s)|\leqslant\kappa(CD/T(x)).
	$$
Thus, we have
    $$
	 \operatorname*{ess\ sup}_{s\in[-t,0]}|\dot{\xi^*}(s)|\leqslant\kappa(CD/T_\rho)=:K.
	$$
	Let $\tau=\min\{\frac \rho {4K},T_\rho\}$.
	We define two curves connecting $y^*$ and $x\pm z$ by
	$$
	\xi^{\pm}(s)=
	\begin{cases}
    \xi^*(s),&  s\in[-T(x),-\tau],\\
    \xi^*(s)\pm\frac {\tau+s}{\tau}z,& s\in(-\tau,0].
	\end{cases}
	$$
	We assert that $\xi^\pm(s)\subset \Omega$ for all $s\in[-T(x),0]$.  In fact, for each $s\in [-\tau,0]$, we get
	\begin{align*}
	|\xi^\pm(s)-x|&=|\xi^\pm(0)-\int_s^0\dot{\xi}^\pm(\sigma)d\sigma-x|\\
	&\leqslant |z|+	|\int_s^0(\dot{\xi}^*(\sigma)\pm\frac z \tau)d\sigma|\\
	&\leqslant 2|z|+\tau K\\
	&\leqslant \frac \rho 2.
	\end{align*}
 Thus, the above assertion is true. Hence, we have
 \begin{align*}
		&u(x+z)+u(x-z)-2u(x)\\
		\leqslant& \int^0_{-T(x)}L(\xi^+(s),\dot{\xi}^+(s))+L(\xi^-(s),\dot{\xi}^-(s))-2L(\xi^*(s),\dot{\xi}^*(s))\ ds\\
		=&\int^0_{-\tau}\bigg\{L\Big(\xi^*(s)+\frac {s+\tau}{\tau}z,\dot{\xi}^*(s)+\frac {z}{\tau}\Big)\\
		&\ \ \ \ \ \ \ \ +L\Big(\xi^*(s)-\frac {s+\tau}{\tau}z,\dot{\xi}^*(s)-\frac {z}{\tau}\Big)-2L(\xi^*(s),\dot{\xi}^*(s))\bigg\}\ ds\\
		\leqslant&\int^0_{-\tau}C_3\bigg(|\frac{s+\tau}{\tau}z|^2+|\frac{z}{\tau}|^2\bigg)\ ds\\
		\leqslant& C_4(\tau+\frac 1 \tau)|z|^2,
	\end{align*}
	where the positive constants $C_3$ and $C_4$ depend only on $K$ and $\rho$. Note that 
	$$
	\frac 1\tau=\max\left\{\frac{4K} \rho,\frac 1{T_\rho}\right\}.
	$$
	By Lemma \ref{upper_bound}, we have 
	$$
	\rho\leqslant d_{\partial \Omega}(x)\leqslant C_2T(x), \quad \forall x\in \Omega_\rho,
	$$
	which implies that 
	$$
	\frac 1{\tau}\leqslant \frac{4K} \rho+\frac 1{T_\rho}\leqslant \frac{4K} \rho+\frac {C_2}{\rho}=:\frac{C_5}{\rho}.
	$$
	Therefore,  we have
	$$
	u(x+z)+u(x-z)-2u(x)\leqslant C_4\left(T(x)+\frac {C_5}{\rho}\right)|z|^2\leqslant C_4\frac{BD+C_5}{\rho}|z|^2.
	$$
	\end{proof}

\begin{Cor}
	$u$ is a viscosity solution of Dirichlet problem \eqref{eq:HJ_BP}.
\end{Cor}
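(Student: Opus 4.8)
The plan is to verify that $u$ satisfies the two defining properties of a viscosity solution of $H(x,Du)=0$ in the open set $\Omega$; the boundary condition $u|_{\partial\Omega}=g$ and the continuity of $u$ on $\overline\Omega$ are already contained in Proposition~\ref{compatible_u}. (Alternatively one may simply invoke the classical Hamilton--Jacobi--Bellman theory for exit time problems, e.g. \cite{Bardi-Capuzzo-Dolcetta,Cannarsa-Sinestrari}, since Corollary~\ref{exit_time_prob} identifies $u$ with the value function of such a problem; but the weak KAM style argument below is self-contained.) Everything rests on two ingredients established above: that $u$ is dominated by $L$ (Proposition~\ref{compatible_u}), which will yield the subsolution inequality, and that through every interior point there passes a $u$-calibrated curve of positive length (Corollary~\ref{exit_time_prob} together with Proposition~\ref{pro_calibrated}), which will yield the supersolution inequality.

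For the subsolution property I would first show that $H(x,Du(x))\leqslant 0$ at every point $x\in\Omega$ where $u$ is differentiable: inserting the segment $\gamma(s)=x+sv$ (which lies in $\Omega$ for $s$ small) into the domination inequality $u(\gamma(t))-u(\gamma(0))\leqslant\Phi^\Omega_L(\gamma(0),\gamma(t))\leqslant\int_0^t L(\gamma,\dot\gamma)\,ds$, dividing by $t$ and letting $t\to0^+$ gives $\langle Du(x),v\rangle\leqslant L(x,v)$ for every $v\in\R^n$, hence $H(x,Du(x))=\sup_{v}\big(\langle Du(x),v\rangle-L(x,v)\big)\leqslant 0$. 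Now let $x_0\in\Omega$ and $\phi\in C^1$ be such that $u-\phi$ has a local maximum at $x_0$, so $D\phi(x_0)\in D^+u(x_0)$. Since $u$ is locally semiconcave in $\Omega$ by Lemma~\ref{semi_value}, it is standard (see \cite{Cannarsa-Sinestrari}) that $D^+u(x_0)$ is the closed convex hull of the reachable gradients $D^\ast u(x_0)$; each $p\in D^\ast u(x_0)$ is a limit $p=\lim_k Du(x_k)$ along differentiability points $x_k\to x_0$, so $H(x_0,p)=\lim_k H(x_k,Du(x_k))\leqslant 0$ by continuity of $H$, and by convexity of $H$ in $p$ (condition (H1)) the inequality $H(x_0,\cdot)\leqslant 0$ persists on $D^+u(x_0)$; in particular $H(x_0,D\phi(x_0))\leqslant 0$.

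For the supersolution property, fix $x_0\in\Omega$ and $p\in D^-u(x_0)$ (there is nothing to prove if $D^-u(x_0)=\varnothing$). By Corollary~\ref{exit_time_prob} there is $y\in\partial\Omega$ with $u(x_0)=g(y)+A^\Omega_{T(x_0)}(y,x_0)$; comparing with the definition of $u$ shows that $y$ is a minimizer for \eqref{eq:represent_formulae} at $x_0$ and that $A^\Omega_{T(x_0)}(y,x_0)=\Phi^\Omega_L(y,x_0)$, while Lemma~\ref{upper_bound} gives $T(x_0)\geqslant d_{\partial\Omega}(x_0)/C_2>0$ since $x_0\in\Omega$. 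Hence, by Proposition~\ref{pro_calibrated}(b)(i), any minimizer $\xi\in\Gamma^{-T(x_0),0}_{y,x_0}(\overline\Omega)$ of $A^\Omega_{T(x_0)}(y,x_0)$ is $u$-calibrated, so $u(x_0)-u(\xi(-s))=\int_{-s}^0 L(\xi,\dot\xi)\,d\tau$ for $s\in[0,T(x_0)]$. Because $x_0$ is interior, $\xi$ remains in $\Omega$ on a left-neighbourhood of $0$ and is a $C^2$ extremal there by Proposition~\ref{Tonelli_straint}, so $v_0:=\dot\xi(0)$ is well defined. Using $p\in D^-u(x_0)$, the Lipschitz continuity of $\xi$, and $x_0-\xi(-s)=\int_{-s}^0\dot\xi\,d\tau$, one gets $\int_{-s}^0\big(L(\xi,\dot\xi)-\langle p,\dot\xi\rangle\big)\,d\tau\leqslant o(s)$ as $s\to0^+$; dividing by $s$ and passing to the limit yields $L(x_0,v_0)-\langle p,v_0\rangle\leqslant 0$, whence $H(x_0,p)\geqslant\langle p,v_0\rangle-L(x_0,v_0)\geqslant 0$.

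The delicate point is precisely this last step: one must know that a $u$-calibrated curve genuinely reaches the interior point $x_0$ over a time interval of strictly positive length — which is where $T(x_0)>0$, via Lemma~\ref{upper_bound}, is used — and that near its endpoint it is regular enough for $\dot\xi(0)$ to exist, so that the calibration identity can legitimately be differentiated at $s=0$. Everything else is a routine assembly of Proposition~\ref{compatible_u}, Lemma~\ref{semi_value}, Corollary~\ref{exit_time_prob}, Proposition~\ref{pro_calibrated} and Proposition~\ref{Tonelli_straint}.
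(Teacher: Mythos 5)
Your proof is correct. The paper disposes of the corollary in two lines by asserting that $u$ solves $H(x,Du)=0$ almost everywhere (via Proposition~\ref{compatible_u} and Corollary~\ref{exit_time_prob}) and then invoking the standard fact that a locally semiconcave a.e.\ solution of a convex Hamilton--Jacobi equation is a viscosity solution; what you have done is unpack exactly that fact in this concrete setting. Your subsolution argument (domination gives $\langle Du,v\rangle\leqslant L$ at differentiability points, hence $H\leqslant 0$ on $D^\ast u$, hence on $D^+u=\mathrm{co}\,D^\ast u$ by semiconcavity and convexity of $H(x,\cdot)$) is precisely the mechanism behind the paper's appeal to Lemma~\ref{semi_value}. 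Your supersolution argument differs slightly in presentation: the textbook route observes that for a semiconcave $u$, $D^-u(x_0)\neq\varnothing$ forces differentiability at $x_0$ with $Du(x_0)\in D^\ast u(x_0)$, and then uses the a.e.\ \emph{equality}; you instead differentiate the calibration identity along the backward extremal at $s=0$ to directly bound $H(x_0,p)$ from below for an arbitrary $p\in D^-u(x_0)$, without passing through the differentiability dichotomy. Both variants rest on the same two ingredients — domination (Proposition~\ref{compatible_u}) and existence of a calibrated extremal of positive length into every interior point (Corollary~\ref{exit_time_prob}, with $T(x_0)>0$ from Lemma~\ref{upper_bound}) — so this is the same approach made explicit, with a marginally more self-contained supersolution step.
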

\begin{proof}
	By Proposition \ref{compatible_u} and Corollary \ref{exit_time_prob}, it is straightforward to check that $u$ satisfies equation $H(x,Du)=0$ a.e. on $\Omega$, which together with the local semiconcavity of $u$ obtained in Lemma \ref{semi_value}, implies that $u$ is a viscosity solution of Dirichlet problem \eqref{eq:HJ_BP}.
\end{proof}

\begin{Pro}\label{key1}
	For any $x\in\Omega$, $p\in D^*u(x)$ if and only if there exists a $u$-calibrated curve  $\xi=\xi_{x,p}:[-T_{x,p},0]\to\overline{\Omega}$ such that $\xi_{x,p}(-T_{x,p})\in\partial\Omega$, $\xi_{x,p}(0)=x$ and $p=\frac{\partial L}{\partial v}(\xi_{x,p}(0),\dot{\xi}_{x,p}(0))$.
\end{Pro}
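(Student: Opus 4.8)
\emph{Proof plan.} I would prove the two implications separately, using the following facts already at hand: the exit‑time representation $u(x)=g(y)+A^{\Omega}_{T(x)}(y,x)$ with $y\in\partial\Omega$ (Corollary \ref{exit_time_prob}); that this forces $A^{\Omega}_{T(x)}(y,x)=\Phi^{\Omega}_L(y,x)$, so the corresponding minimizers of $A^{\Omega}_{T(x)}(y,x)$ are $u$-calibrated (Proposition \ref{pro_calibrated}(b)(i)) and uniformly Lipschitz with $d_{\partial\Omega}(x)\leqslant C_2T(x)$ (Lemma \ref{upper_bound}); the $C^1$-regularity of $A^{\Omega}_t$ on small balls for small times, together with the gradient formula \eqref{eq:diff_A_t} (Proposition \ref{C11_A_t}); that $u$ is dominated by $L$, agrees with $g$ on $\partial\Omega$ (Proposition \ref{compatible_u}) and is locally semiconcave (Lemma \ref{semi_value}), so that Proposition \ref{basic_facts_of_superdifferential} applies. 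Note also that under ({\bf SH}4$'$) one has $L(\cdot,0)>0$ on $\overline\Omega$, so no $u$-calibrated curve can be locally constant.

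For the implication ``$\Leftarrow$'', let $\xi$ be a $u$-calibrated curve with $\xi(0)=x$, $\xi(-T_{x,p})\in\partial\Omega$ and $p=\tfrac{\partial L}{\partial v}(\xi(0),\dot\xi(0))$, and set $p(s)=\tfrac{\partial L}{\partial v}(\xi(s),\dot\xi(s))$. Since $x\in\Omega$ and $\xi$ is Lipschitz, $\xi(s)\in\Omega$ for all $s$ in some left neighborhood of $0$; fix such an $s<0$. Restricting $\xi$ to a short interval $[s-\varepsilon,s]$, this restriction is the unique minimizer of $A^{\Omega}_\varepsilon(\xi(s-\varepsilon),\xi(s))$ (it is $u$-calibrated by Proposition \ref{pro_calibrated}(a) and an admissible competitor), so for $\varepsilon$ small Proposition \ref{C11_A_t}(c) and \eqref{eq:diff_A_t} make $y\mapsto A^{\Omega}_\varepsilon(\xi(s-\varepsilon),y)$ of class $C^1$ near $\xi(s)$ with gradient $p(s)$ there; from $u(y)\leqslant u(\xi(s-\varepsilon))+A^{\Omega}_\varepsilon(\xi(s-\varepsilon),y)$ with equality at $y=\xi(s)$ we get $p(s)\in D^+u(\xi(s))$. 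Restricting instead $\xi$ to $[s,0]$ gives the unique minimizer of $A^{\Omega}_{-s}(\xi(s),x)$, and for $|s|$ small $y\mapsto A^{\Omega}_{-s}(y,x)$ is $C^1$ near $\xi(s)$ with gradient $-p(s)$ there by \eqref{eq:diff_A_t}; from $u(y)\geqslant u(x)-A^{\Omega}_{-s}(y,x)$ with equality at $y=\xi(s)$ we get $p(s)\in D^-u(\xi(s))$. By Proposition \ref{basic_facts_of_superdifferential}(c)--(d), $u$ is differentiable at $\xi(s)$ with $Du(\xi(s))=p(s)$. Letting $s\to0^-$ along values with $\xi(s)\neq x$ (which exist, as $\xi$ is not locally constant), Tonelli's theorem (Proposition \ref{Tonelli_straint}) gives $\dot\xi(s)\to\dot\xi(0)$, hence $Du(\xi(s))=p(s)\to p$, so $p\in D^{*}u(x)$.

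For ``$\Rightarrow$'', take $x_k\to x$, $x_k\neq x$, with $u$ differentiable at $x_k$ and $Du(x_k)\to p$. By Corollary \ref{exit_time_prob} pick $y_k\in\partial\Omega$ with $u(x_k)=g(y_k)+A^{\Omega}_{T(x_k)}(y_k,x_k)$; then $A^{\Omega}_{T(x_k)}(y_k,x_k)=\Phi^{\Omega}_L(y_k,x_k)$, so any minimizer $\xi_k\in\Gamma^{-T(x_k),0}_{y_k,x_k}(\overline\Omega)$ is $u$-calibrated (Proposition \ref{pro_calibrated}(b)(i)) and has $|\dot\xi_k|\leqslant\kappa(CC_2)$ a.e., independently of $k$ (Lemma \ref{upper_bound}). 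Since $d_{\partial\Omega}(x_k)/C_2\leqslant T(x_k)\leqslant B$ and $d_{\partial\Omega}(x_k)\to d_{\partial\Omega}(x)>0$, after passing to a subsequence $T(x_k)\to T_*\in(0,B]$, $y_k\to y_*\in\partial\Omega$, and by Arzel\`a--Ascoli $\xi_k\to\xi$ uniformly on $[-T_*,0]$, with $\xi$ Lipschitz, $\xi(0)=x$, $\xi(-T_*)=y_*\in\partial\Omega$. The limit $\xi$ is $u$-calibrated: for $-T_*<a<b\leqslant0$, domination gives $u(\xi(b))-u(\xi(a))\leqslant\int_a^bL(\xi,\dot\xi)\,ds$, while passing to the limit in $u(\xi_k(b))-u(\xi_k(a))=\int_a^bL(\xi_k,\dot\xi_k)\,ds$ and using lower semicontinuity of the action gives the reverse inequality (then let $a\to-T_*$). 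Finally, arguing as in the previous paragraph, $p_k(0):=\tfrac{\partial L}{\partial v}(x_k,\dot\xi_k(0))\in D^+u(x_k)=\{Du(x_k)\}$, so $p_k(0)\to p$ and $\dot\xi_k(0)=\tfrac{\partial H}{\partial p}(x_k,p_k(0))\to\tfrac{\partial H}{\partial p}(x,p)$. On a fixed interval $[-\delta,0]$ the $\xi_k$ stay in $\Omega$ (as $d_{\partial\Omega}(x_k)$ is bounded below and the $\xi_k$ are equi-Lipschitz) and $(\xi_k,p_k)$ solve the Hamiltonian system on $\{H=0\}$ with uniformly bounded momenta; by continuous dependence on initial data they converge on $[-\delta,0]$ to the Hamiltonian trajectory issued from $(x,p)$, which therefore coincides with $\xi$. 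Hence $\dot\xi(0)=\tfrac{\partial H}{\partial p}(x,p)$, i.e. $\tfrac{\partial L}{\partial v}(x,\dot\xi(0))=p$, and $\xi$ together with $T_{x,p}:=T_*$ has all the required properties.

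The delicate points are, in the first implication, the two uses of the small‑time $C^1$-regularity of $A^{\Omega}_t$: one must verify, from the local Lipschitz bound on $\xi$ and the positive distance of the relevant sub‑arc endpoints to $\partial\Omega$, that the time is small enough and the base point lies in the ball for which Proposition \ref{C11_A_t}(c) and \eqref{eq:diff_A_t} are valid; and, in the second implication, the identification of $\dot\xi(0)$ — uniform convergence $\xi_k\to\xi$ does not control $\dot\xi_k(0)$, so the argument must be routed through the Hamiltonian flow and continuous dependence on initial conditions, which I expect to be the main technical obstacle.
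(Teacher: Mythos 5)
Your proof is correct and takes essentially the same route as the paper's: for $p\in D^*u(x)\Rightarrow$ existence of a calibrated curve, both you and the paper pass through differentiability points $x_k\to x$ with $Du(x_k)\to p$, obtain the calibrated curves to the boundary via the exit-time representation, and identify the limit arc with the Hamiltonian trajectory issued from $(x,p)$ by continuous dependence (the paper does this by invoking Lemma~\ref{pre} directly and converging dual arcs; you re-derive the needed identity $p_k(0)=Du(x_k)$ inline and route the argument through Arzel\`a--Ascoli plus lower semicontinuity before the flow step — same content, slightly longer). For the converse the paper simply refers to \cite{Cannarsa-Sinestrari}, Theorem 8.4.14, whereas you supply a complete argument via the small-time $C^1$-regularity of $A^\Omega_t$ (Proposition~\ref{C11_A_t}(c), \eqref{eq:diff_A_t}) and the double inclusion $p(s)\in D^+u(\xi(s))\cap D^-u(\xi(s))$, together with the observation that ({\bf SH}4$'$) forces $L(\cdot,0)>0$ so calibrated curves are nowhere stationary; this is exactly the Cannarsa--Sinestrari argument made self-contained, so it is a useful complement rather than a departure.
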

In order to prove this proposition, we provide a preliminary result first.

\begin{Lem}\label{pre}
	If $u$ is differentiable at $x\in\Omega$, then there is a unique  $u$-calibrated curve $\gamma_x:[-T(x),0]\to \overline{\Omega}$ with $\gamma_x(0)=x$ and $Du(x)=\frac{\partial L}{\partial v}(x,\dot{\gamma}_x(0))$.
\end{Lem}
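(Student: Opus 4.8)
The plan is to obtain $\gamma_x$ from the optimal exit-time representation of $u$, to identify its initial momentum by exhibiting a $C^{1,1}$ function that touches $u$ from above at $x$, and to get uniqueness from uniqueness for the Hamiltonian flow, after ruling out that such a calibrated curve meets $\partial\Omega$ before time $-T(x)$.

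\emph{Existence and the momentum identity.} By Corollary \ref{exit_time_prob} there is $y\in\partial\Omega$ with $u(x)=g(y)+A^{\Omega}_{T(x)}(y,x)$; comparing with the definition \eqref{eq:represent_formulae} of $u$ shows that $y$ is a minimizer of \eqref{eq:represent_formulae} at $x$ and that $A^{\Omega}_{T(x)}(y,x)=\Phi^{\Omega}_L(y,x)$, so by Proposition \ref{pro_calibrated}(b)(i) every minimizer of $A^{\Omega}_{T(x)}(y,x)$ --- one exists by Proposition \ref{Tonelli_straint} --- is $u$-calibrated. Let $\gamma_x:[-T(x),0]\to\overline{\Omega}$ be such a curve, with $\gamma_x(0)=x$, $\gamma_x(-T(x))=y$, and, by Lemma \ref{upper_bound}, $|\dot\gamma_x|\leqslant K:=\kappa(CC_2)$ a.e. Since $x\in\Omega$, continuity gives $\gamma_x(-h)\in\Omega$ for $h>0$ small, and then Proposition \ref{pro_calibrated}(a) together with domination (Proposition \ref{compatible_u}) shows that $\gamma_x|_{[-h,0]}$ is a minimizer of $A^{\Omega}_h(\gamma_x(-h),x)$. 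Put $\psi(z):=u(\gamma_x(-h))+A^{\Omega}_h(\gamma_x(-h),z)$: domination and $\Phi^{\Omega}_L\leqslant A^{\Omega}_h$ give $\psi\geqslant u$ near $x$, while $\psi(x)=u(x)$; and for $h$ small (so that $d_{\partial\Omega}(\gamma_x(-h))\to d_{\partial\Omega}(x)>0$ and $|x-\gamma_x(-h)|\leqslant Kh$) and any $\lambda>K$, Proposition \ref{C11_A_t}(c) makes $\psi$ of class $C^{1,1}$ near $x$ with $D\psi(x)=\frac{\partial L}{\partial v}(\gamma_x(0),\dot\gamma_x(0))$ by \eqref{eq:diff_A_t}. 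As $u$ is differentiable at the interior point $x$ and $\psi-u$ is nonnegative near $x$ and vanishes at $x$, we get $Du(x)=D\psi(x)=\frac{\partial L}{\partial v}(x,\dot\gamma_x(0))$.

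\emph{Uniqueness.} First, no $u$-calibrated curve $\xi:[-T(x),0]\to\overline{\Omega}$ with $\xi(0)=x$ can meet $\partial\Omega$ at a time $s^*\in(-T(x),0]$: necessarily $s^*<0$ since $\xi(0)=x\in\Omega$, and $\xi|_{[s^*,0]}$ is $u$-calibrated by Proposition \ref{pro_calibrated}(a), so by the definition of the exit time $T(x)\leqslant-s^*<T(x)$, which is absurd. Hence such a curve stays in $\Omega$ on $(-T(x),0]$ and, being on each $[a,0]$ a minimizer of $A^{\Omega}_{-a}(\xi(a),x)$, is there a $C^2$ extremal whose dual arc $p(s)=\frac{\partial L}{\partial v}(\xi(s),\dot\xi(s))$ solves \eqref{eq:Ham_ODE} (Proposition \ref{Tonelli_straint}). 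Now let $\gamma_1,\gamma_2$ both satisfy the conclusion of the lemma. Strict convexity of $L$ in $v$ forces $\dot\gamma_1(0)=\dot\gamma_2(0)$, so $(\gamma_1,p_1)$ and $(\gamma_2,p_2)$ solve \eqref{eq:Ham_ODE} near $s=0$ with the common value $(x,Du(x))$ at $s=0$; since $H\in C^2$, Cauchy--Lipschitz gives $\gamma_1\equiv\gamma_2$ on a left neighborhood of $0$, and the standard connectedness argument on $s^*:=\inf\{a:\gamma_1\equiv\gamma_2\text{ on }[a,0]\}$ --- noting $\gamma_1(s^*)=\gamma_2(s^*)\in\Omega$ by the previous step, and that both curves are $C^2$ extremals near $s^*$, so that $\dot\gamma_1(s^*)=\dot\gamma_2(s^*)$ --- pushes the coincidence past $s^*$ unless $s^*=-T(x)$. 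Thus $\gamma_1\equiv\gamma_2$.

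The step I expect to be delicate is the global one: isolating that calibrated curves cannot touch the boundary before time $-T(x)$ (which is exactly where the definition of $T(x)$ as an infimum of exit times is used) and then running the Hamiltonian-ODE continuation across the whole interval $[-T(x),0]$. By contrast, the existence of $\gamma_x$ and the momentum identity are routine weak-KAM support-function computations.
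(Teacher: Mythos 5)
Your proof is correct and follows the same overall strategy as the paper's: existence and calibration via the optimal exit-time representation (Corollary \ref{exit_time_prob} and Proposition \ref{pro_calibrated}), the momentum identity as a necessary optimality condition, and uniqueness via the Hamiltonian flow. The paper's own argument is considerably terser on the latter two points: after observing that $\gamma_x$ minimizes $\gamma\mapsto g(\gamma(-T(x)))+\int_{-T(x)}^0 L(\gamma,\dot\gamma)\,ds$ over arcs in $\overline{\Omega}$ ending at $x$, it simply invokes ``classical results in calculus of variations'' (essentially a sensitivity/transversality argument, cf.\ Theorem~8.4.14 in Cannarsa--Sinestrari) for $Du(x)=\frac{\partial L}{\partial v}(x,\dot\gamma_x(0))$, and it does not spell out uniqueness at all. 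You instead prove the momentum identity self-containedly by constructing the $C^{1,1}$ touching function $\psi(z)=u(\gamma_x(-h))+A^{\Omega}_h(\gamma_x(-h),z)$ and applying Proposition \ref{C11_A_t}(c) with \eqref{eq:diff_A_t}; this is a legitimate alternative that stays entirely inside the paper's toolbox for $A^{\Omega}_t$ and avoids appealing to black-box classical theory. Your uniqueness argument is also essentially the right one: the key global observation --- that a $u$-calibrated curve $\xi$ from $x$ cannot reach $\partial\Omega$ at a time $s^*\in(-T(x),0)$, since otherwise $T_\xi(x)\leqslant -s^*<T(x)$ contradicts the infimum defining $T(x)$ --- is precisely what lets you continue the Cauchy--Lipschitz identification of $(\gamma_1,p_1)$ and $(\gamma_2,p_2)$ across all of $[-T(x),0]$ rather than only on an initial interval. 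In short, the decomposition is the same, but you supply the two arguments the paper leaves implicit, using the paper's own fundamental-solution machinery for the momentum step.
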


\begin{proof}
	In view of Corollary \ref{exit_time_prob},  there is $y_x\in \partial \Omega$ such that 
	$$
	u(x)=g(y_x)+A_{T(x)}^\Omega(y_x,x),
	$$
	which implies that there is a $C^2$ curve $\gamma_x\in \Gamma^{-T(x),0}_{y_x,x}(\overline{\Omega})$ such that 
	$$
	u(x)=g(y_x)+\int_{-T(x)}^0L(\gamma_x,\dot{\gamma}_x)ds.
	$$
	By Proposition \ref{compatible_u}, $\gamma_x$ is a $u$-calibrated curve and minimizes the quantity 
	$$
	g(\gamma(-T(x)))+\int_{-T(x)}^0L(\gamma,\dot{\gamma})ds,
	$$
	among all absolutely continuous curves $\gamma:[-T(x),0]\to\overline{\Omega}$ with $\gamma(0)=x$. By classical results in calculus of variations, we get $Du(x)=\frac{\partial L}{\partial v}(x,\dot{\gamma}_x(0))$.
\end{proof}

\begin{proof}[Proof of Proposition \ref{key1}]
	For any $x\in\Omega$, let $p\in D^*u(x)$. Then there exists a sequence $\{x_k\}_{k\in \N}\subset \Omega\setminus\{x\}$ such that $u$ is differentiable at $x_k$ and 
$$
\lim_{k\to+\infty}x_k=x,\quad \lim_{k\to+\infty}Du(x_k)=p.
$$ 
By Lemma \ref{pre}, for each $x_k$ there are $y_k\in\partial\Omega$ and $\gamma_k:[-T(x_k),0]\to\overline{\Omega}$ with $\gamma_k(0)=x_k$, $Du(x_k)=\frac{\partial L}{\partial v}(x_k,\dot{\gamma}_k(0))$ and 
$$
u(x_k)=g(y_k)+\int_{-T(x_k)}^0L(\gamma_k,\dot{\gamma}_k)ds.
$$
Let $(\gamma,\eta)$ be the solution of \eqref{eq:Ham_ODE} with initial conditions $\gamma(0)=x$, $\eta(0)=p$. Since $(\gamma_k,\frac{\partial L}{\partial v}(\gamma_k,\dot{\gamma}_k))$ converges to $(\gamma,\eta)$ locally uniformly, then $\gamma:[-T_{x,p},0]\to\overline{\Omega}$ is a $u$-calibrated curve for some $T_{x,p}>0$ with $\gamma(-T_{x,p})\in\partial \Omega$ and
$$
u(x)=g(\gamma(-T_{x,p}))+\int_{-T_{x,p}}^0L(\gamma,\dot{\gamma})ds.
$$

Since the other part of the proof is quite similar to the one of Theorem 8.4.14 in \cite{Cannarsa-Sinestrari}, we omit it here.
\end{proof}

\section{Global generalized characteristics}

In this section, we study the propagation of singularities of $u$, the value function of \eqref{eq:represent_formulae}, which is a viscosity solution of the Hamilton-Jacobi equation \eqref{eq:HJ_BP}. 

\begin{defn}
 We call $x\in\Omega$ a {\em singular point} if $u$ is not differentiable at $u$.	 The set of all singular points of $u$, denoted by $\SING$, is called the {\em singular set} of $u$. A point in $x\in\Omega$ is called a {\em cut point} of $u$, if for any $u$-calibrated curve $\xi:[a,b]\to M$ with $x\in\xi([a,b])$, we have $x=\xi(b)$. The set of all cut points of $u$, denoted by $\CUT$, is called the cut locus of $u$.
\end{defn}

It is a fact that $\SING\subset\CUT\subset\overline{\SING}$.

\subsection{Propagation of singularities of $u$}

In this part we assume $(L,\Omega,g)$ satisfies assumptions ({$\bf{SH}$}). To study the propagation of singularities of $u$, we will use the intrinsic methods developed in \cite{Cannarsa-Cheng3}. The basic idea is if a viscosity solution $u$ of \eqref{eq:HJ_BP} can have a representation as inf-convolution (like \eqref{eq:represent_formulae}), then the maximizers in the corresponding sup-convolution determine the propagation of singularities. More clearly, for fixed $x\in\Omega$ and suitable $\lambda>0$, we want to look for maximizers of
$$
u(y)-A^{\Omega}_t(x,y),\quad y\in B(x,\lambda t).
$$
Comparing to the problem in \cite{Cannarsa-Cheng3}, in this paper, we have to deal with some difficulties with the state constraint.

Now, fix any $x\in\Omega$, let $\delta_x=\frac 12 d_{\partial\Omega}(x)$. Then $\overline{B(x,\delta_x)}\subset\Omega$. It is known that there exists $\lambda_0>0$ depending only on $L$ and $\text{Lip}\,(u)$ such that each maximizer $y_{t,x}$ of the function $u(\cdot)-A^{\Omega}_t(x,\cdot)$ is contained in the ball $\overline{B(x,\lambda_0t)}$ for any $t>0$ (see \cite[Lemma 3.1]{Cannarsa-Cheng3}). Fix $\lambda>\lambda_0$, choose $t_x>0$ such that 
\begin{equation}\label{eq:relation1}
	t_x<\frac 1{2\lambda}d_{\partial\Omega}(x).
\end{equation}
Thus, $\overline{B(x,\lambda t)}\subset\overline{B(x,\delta_x)}$ for $0<t\leqslant t_x$.

\begin{Lem}\label{prop_sing_no_uniquness}
For any $x\in\CUT$ and any $\lambda>\lambda_0$, suppose $t_x>0$ is chosen such that \eqref{eq:relation1} holds. If $t_x$ also satisfies the following relation
	\begin{equation}\label{eq:relation2}
		t_x<T(x),
	\end{equation}
	then, for any $t\in(0,t_x]$, each maximizer of the function
	\begin{equation}\label{eq:maximizer}
		y\mapsto u(y)-A^{\Omega}_t(x,y)
	\end{equation}
	is contained in $\SING$. 
\end{Lem}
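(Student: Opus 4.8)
The plan is to argue by contradiction: suppose $t\in(0,t_x]$ and that some maximizer $y_{t,x}$ of \eqref{eq:maximizer} is a point where $u$ is differentiable, and derive that $x$ cannot be a cut point. The first step is to record the standard first-order condition for the sup-convolution. Since $\overline{B(x,\lambda t)}\subset\overline{B(x,\delta_x)}\subset\Omega$ by \eqref{eq:relation1}, the maximizer $y:=y_{t,x}$ lies in the interior of $B(x,\lambda t)$ (this uses $\lambda>\lambda_0$ and the containment of maximizers in $\overline{B(x,\lambda_0 t)}$ from \cite[Lemma 3.1]{Cannarsa-Cheng3}), so $D^+u(y)\cap D_y A^{\Omega}_t(x,y)\neq\varnothing$; more precisely, there is $p\in D^+u(y)$ with $p=D_yA^{\Omega}_t(x,y)$. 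By Proposition \ref{C11_A_t}(c), for $t$ small $A^{\Omega}_t(x,\cdot)$ is $C^{1,1}_{\mathrm{loc}}$ near $x$ with $D_yA^{\Omega}_t(x,y)=\frac{\partial L}{\partial v}(\xi(t),\dot\xi(t))$, where $\xi\in\Gamma^{0,t}_{x,y}$ is the unique minimizer of $A^{\Omega}_t(x,y)$, which lies in $\Omega$. If $u$ is differentiable at $y$, then $p=Du(y)$ and the concatenation of $\xi$ (reparametrized on $[-t,0]$) with the $u$-calibrated curve through $y$ given by Proposition \ref{key1} (or Lemma \ref{pre}) forms a single curve that we must show is $u$-calibrated and passes through $x$ before its right endpoint $y$.

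The core of the argument is the calibration/optimality synchronization. Because $y$ maximizes $u(\cdot)-A^{\Omega}_t(x,\cdot)$, for every $z$ we have $u(z)-A^{\Omega}_t(x,z)\leqslant u(y)-A^{\Omega}_t(x,y)$; taking $z=x$ and using $A^{\Omega}_t(x,x)\geqslant 0$ (which holds since $c_\Omega(L)\leqslant 0$, by Lemma \ref{poten_proper}(2), and in fact $A^{\Omega}_t(x,x)\geqslant -c_\Omega(L)t>0$ under ({\bf SH}4$'$)) gives $u(y)-u(x)\geqslant A^{\Omega}_t(x,y)-A^{\Omega}_t(x,x)$. On the other hand $u(y)-u(x)\leqslant\Phi^{\Omega}_L(x,y)\leqslant A^{\Omega}_t(x,y)$ by Proposition \ref{compatible_u}. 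Combining, $u(y)-u(x)=A^{\Omega}_t(x,y)=\int_0^t L(\xi,\dot\xi)\,ds$, so the minimizing arc $\xi$ from $x$ to $y$ is $u$-calibrated. Now let $\zeta:[-T(x),0]\to\overline{\Omega}$ be the $u$-calibrated curve with $\zeta(0)=x$ produced by Corollary \ref{exit_time_prob}/Proposition \ref{pro_calibrated}; reparametrize $\xi$ on $[0,t]$ and glue it to the right end of $\zeta$ to obtain a curve $\widehat\zeta:[-T(x),t]\to\overline{\Omega}$ with $\widehat\zeta(-T(x))\in\partial\Omega$, $\widehat\zeta(0)=x$, $\widehat\zeta(t)=y$. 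Additivity of $L$-action along concatenations together with the two calibration identities shows $\widehat\zeta$ is $u$-calibrated on $[-T(x),t]$. But then $x=\widehat\zeta(0)$ lies on a $u$-calibrated curve and is \emph{not} its right endpoint (since $t>0$ and $y\neq x$: indeed $u(y)-u(x)=A^{\Omega}_t(x,y)>0$ forces $y\neq x$, using $A^{\Omega}_t(x,x)\leqslant u(y)-u(x)$... more simply, $\xi$ is a nonconstant extremal because $c_\Omega(L)<0$ precludes $A^{\Omega}_t(x,x)=A^{\Omega}_t(x,y)$ with $x=y$ and $t>0$). This contradicts $x\in\CUT$.

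One subtlety to handle with care: we must check $x\neq y$. If $y=x$, then $\xi$ is a loop at $x$ with $A^{\Omega}_t(x,x)=u(x)-u(x)=0$, contradicting $A^{\Omega}_t(x,x)\geqslant -c_\Omega(L)t>0$ from ({\bf SH}4$'$); so $y\neq x$ automatically, and $\widehat\zeta$ genuinely passes through $x$ strictly before its right endpoint. A second point is why the glued curve is $u$-calibrated on the \emph{whole} interval and not merely piecewise: since $u(\widehat\zeta(t))-u(\widehat\zeta(-T(x)))=(u(y)-u(x))+(u(x)-g(\widehat\zeta(-T(x))))$ equals $\int_0^tL(\xi,\dot\xi)+\int_{-T(x)}^0L(\zeta,\dot\zeta)=\int_{-T(x)}^tL(\widehat\zeta,\dot{\widehat\zeta})$, calibration on the union follows directly from the two pieces, exactly as in Proposition \ref{pro_calibrated}(a). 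The main obstacle I anticipate is the bookkeeping around the first-order/envelope condition at the maximizer $y$ — ensuring $y$ is interior to $B(x,\lambda t)$ so that $D_yA^{\Omega}_t(x,y)\in D^+u(y)$, and then upgrading to $=Du(y)$ under the (to-be-contradicted) differentiability hypothesis; this is where the choice $\lambda>\lambda_0$ and the relation \eqref{eq:relation1} are used, together with the $C^{1,1}$ regularity of $A^{\Omega}_t(x,\cdot)$ from Proposition \ref{C11_A_t}. The role of \eqref{eq:relation2}, $t_x<T(x)$, is to guarantee that $\zeta$ is defined on an interval of length $T(x)>t_x\geqslant t$, so that the exit point $\widehat\zeta(-T(x))\in\partial\Omega$ is reached strictly to the left of $x$ and the concatenation is nondegenerate.
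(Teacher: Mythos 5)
There is a genuine gap at the central ``calibration'' step. You derive two facts: (i) from maximality, taking $z=x$, that $u(y)-u(x)\geqslant A^{\Omega}_t(x,y)-A^{\Omega}_t(x,x)$, and (ii) from domination (Proposition \ref{compatible_u}), that $u(y)-u(x)\leqslant\Phi^{\Omega}_L(x,y)\leqslant A^{\Omega}_t(x,y)$. You then ``combine'' these to conclude $u(y)-u(x)=A^{\Omega}_t(x,y)$, i.e.\ that the minimizing arc $\xi$ from $x$ to $y$ is $u$-calibrated. But (i) and (ii) only sandwich $u(y)-u(x)$ in the interval $[\,A^{\Omega}_t(x,y)-A^{\Omega}_t(x,x),\,A^{\Omega}_t(x,y)\,]$, and, as you yourself note, $A^{\Omega}_t(x,x)\geqslant -c_{\Omega}(L)\,t>0$ under ({\bf SH}4$'$); so the two bounds do \emph{not} pinch, and no equality follows. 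This is not a cosmetic issue: the whole Lax--Oleinik mechanism behind \cite{Cannarsa-Cheng3} rests precisely on the fact that the minimizing arc of $A^{\Omega}_t(x,\cdot)$ ending at the maximizer $y$ is \emph{not} obtained by a direct variational pinching argument; one cannot read off calibration from the maximality of $u(\cdot)-A^{\Omega}_t(x,\cdot)$ alone.

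The conclusion you want (that $\xi$ is $u$-calibrated from $x$ to $y$) is in fact true, but the correct route is the one the paper takes: from differentiability of $u$ at $y$ one gets $Du(y)=D_yA^{\Omega}_t(x,y)=\frac{\partial L}{\partial v}(\xi(0),\dot\xi(0))$ by Proposition \ref{C11_A_t}; Proposition \ref{key1} then produces a $u$-calibrated extremal $\gamma:[-T_\gamma,0]\to\overline\Omega$ with $\gamma(0)=y$ and the \emph{same} momentum $Du(y)$ at $y$; by uniqueness of solutions of the Hamiltonian system \eqref{eq:Ham_ODE}, $\gamma$ and $\xi$ coincide as long as both are defined, and since $\xi$ stays in a ball $B(x,R)\subset\Omega$ on $[-t,0]$ while $\gamma(-T_\gamma)\in\partial\Omega$, one gets $T_\gamma\geqslant t$ and $\gamma(-t)=\xi(-t)=x$; finally $\gamma$, being $u$-calibrated and passing through $x$ strictly before its right endpoint, shows $x\notin\CUT$. (Only afterwards can one deduce that $\xi=\gamma|_{[-t,0]}$ is $u$-calibrated.) Your concatenation idea --- gluing $\xi$ to the backward calibrated arc $\zeta$ through $x$ --- is a fine way to exhibit the curve once calibration of $\xi$ is established, and your handling of $x\neq y$ via $A^{\Omega}_t(x,x)>0$ is correct; but the proof as written does not establish calibration of $\xi$, so the concatenation argument that follows has nothing to stand on. Once repaired via the Hamiltonian-trajectory uniqueness, your proof essentially reduces to the paper's.
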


\begin{proof}
	For any $x\in\Omega$, any $\lambda>\lambda_0$ and any $0<t<t_x$, let $y_{t,x}$ be any maximizer of \eqref{eq:maximizer}. It clear that $y_{t,x}\in B(x,\lambda t)$ by \eqref{eq:relation1}. 
	
	If $x\in\CUT$, then we will show $y_{t,x}\in\SING$ for all $t\in(0,t_x]$. Assume  by contradiction that $y_{t,x}$ is a point of differentiability of $u$ for some $t\in(0,t_x]$. Thus 
	$$
	0\in D^+\{u(\cdot)-A^{\Omega}_t(x,\cdot)\}(y_{t,x})=Du(y_{t,x})-D^-\{A^{\Omega}_t(x,\cdot)\}(y_{t,x}),
	$$
	equivalently, $Du(y_{t,x})\in D^-\{A^{\Omega}_t(x,\cdot)\}(y_{t,x})$. It follows that $A^{\Omega}_t(x,\cdot)$ is differentiable at $y_{t,x}$ and
	\begin{align}\label{end-point}
	p_{t,x}:=Du(y_{t,x})=D_yA^{\Omega}_t(x,y_{t,x})
	\end{align}
	since $A^{\Omega}_t(x,\cdot)$ is locally semiconcave (see Proposition \ref{C11_A_t}). Hence, there is a unique minimizer of $A^{\Omega}_t(x,y_{t,x})$, denoted by $\xi_{t,x}:[-t,0]\to\Omega$, such that 
	$$
	D_yA^{\Omega}_t(x,y_{t,x})=\frac{\partial L}{\partial v}(\xi_{t,x}(0),\dot{\xi}_{t,x}(0)).
	$$ 
By Proposition \ref{key1}, there exists a $C^2$ $u$-calibrated curve $\gamma_x:[-T_{x,p_{t,x}},0]\to\Omega$ such that $\gamma_x(0)=y_{t,x}$ and $p_{t,x}=\frac{\partial L}{\partial v}(\gamma_x(0),\dot{\gamma}_x(0))$. Since $\xi_{t,x}(0)=\gamma_x(0)$ and \eqref{end-point}, then $\xi_{t,x}$ and $\gamma_x$ coincide on $[-t,0]$ since $t<t_x<T(x)\leqslant T_{x,p_{t,x}}$ by \eqref{eq:relation2}. This leads to a contradiction since $x=\gamma_x(-t)$ and $\gamma_x(-t)\not\in\CUT$. 
\end{proof}

To ensure the uniqueness of the maximizer of $u(\cdot)-A^{\Omega}_t(x,\cdot)$ in $B(x,\lambda t)$, we need more work by using the semiconcavity and convexity estimate of $u(\cdot)$ and $A^{\Omega}_t(x,\cdot)$ in the ball $B(x,\lambda t)$ (see Proposition \ref{C11_A_t}). 


\begin{The}\label{propagation1}
Let $x_0\in\CUT$. Then, we have
\begin{enumerate}[\rm (a)]
	\item either, there exists a generalized characteristic $\mathbf{x}:[0,+\infty)\to\Omega$ starting from $\mathbf{x}(0)=x_0$ such that $\mathbf{x}(s)\in\SING$ for all $s\in[0,+\infty)$,
	\item or, there exist $T>0$ and a generalized characteristic $\mathbf{x}:[0,T)\to\Omega$ starting from $\mathbf{x}(0)=x_0$ such that $\mathbf{x}(s)\in\SING$ for all $s\in[0,T)$, and a sequence of positive real numbers $\{s_k\}$ such that
	$$
	\lim_{k\to\infty}s_k=T,\quad\text{and}\quad\lim_{k\to\infty}d_{\partial\Omega}(\mathbf{x}(s_k))=0.
	$$
\end{enumerate}
\end{The}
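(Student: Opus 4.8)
\emph{Strategy.} I would obtain the dichotomy by iterating a purely local construction, following the ``intrinsic'' scheme of \cite{Cannarsa-Cheng3}: the only extra work is bookkeeping to handle the state constraint $\overline{\Omega}$ and to control, from below, the length of each local step in terms of the distance to $\partial\Omega$. Concretely, for $x\in\CUT$ one considers the maximizers of $y\mapsto u(y)-A^{\Omega}_t(x,y)$ on $\overline{B(x,\lambda t)}$ for small $t>0$; these are singular by Lemma~\ref{prop_sing_no_uniquness}, and for $t$ small enough the maximizer is unique and the map $t\mapsto\text{(maximizer)}$ traces a generalized characteristic emanating from $x$ into $\SING$. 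Concatenating such arcs (legitimate because the defining differential inclusion \eqref{generalized_characteristics} is local and Lipschitz data match at junctions), one builds $\mathbf x$ on a maximal interval; the two alternatives in the statement correspond to this interval being $[0,+\infty)$ or a finite $[0,T)$.

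\emph{The local step.} Fix $\lambda>\lambda_0$ and $x\in\CUT$. I claim there is $\tau_x>0$, with $\tau_x\geqslant c_0\,d_{\partial\Omega}(x)$ for a constant $c_0>0$ depending only on $L$ and $\mathrm{Lip}(u)$, such that: for every $t\in(0,\tau_x]$ the function $y\mapsto u(y)-A^{\Omega}_t(x,y)$ has a unique maximizer $\mathbf y(t)$ on $\overline{B(x,\lambda t)}$; the arc $\mathbf y\colon[0,\tau_x]\to\Omega$ (with $\mathbf y(0)=x$) is Lipschitz and is a generalized characteristic of \eqref{eq:HJ_BP}; and $\mathbf y(t)\in\SING$ for all $t\in(0,\tau_x]$. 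Indeed, since $d_{\partial\Omega}(x)\leqslant C_2 T(x)$ by Lemma~\ref{upper_bound}, both \eqref{eq:relation1} and \eqref{eq:relation2} hold once $t_x<\min\{\tfrac1{2\lambda},\tfrac1{C_2}\}\,d_{\partial\Omega}(x)$, so Lemma~\ref{prop_sing_no_uniquness} already gives that every such maximizer lies in $\SING$. Shrinking $t_x$ further so that $\overline{B(x,\lambda t)}\subset\Omega_{d_{\partial\Omega}(x)/2}$ and $C(\lambda)/t>2\bar C/d_{\partial\Omega}(x)$ for $t\in(0,t_x]$, the map $y\mapsto u(y)-A^{\Omega}_t(x,y)$ becomes uniformly concave on $\overline{B(x,\lambda t)}$, being the difference of the semiconcave $u$ (Lemma~\ref{semi_value}, constant $2\bar C/d_{\partial\Omega}(x)$) and the uniformly convex $A^{\Omega}_t(x,\cdot)$ (Proposition~\ref{C11_A_t}(b), constant $C(\lambda)/t$); hence its maximizer is unique, and interior by the choice $\lambda>\lambda_0$. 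That $t\mapsto\mathbf y(t)$ is Lipschitz and satisfies \eqref{generalized_characteristics} (with $\dot{\mathbf y}^+(t)=H_p(\mathbf y(t),p(t))$, $p(t)=D_yA^{\Omega}_t(x,\mathbf y(t))\in D^+u(\mathbf y(t))$) follows as in \cite[\S3]{Cannarsa-Cheng3}; the one point needing the constraint is that for $t$ small the minimizers of $A^{\Omega}_t(x,\cdot)$ stay in $B(x,\delta_x)\subset\Omega$, so locally $A^{\Omega}_t=A_t$ and the constrained problem reduces verbatim to the free one (cf.\ the proof of Proposition~\ref{C11_A_t}). Finally, all the thresholds used above ($t_\lambda$ and $d_{\partial\Omega}(x)/\kappa(\lambda)$ from Proposition~\ref{C11_A_t}, the two displayed bounds, \eqref{eq:relation1}--\eqref{eq:relation2}) are linear in $d_{\partial\Omega}(x)$ with slopes depending only on $\lambda$, $L$, $\mathrm{Lip}(u)$, whence $\tau_x\geqslant c_0\,d_{\partial\Omega}(x)$.

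\emph{Iteration and conclusion.} Set $\sigma_0=0$, $\mathbf x=\mathbf y$ on $[0,\tau_{x_0}]$, $\sigma_1=\tau_{x_0}$. Since $\mathbf x(\sigma_1)\in\SING\subset\CUT$ and $\SING\subset\Omega$, the local step applies at $\mathbf x(\sigma_1)$; concatenating, we extend $\mathbf x$ to $[0,\sigma_2]$ with $\sigma_2-\sigma_1\geqslant c_0\,d_{\partial\Omega}(\mathbf x(\sigma_1))$ and $\mathbf x(s)\in\SING$ for $s\in(\sigma_1,\sigma_2]$. Iterating produces an increasing sequence $0=\sigma_0<\sigma_1<\sigma_2<\cdots$ with $\sigma_{k+1}-\sigma_k\geqslant c_0\,d_{\partial\Omega}(\mathbf x(\sigma_k))$ and a generalized characteristic $\mathbf x\colon[0,T)\to\Omega$, $T:=\sup_k\sigma_k\in(0,+\infty]$, with $\mathbf x(s)\in\SING$ for every $s\in(0,T)$; moreover $\mathbf x$ is Lipschitz with a constant independent of the steps, since a.e.\ $\dot{\mathbf x}(s)\in\mathrm{co}\,H_p(\mathbf x(s),D^+u(\mathbf x(s)))$, a set bounded uniformly on $\overline{\Omega}$ because $D^+u$ is bounded by $\mathrm{Lip}(u)$ and $H_p$ is continuous. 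If $T=+\infty$ we are in case~(a). If $T<+\infty$, then $\sigma_{k+1}-\sigma_k\to0$, so $d_{\partial\Omega}(\mathbf x(\sigma_k))\to0$; being uniformly Lipschitz on $[0,T)$, $\mathbf x$ extends continuously to $[0,T]$ with $d_{\partial\Omega}(\mathbf x(T))=\lim_k d_{\partial\Omega}(\mathbf x(\sigma_k))=0$, i.e.\ $\mathbf x(T)\in\partial\Omega$, and taking $s_k=\sigma_k$ ($k\geqslant1$) puts us in case~(b).

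\emph{Main obstacle.} The crux is the local step of the second paragraph: transporting to the state-constrained setting the argument of \cite{Cannarsa-Cheng3} that $t\mapsto\mathbf y(t)$ is a genuine generalized characteristic (not merely a Lipschitz arc of singular points), and, within that argument, tracking all constants to extract the quantitative lower bound $\tau_x\geqslant c_0\,d_{\partial\Omega}(x)$ — it is this uniformity in the step size that forces the dichotomy. Once it is in place, uniqueness of the maximizer, propagation into $\SING$ via Lemma~\ref{prop_sing_no_uniquness}, concatenation, and the final alternative are all routine.
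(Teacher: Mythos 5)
Your proof is correct and follows essentially the same approach as the paper: at a cut point, maximizers of $y\mapsto u(y)-A^{\Omega}_t(x,\cdot)$ trace a singular generalized characteristic via Lemma~\ref{prop_sing_no_uniquness}, strict concavity (Proposition~\ref{C11_A_t} plus Lemma~\ref{semi_value}) gives uniqueness, and the construction iterates until either the total time diverges or $d_{\partial\Omega}$ along the sequence tends to zero. Your extraction of a single uniform lower bound $\tau_x\geqslant c_0\,d_{\partial\Omega}(x)$ (valid because $\Omega$ is bounded, so the constant threshold $t_\lambda$ dominates a linear one) is cleaner bookkeeping than the paper's conditional split on $\limsup_k t^*_k$, but it is the same mechanism.
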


\begin{proof}
	We begin with a local study. Let $x_0\in\Omega$ be a cut point of $u$. By Proposition \ref{compatible_u}, $u$ is Lipschitz on $\overline{\Omega}$. From \cite[Lemma 3.1]{Cannarsa-Cheng3}, there exists $\lambda_0>0$ depending only on $L$ and $\text{Lip}\,(u)$ such that each maximizer of the function $u(\cdot)-A^{\Omega}_t(x_0,\cdot)$ is contained in the ball $\overline{B(x_0,\lambda_0t)}\subset\Omega$. Fix any $\lambda>\max\{\lambda_0,\frac{C_2}{2}\}$, where $C_2$ is the constant in Lemma \ref{upper_bound}, we define
	\begin{equation}
	\begin{split}
		t^*_0=&\min\frac 12\left\{\frac{1}{2\lambda}d_{\partial\Omega}(x_0),\frac {d_{\partial\Omega}(x_0)}{\kappa(\lambda)}, t_{\lambda}, \frac {C(\lambda)d_{\partial\Omega}(x_0)}{\bar{C}}\right\}\\
		\leqslant&\min\frac 12\left\{T(x_0),\frac {d_{\partial\Omega}(x_0)}{\kappa(\lambda)}, t_{\lambda}, \frac {C(\lambda)d_{\partial\Omega}(x_0)}{\bar{C}}\right\},
	\end{split}
	\end{equation}
	where $C(\lambda)$ is the constant in Proposition \ref{C11_A_t}, $\bar{C}$ is the constant in Lemma \ref{semi_value}, and $\kappa$ is the function in Lemma \ref{eq:Lip_main}.
	Notice that the inequality above follows from Lemma \ref{upper_bound}. Then, for any $t\in(0,t^*_0]$, the function defined by
	$$
	y\mapsto u(y)-A^{\Omega}_t(x_0,y),\quad y\in B(x_0,\lambda t)\subset\Omega,
	$$
	is strictly concave, by Proposition \ref{C11_A_t} and Lemma \ref{semi_value}, with a unique maximizer $y^0_{t,x_0}\in B(x_0,\lambda t)$. We define the curve
	\begin{equation}
		\mathbf{x}_0(s)=\begin{cases}
			y^0_{s,x_0},& s\in(0,t^*_0],\\
			x_0,& s=0.
		\end{cases}
	\end{equation}
	By Lemma \ref{prop_sing_no_uniquness} and the same discussion in \cite{Cannarsa-Cheng3}, $\mathbf{x}_0:[0,t^*_0]\to\Omega$ is a generalized characteristic and $\mathbf{x}_0(s)\in\SING$ for all $s\in [0,t^*_0]$. We denote $x_1=\mathbf{x}_0(t^*_0)$ and we try to extend the generalized characteristic $\mathbf{x}_0$ starting from $x_1$ since $x_1\in\SING\subset\CUT$. 
	
	By the definition of $t^*_0$, we observe that if 
	$$
	d_{\partial\Omega}(x_0)\leqslant t_{\lambda},
	$$
	then there exists $K_1(\lambda)>0$ such that $t^*_0\geqslant K_1(\lambda)d_{\partial\Omega}(x_0):=\bar{t}_0$. Thus, replacing $t^*_0$ by $\bar{t}_0$, we obtain a singular generalized characteristic, also denoted by $\mathbf{x}_0$, defined on $[0,\bar{t}_0]$.
	
	Now, by deduction, for each $k\in\N$, we define 
	$$
	t^*_k=\min\frac 12\left\{\frac{1}{2\lambda}d_{\partial\Omega}(x_k),\frac {d_{\partial\Omega}(x_k)}{\kappa(\lambda)}, t_{\lambda}, \frac {C(\lambda)d_{\partial\Omega}(x_k)}{\bar{C}}\right\}.
	$$
	A curve $\mathbf{x}_k:[0,t^*_k]\to\Omega$ is defined as follows: let $\mathbf{x}_k(0)=x_{k}$ and for any $s\in(0,t^*_k]$, let $\mathbf{x}_k(s)$ be the unique maximizer of the function $y\mapsto u(y)-A^{\Omega}_s(x_{k},y)$ with $x_{k}=\mathbf{x}_{k-1}(t^*_{k-1})$ since this function is strictly concave. It is clear that $\mathbf{x}_k(s)\in\SING$ for all $s\in[0,t^*_k]$ since $x_{k}\in\CUT$. 
	
	Suppose $\limsup_{k\to\infty}t^*_k=\bar{t}>0$, we define $\sum_{i=0}^{k-1}t^*_i=s_k$ and $s_0=0$, then $\sum^{\infty}_{i=0}t^*_i=+\infty$. Thus, $\mathbf{x}:[0,+\infty)\to\Omega$ defined by,
	\begin{equation}\label{eq:global_gc}
		\mathbf{x}(t)=\mathbf{x}_k(t-s_k), \quad t\in[s_k,s_{k+1}],\quad k=0,1,\ldots,
	\end{equation}
	is an expected generalized characteristic starting form $x_0$ and $\mathbf{x}(t)\in\SING$ for all $t>0$. 
	
	Now, suppose $\limsup_{k\to\infty}t^*_k=0$. Then, without loss of generality, we can suppose that there exists $K_2(\lambda)>0$ such that
	$$
	t^*_k=K_2(\lambda)d_{\partial\Omega}(x_k)\leqslant t_{\lambda}, 	$$
	for $k$ large enough. Therefore, $\mathbf{x}:[0,\sum^{\infty}_{i=0}t^*_i)\to\Omega$ defined by \eqref{eq:global_gc} is a singular generalized characteristic and $d_{\partial\Omega}(\mathbf{x}(s_k))=d_{\partial\Omega}(x_{k+1})\to0$ as $k\to\infty$. Let $T=\sum^{\infty}_{i=0}t^*_i$, then $T\leqslant+\infty$. Finally, notice that if $T=+\infty$ we also obtain a singular global generalized characteristic defined on $[0,+\infty)$. This completes the proof.
\end{proof}

\subsection{Further results for mechanical systems} 

For general Tonelli systems, Theorem \ref{propagation1} shows that there is a generalized characteristic $\mathbf{x}$ starting from a cut point of $u$ which stays on $\SING$ such that $\mathbf{x}$ can be extended to $(0,+\infty]$ or it will hit the boundary $\partial\Omega$. In this section, we will show that for a certain family of mechanical systems, we can exclude the possibility that the singularities approach the boundary. 

In this part, we consider the following Lagrangians on $\R^n$:
\begin{equation}\label{eq:mech}
	L(x,v)=\frac 12\langle A(x)v,v\rangle-\langle DS(x),v\rangle-V(x),\quad (x,v)\in\R^n\times\R^n,
\end{equation}
where $A(x)$ is a symmetric and positive definite matrix $C^2$ depending on $x$, $S$ (resp. $V$) is a $C^3$ (resp. $C^2$) function on $\R^n$. Let $\Omega\subset \R^n$ be a bounded Lipschitz domain. Assume that
\begin{equation}\label{eq:condition_M}
	\ \max_{x\in\overline{\Omega}}V(x)<0\quad \text{and}\quad g+S\ \text{is constant on}\ \partial\Omega.
\end{equation}
Let
\begin{equation}\label{eq:mech2}
	L_0(x,v)=L(x,v)+\langle DS(x),v\rangle=\frac 12\langle A(x)v,v\rangle-V(x).
\end{equation}
Denote by $H_0$ the Hamiltonian associated with $L_0$. It is a fact that $c_{\Omega}(L)=c_{\Omega}(L_0)=\max_{x\in\overline{\Omega}}V(x)$.   We suggest readers see Proposition \ref{exact_form} first before Theorem \ref{thm_mech}.

Throughout this section, let $u$ be the value function of \eqref{eq:represent_formulae} with respect to $(L,g)$ which is a viscosity solution of \eqref{eq:HJ_BP}. 

\subsubsection{Global propagation of singularities of $u$}


\begin{The}\label{thm_mech}
Let $v=u+S$. If $x_0\in\CUT$, then there exists a unique generalized characteristic $\mathbf{x}:[0,+\infty)\to\Omega$ with $\mathbf{x}(0)=x_0$ of $H_0(x,Dv)=0$, i.e., $\mathbf{x}$ is a Lipschitz curve with $\mathbf{x}(0)=x_0$ and satisfies
\begin{equation}\label{eq:generalized_gradient}
	\dot{\mathbf{x}}^+(s)\in A^{-1}(\mathbf{x}(s))D^+v(\mathbf{x}(s)),\quad \forall s\in[0,+\infty).
\end{equation}
Moreover,  $\mathbf{x}(s)\in\SING$ for all $s\in[0,+\infty)$.
\end{The}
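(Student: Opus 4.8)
The plan is to reduce the mechanical problem to the general structure of Theorem \ref{propagation1}, and then to upgrade the dichotomy to a clean global statement by exploiting two special features of mechanical systems: (i) the change of variables $v = u+S$ turns $(L,g)$ into $(L_0, g+S)$, where $g+S$ is constant on $\partial\Omega$ and $L_0 \geqslant -\max_{\overline\Omega} V > 0$, so $v$ is the value function of a genuine exit-time problem for a strictly positive Lagrangian; (ii) the Hamilton-Jacobi equation $H_0(x,Dv)=0$ has the mechanical form $\tfrac12\langle A^{-1}(x)p,p\rangle = -V(x)$, whose generalized characteristic inclusion $\dot{\mathbf x}(s) \in \mathrm{co}\,(H_0)_p(\mathbf x(s), D^+v(\mathbf x(s)))$ simplifies to $\dot{\mathbf x}^+(s) \in A^{-1}(\mathbf x(s))D^+v(\mathbf x(s))$ because $(H_0)_p(x,p) = A^{-1}(x)p$ is \emph{linear} in $p$, hence commutes with taking convex hulls. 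This linearity is exactly what buys both the uniqueness of the forward characteristic and the fact that $\mathbf x$ has a right derivative everywhere with $\dot{\mathbf x}^+(s)$ equal to the element of minimal norm in $A^{-1}(\mathbf x(s))D^+v(\mathbf x(s))$; I would cite the mechanical-system analysis of \cite{Cannarsa-Cheng3} (and \cite{Cannarsa-Cheng-Fathi}) for the uniqueness and the semi-flow / monotonicity properties, rather than reprove them.

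The first real step is to run the local construction of Theorem \ref{propagation1} with $L_0$ in place of $L$: for $x_0 \in \CUT$ (the cut locus of $v$, which coincides with that of $u$ since $S$ is smooth), the strictly concave functions $y \mapsto v(y) - A^{\Omega_0}_t(x_0,y)$ on $B(x_0,\lambda t)$ have unique maximizers $y^0_{t,x_0}$, and the arc $\mathbf x_0(s) = y^0_{s,x_0}$ (with $\mathbf x_0(0)=x_0$) is a singular generalized characteristic on a short interval $[0,t_0^*]$ with $\dot{\mathbf x}_0^+(0) \in A^{-1}(x_0)D^+v(x_0)$; here I use Proposition \ref{C11_A_t}, Lemma \ref{semi_value} and Lemma \ref{prop_sing_no_uniquness} applied to $L_0$. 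One then concatenates, as in Theorem \ref{propagation1}, to get a maximal singular characteristic $\mathbf x$ defined on $[0,T)$, with $T \leqslant +\infty$, satisfying \eqref{eq:generalized_gradient} on $[0,T)$. The second step — and the heart of the theorem — is to \emph{exclude} the possibility that $T < +\infty$ with $d_{\partial\Omega}(\mathbf x(s_k)) \to 0$ (alternative (b) of Theorem \ref{propagation1}). For this I would invoke Proposition \ref{exact_form} (referenced in the text as the thing to read first), which presumably gives an explicit description of $v$, of $D^+v$, and of the calibrated curves near $\partial\Omega$ for mechanical systems with $g+S$ constant on the boundary; the point should be that along a calibrated curve the speed $|A^{1/2}(x)\dot\xi|$ is pinned down by the energy relation $\tfrac12\langle A^{-1}p,p\rangle = -V \geqslant \min_{\overline\Omega}(-V) > 0$, so characteristics move with speed bounded \emph{below}, and the monotonicity of the mechanical semi-flow forces $s \mapsto d_{\partial\Omega}(\mathbf x(s))$ to stay away from $0$ once it starts in the interior — equivalently, a singular point of $v$ cannot be the endpoint of a calibrated curve arriving from $\partial\Omega$ in a controlled way. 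Combining this with the fact that $\mathbf x(s) \in \SING$ for all $s$ (so $\mathbf x(s)$ never lies on a calibrated curve except as its endpoint) rules out alternative (b), leaving only $T = +\infty$.

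The main obstacle I expect is precisely this last exclusion: one must show quantitatively that a singular generalized characteristic of a mechanical system cannot limit onto $\partial\Omega$. The two ingredients I would lean on are (1) the uniqueness of the forward characteristic together with the \emph{backward} uniqueness / regularity of calibrated curves near $\partial\Omega$ guaranteed by $g+S$ being constant there (so that the boundary behaves like a single "source" point and the solution $v$ is regular in a one-sided neighborhood of $\partial\Omega$, as the later sections of the paper establish), which together imply that points of $\Omega$ sufficiently close to $\partial\Omega$ are not cut points; and (2) the lower bound on characteristic speed from $V < 0$, which turns "$d_{\partial\Omega}(\mathbf x(s_k)) \to 0$ as $s_k \to T$" into a contradiction with $\mathbf x(s) \in \SING$. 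A clean way to package this: if alternative (b) held, then for $k$ large $\mathbf x(s_k)$ lies in the regular neighborhood of $\partial\Omega$, hence $\mathbf x(s_k) \notin \SING$, contradicting $\mathbf x(s_k) \in \SING$. Once (b) is excluded, the theorem follows, and the uniqueness statement is inherited from the uniqueness of the maximizers $y^0_{s,\cdot}$ at each step together with the uniqueness of forward solutions of the mechanical characteristic inclusion.
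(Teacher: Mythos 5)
Your reduction to $L_0$ via Proposition \ref{exact_form} and the local construction via the maximizers of $v(\cdot) - A^{\Omega,L_0}_t(x_k,\cdot)$ match the paper, but the step that excludes alternative (b) of Theorem \ref{propagation1} does not. You argue that points of $\Omega$ sufficiently close to $\partial\Omega$ are not cut points, so that $d_{\partial\Omega}(\mathbf{x}(s_k)) \to 0$ would force $\mathbf{x}(s_k) \notin \SING$ for large $k$. Under the hypotheses of Theorem \ref{thm_mech}, however, $\Omega$ is only a bounded Lipschitz domain, and no ``regular one-sided neighborhood'' of $\partial\Omega$ is available: the boundary-regularity results you gesture at belong to Section 5, which requires a $C^2$ boundary and conditions ({\bf G1})--({\bf G2}) that are not assumed here. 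In fact, for a nonconvex Lipschitz domain (e.g.\ an L-shaped region with constant $V$, where $v$ is essentially the Euclidean distance to $\partial\Omega$) the singular set does accumulate at reflex corners of $\partial\Omega$, so the claim is false in the generality at hand. Your second ingredient, a lower bound on the speed of $\mathbf{x}$ coming from $V<0$, is also misapplied: the energy relation $\tfrac12\langle A^{-1}p,p\rangle = -V$ pins down the speed of \emph{backward} calibrated curves (i.e.\ reachable gradients of $v$), not of the forward generalized characteristic, which follows the element $p(s)$ of $D^+v(\mathbf{x}(s))$ of minimal $A^{-1}$-norm; that element can be arbitrarily small and vanishes at critical points, so $\dot{\mathbf{x}}^+$ has no uniform positive lower bound.

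The paper closes the argument by a different mechanism. It introduces the superlevel set $\Lambda_{x_0} = \{x \in \overline\Omega : v(x) \geqslant v(x_0)\}$ and shows that $\Lambda_{x_0}$ is compactly contained in $\Omega$; this hinges on the \emph{symmetry} $L_0(x,-v)=L_0(x,v)$ of the mechanical Lagrangian (which gives $A^{\Omega,L_0}_T(y_0,x_0)=A^{\Omega,L_0}_T(x_0,y_0)$) together with $c_\Omega(L_0)<0$, so that touching $\partial\Omega$ would force $\tfrac{1}{2T}A^{\Omega,L_0}_{2T}(x_0,x_0)\leqslant 0$, a contradiction. The monotonicity $v(\mathbf{x}(t_1))\leqslant v(\mathbf{x}(t_2))$ for $t_1<t_2$ (from \cite{Cannarsa-Cheng}) then guarantees that $\mathbf{x}(s)$ never leaves $\Lambda_{x_0}$, so $d_{\partial\Omega}(\mathbf{x}(s))\geqslant 2r_0$ with $r_0=\tfrac12 d(\partial\Omega,\Lambda_{x_0})$ uniformly in $s$, and the concatenation proceeds with a nondegenerate time step $t_k^*\equiv t_0^*$, giving $T=+\infty$ directly. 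No appeal to regularity near $\partial\Omega$ and no speed bound is needed. You mention ``monotonicity of the mechanical semi-flow'' in passing, but without the level-set compactness argument it does not close the gap.
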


\begin{proof}
Note that the singular sets of $u$ and $v$ are the same.
From Proposition \ref{exact_form}, we have
$$
v(x)=\inf_{y\in\partial \Omega}\{g(y)+S(y)+\Phi_{L_0}^\Omega(y,x)\}.
$$
Without loss of generality, we assume $g+S\equiv0$.
For any $x_0\in\Omega$, we define
	$$
	\Lambda_{x_0}=\{x\in\overline{\Omega}: v(x)\geqslant v(x_0)\}.
	$$
	Then $\Lambda_{x_0}$ is compact and we assert that $\partial\Lambda_{x_0}=\{x\in\overline{\Omega}: v(x)=v(x_0)\}\subset\Omega$. Indeed, if there exists $x\in\partial\Lambda_{x_0}\cap\partial\Omega$, then there exists $y_0\in\partial\Omega$ such that
	\begin{align*}
		v(x_0)=g(y_0)+S(y_0)+\Phi^{\Omega}_{L_0}(y_0,x_0)=v(x)=g(x)+S(x)=0,
	\end{align*}
	since $g+S\equiv0$ and $v\vert_{\partial\Omega}=g+S$. Invoking Corollary \ref{exit_time_prob} and the symmetric property of $L$ in $v$-variable, we have that, there exists $T>0$ such that
	\begin{align*}
		\Phi^{\Omega}_{L_0}(y_0,x_0)=A^{\Omega,L_0}_T(y_0,x_0)=A^{\Omega,L_0}_T(x_0,y_0)=0.
	\end{align*}
	In view of the energy condition $c_{\Omega}(L)=c_{\Omega}(L_0)<0$ together with the equalities above, we obtain that
	\begin{align*}
		0<\frac 1{2T}A^{\Omega,L_0}_{2T}(x_0,x_0)\leqslant 0,
	\end{align*}
	which leads to a contradiction. 
	
	
	For any cut point $x_0\in\Omega$, let $r_0=\frac 12d(\partial\Omega,\Lambda_{x_0})$. Then, due to Proposition \ref{compatible_u}  and Lemma \ref{semi_value}, $v$ is Lipschitz on $\overline{\Omega}$ and semiconcave on each $B(x,r_0)\subset\Omega$ with a constant of semiconcavity $C_{r_0}$ for all $x\in\Lambda_{x_0}$ uniformly. Fix $\lambda>\max\{\lambda_0,\frac{C_2}{2}\}$, where  $C_2$ is the constant in Lemma \ref{upper_bound} and $\lambda_0>0$ is the the constant for which each maximizer $y_{t,x}$ of the function $v(\cdot)-A^{\Omega,L_0}_t(x_0,\cdot)$ is contained in the ball $\overline{B(x_0,\lambda_0t)}$. We define 
	$$
	t^*_{0}=\min\frac 12\left\{\frac{r_0}{2\lambda},\frac {r_0}{\kappa(\lambda)}, t_{\lambda}, \frac {C(\lambda)r_0}{\bar{C}}\right\}\leqslant\min\frac 12\left\{T(x_0),\frac {r_0}{\kappa(\lambda)}, t_{\lambda}, \frac {C(\lambda)r_0}{\bar{C}}\right\},
	$$
	where $C(\lambda)/t$ is the uniform convexity constant of $A^{\Omega}_t(x_0,\cdot)$ on $B(x_0,\lambda t)$ by Proposition \ref{C11_A_t}, $\bar{C}$ is the constant in Lemma \ref{semi_value} and $\kappa$ is the function in Lemma \ref{eq:Lip_main}. The above inequality follows from Lemma \ref{upper_bound}. Therefore, $v(\cdot)-A^{\Omega,L_0}_t(x_0,\cdot)$ is strictly concave on $B(x_0,\lambda t)$ for all $0<t\leqslant t^*_0$ by the definition of $t^*_0$, and then there exists a unique maximizer $y^0_{t,x}$ of $v(\cdot)-A^{\Omega,L_0}_t(x_0,\cdot)$ which is contained in $\SING$ by Lemma \ref{prop_sing_no_uniquness}. Define
	$$
	\mathbf{x}_{0}(t)=\begin{cases}
		y^0_{t,x},& t\in(0,t^*_0],\\
		x_0,& t=0.
	\end{cases}
	$$
	Using the same arguments in \cite{Cannarsa-Cheng3}, $\mathbf{x}_{0}:[0,t^*_0]\to\Omega$ is Lipschitz and $\mathbf{x}_{0}$ is a generalized characteristic starting from $x_0$. 
	
	To obtain a global extension, we should use the monotonicity property of $v$ along the generalized characteristics (see, for instance, \cite[Theorem 3.7]{Cannarsa-Cheng}), i.e.,
	$$
	v(\mathbf{x}_{0}(t_1))\leqslant v(\mathbf{x}_{0}(t_2)),\quad t_1<t_2.
	$$
	Let $x_1=\mathbf{x}_0(t^*_0)$. Then $\Lambda_{x_1}\subset\Lambda_{x_0}$ and $r_1=\frac 12d(\partial\Omega,\Lambda_{x_1})\geqslant r_0$. Thus, define
	$$
	t^*_{1}=t^*_0=\min\frac 12\left\{\frac{r_0}{2\lambda},\frac {r_0}{\kappa(\lambda)}, t_{\lambda}, \frac {C(\lambda)r_0}{\bar{C}}\right\}\leqslant\min\frac 12\left\{T(x_1),\frac {r_1}{\kappa(\lambda)}, t_{\lambda}, \frac {C(\lambda)r_1}{\bar{C}}\right\},
	$$
	and $v(\cdot)-A^{\Omega,L_0}_t(x_1,\cdot)$ has a unique maximizer $y^1_{t,x}$ in $B(x_1,\lambda t)$ for all $0<t\leqslant t^*_1$. Define
	$$
	\mathbf{x}_{1}(t)=\begin{cases}
		y^1_{t,x},& t\in(0,t^*_1],\\
		x_1,& t=0,
	\end{cases}
	$$
	and $\mathbf{x}_{1}:[0,t^*_1]\to\Omega$ is a generalized characteristic starting from $x_1$ and $\mathbf{x}_1(t)\in\SING$ for all $t\in[0,t^*_1]$. Deductively, for any nonnegative integer $k$, there exists $\mathbf{x}_{k}:[0,t^*_k]\to\Omega$ being Lipschitz and $\mathbf{x}_{k}$ is a generalized characteristic starting from $x_k=\mathbf{x}_{k-1}(t^*_{k-1})$. Define $\sum_{i=0}^{k-1}t^*_i=s_k$ and $s_0=0$, then $\sum^{\infty}_{i=0}t^*_i=+\infty$ since $t^*_k=t^*_{k+1}$ for all $k$. Thus, $\mathbf{x}:[0,+\infty)\to\Omega$ defined by,
	\begin{align*}
		\mathbf{x}(t)=\mathbf{x}_k(t-s_k), \quad t\in[s_k,s_{k+1}],  \ k=0,1,\ldots,
	\end{align*}
	is an expected singular generalized characteristic starting form $x_0$, and   \eqref{eq:generalized_gradient} follows from \cite[Proposition 3.6]{Cannarsa-Cheng}.
\end{proof}

\begin{Rem}\label{mech_general}
For the aforementioned mechanical systems, the generalized characteristic \eqref{eq:generalized_gradient} can produce a semiflow and $\Lambda_x$ is an invariant set of the semiflow. Therefore, the associated generalized characteristic cannot hit the boundary forever (compare to the statement in Theorem \ref{propagation1}).
\end{Rem}

\subsubsection{Topology of cut locus of $u$}

In the context of classical weak KAM theory, the topology of $\CUT$ and $\SING$ with respect to a weak KAM solution $u$ on compact manifold has been studied in \cite{Cannarsa-Cheng-Fathi}. In this section, we will explain certain techniques used in \cite{Cannarsa-Cheng-Fathi} can also be applied to the value function $u$ of \eqref{eq:represent_formulae}.

\begin{Lem}\label{homotopy}
 let $L$ be as in \eqref{eq:mech}. There exists  a (continuous) homotopy $F:M\times [0,1]\to\Omega$, with the following properties:
\begin{enumerate}[\rm (a)]
  \item for all $x\in\Omega$, we have $F(x,0)=x$;
  \item if $F(x,s)\not\in\SING$ for some $s>0$ and $x\in\Omega$, then the curve $\sigma\mapsto F(x,\sigma)$ is $u$-calibrated on $[0,s]$;
  \item if there exists a $u$-calibrated curve $\xi:[0,s]\to\Omega$ with $\xi(0)=x$, then $\sigma\mapsto F(x,\sigma)=\xi(\sigma)$ for every $\sigma\in [0,\min\{s,1\}]$.
\end{enumerate}
\end{Lem}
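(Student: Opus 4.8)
The plan is to construct the homotopy $F$ by flowing along the generalized characteristics produced in Theorem \ref{thm_mech}, but reparametrized so that calibrated curves are followed exactly on the time interval $[0,1]$ and singular continuations (which are not calibrated) occupy the remaining "time budget" in a way that still depends continuously on the base point. The first step is to recall that, for mechanical systems of the form \eqref{eq:mech}, Theorem \ref{thm_mech} gives, for every $x\in\Omega$, a \emph{unique} singular generalized characteristic $s\mapsto\mathbf{x}(s;x)$ with $\mathbf{x}(0;x)=x$ defined on $[0,+\infty)$; uniqueness plus standard continuous-dependence estimates for the associated semiflow (Remark \ref{mech_general}, and the arguments of \cite{Cannarsa-Cheng,Cannarsa-Cheng3}) yield joint continuity of $(s,x)\mapsto\mathbf{x}(s;x)$. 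For $x\notin\SING$ there is moreover a genuine $u$-calibrated arc emanating from $x$ (the arc $\gamma_x$ of Lemma \ref{pre}, equivalently Proposition \ref{key1} with $p=Du(x)$), and the construction in Theorem \ref{thm_mech} is set up precisely so that the generalized characteristic coincides with this calibrated arc until it first meets $\SING$.

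Next I would define $F$ as a time change of this flow. Concretely, let $\tau(x)\in[0,+\infty]$ be the first time the characteristic from $x$ enters $\SING$ (so $\tau(x)=0$ when $x\in\SING$, $\tau(x)>0$ otherwise, and $\tau$ is lower semicontinuous), and set, for $\sigma\in[0,1]$,
\begin{equation*}
	F(x,\sigma)=\mathbf{x}\big(\min\{\sigma,\tau(x)\}+\psi(x,\sigma);x\big),
\end{equation*}
where $\psi\geqslant0$ is a correction term, continuous in $(x,\sigma)$, vanishing when $\sigma\leqslant\tau(x)$, and chosen so that the parametrization remains continuous across the locus $\{\sigma=\tau(x)\}$ (this is where the reparametrization of the partial step lengths $t^*_k$ in the proof of Theorem \ref{thm_mech} is absorbed). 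Property (a) is then immediate: $F(x,0)=\mathbf{x}(0;x)=x$. Property (c): if a $u$-calibrated $\xi:[0,s]\to\Omega$ with $\xi(0)=x$ exists, then by Proposition \ref{pro_calibrated}(a) every point $\xi(\sigma)$, $\sigma<s$, lies on a calibrated curve that is not terminal, hence $\xi(\sigma)\notin\CUT\supset\SING$; so $\tau(x)\geqslant s$, the correction $\psi$ vanishes on $[0,\min\{s,1\}]$, and $F(x,\sigma)=\mathbf{x}(\sigma;x)$, which by the uniqueness of the calibrated arc (Lemma \ref{pre}/Proposition \ref{key1}) and of the generalized characteristic must equal $\xi(\sigma)$ there. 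Property (b): if $F(x,s)\notin\SING$ for some $s>0$, then necessarily $\sigma\leqslant\tau(x)$ for all $\sigma\in[0,s]$ (once the characteristic enters $\SING$ it stays there by Theorem \ref{thm_mech}), so on $[0,s]$ the curve $\sigma\mapsto F(x,\sigma)$ is exactly the calibrated arc $\gamma_x$ restricted to $[0,s]$, hence $u$-calibrated.

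The main obstacle is establishing the \emph{continuity} of $F$ — in particular joint continuity of $(s,x)\mapsto\mathbf{x}(s;x)$ and the correct behaviour of $\tau$ and of the correction $\psi$ near points where $\tau$ jumps (i.e.\ near $\SING$ itself, where $\tau=0$, approached by points with $\tau$ bounded away from $0$). Here one must exploit that the step lengths $t^*_k$ in Theorem \ref{thm_mech} are \emph{constant} in $k$ (thanks to the monotonicity $\Lambda_{x_1}\subset\Lambda_{x_0}$ and $r_1\geqslant r_0$), so the flow is defined by iterating a single contraction-type map with locally uniform Lipschitz constant; combined with the strict concavity of $v(\cdot)-A^{\Omega,L_0}_t(x,\cdot)$ on $B(x,\lambda t)$ (Proposition \ref{C11_A_t}, Lemma \ref{semi_value}), which makes each maximizer $y^k_{t,x}$ depend continuously on the data, one gets the needed equicontinuity. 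This is exactly the scheme carried out in \cite{Cannarsa-Cheng-Fathi}, and I would follow that argument, checking only that the state-constraint modifications (restriction to $\Lambda_{x_0}\Subset\Omega$, use of $A^{\Omega,L_0}_t$ in place of $A_t$) do not affect any of the estimates — which they do not, since everything takes place in a fixed compact subset of $\Omega$ where $A^{\Omega,L_0}_t=A^{L_0}_t$.
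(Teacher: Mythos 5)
Your core idea — build the homotopy from the (unique) generalized characteristic semiflow of \eqref{eq:generalized_gradient}, then invoke uniqueness together with the facts that a calibrated arc is an extremal satisfying \eqref{eq:generalized_gradient} and that once a characteristic enters $\SING$ it stays there — is precisely what the paper does. Where you diverge is the time change $\sigma\mapsto\min\{\sigma,\tau(x)\}+\psi(x,\sigma)$, and that detour is both unnecessary and incomplete.

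The paper simply takes $F(x,\sigma)=\mathbf{x}_x(\sigma)$, where $\mathbf{x}_x$ is the unique solution of \eqref{eq:generalized_gradient} with $\mathbf{x}_x(0)=x$. No reparametrization is needed for (c): along a $u$-calibrated arc $\xi:[0,s]\to\Omega$ one has $Dv(\xi(\sigma))=A(\xi(\sigma))\dot\xi(\sigma)$, so $\xi$ itself solves \eqref{eq:generalized_gradient} with its natural time parameter, and uniqueness gives $\mathbf{x}_x(\sigma)=\xi(\sigma)$ on $[0,\min\{s,1\}]$. For (b), the paper uses the concatenated-maximizer structure of $\mathbf{x}_x$ from the proof of Theorem \ref{thm_mech} together with Lemma \ref{prop_sing_no_uniquness}: if $F(x,s)\notin\SING$, then block by block one deduces $\mathbf{x}_{k_0}(0)\notin\CUT$ and each block $\mathbf{x}_k$ is $u$-calibrated, whence so is $\sigma\mapsto F(x,\sigma)$ on $[0,s]$.

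The gap in your version is the correction term $\psi$. You never construct it, and it is unclear what it could be: $\tau$ is a first-hitting time of $\SING$, which is not a closed set, so $\tau$ need not even be lower semicontinuous as you assert; and for $F$ to be jointly continuous you are essentially forced into $\psi(x,\sigma)=\max\{0,\sigma-\tau(x)\}$, at which point the time change $\min\{\sigma,\tau(x)\}+\psi(x,\sigma)$ collapses to $\sigma$ and your $F$ is the paper's $F$. The reparametrization therefore adds nothing while introducing an unverified continuity claim and a spurious stopping time. Dropping the split at $\tau(x)$ entirely gives the paper's shorter, correct argument. (The one continuity input that is genuinely needed — joint continuity of $(\sigma,x)\mapsto\mathbf{x}_x(\sigma)$ — you correctly attribute to the uniqueness and semiflow estimates of \cite{Cannarsa-Cheng}, which is exactly what the paper relies on as well.)
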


\begin{proof}
	For any $x\in\Omega$, let $\mathbf{x}_x$ be the unique generalized characteristic starting from $x$ defined by \eqref{eq:generalized_gradient} (for the uniqueness of the solution of the differential inclusion \eqref{eq:generalized_gradient}, the readers can refer to \cite{Cannarsa-Cheng}). Then, the homotopy is defined by $F(x,\sigma)=\mathbf{x}_x(\sigma)$ for all $\sigma\in[0,1]$.
	
	(a) follows from the definition of $F$ directly and (c) follows from the fact that a $u$-calibrated curve is an extremal curve and it also satisfies \eqref{eq:generalized_gradient}.
	
	Now, we turn to the proof of (b). For each $x\in\Omega$, the generalized characteristic $\mathbf{x}_x$ has the form
	\begin{align*}
		\mathbf{x}_x(t)=\mathbf{x}_k(t-s_k), \quad t\in[s_k,s_{k+1}],\ k=0,1,\ldots,
	\end{align*}
	as in the proof of Theorem \ref{thm_mech}, since the uniqueness of the solution of \eqref{eq:generalized_gradient} with $\mathbf{x}_x(0)=x$. Recall that $t^*_k=t^*_{k+1}$ (for $k=0,1\ldots$), $\sum_{i=0}^{k-1}t^*_i=s_k$ and $s_0=0$. Furthermore, $\mathbf{x}_k(\sigma)$ is the unique maximizer of the function $u(\cdot)-A_{\sigma}(\mathbf{x}_k(0),\cdot)$ for $\sigma\in(0,t^*_k]$. Now, suppose $F(x,s)\not\in\SING$ for some $s>0$. Then there exist $k_0\in\N$ and $\sigma_0\in(0,t^*_{k_0}]$ such that $s=s_{k_0}+\sigma_0$ and $\mathbf{x}_{k_0}(\sigma_0)\not\in\SING$, it follows $\mathbf{x}_{k_0}(0)\not\in\SING$ by Lemma \ref{prop_sing_no_uniquness} and $\mathbf{x}_k:[0,\sigma_0]\to\Omega$ is a $u$-calibrated curve. By deduction, it shows that $\mathbf{x}_k:[0,t^*_k]\to\Omega$ is a $u$-calibrated curve. Finally, by the uniqueness of the solution of \eqref{eq:generalized_gradient} again, the curve $\sigma\mapsto F(x,\sigma)$ is $u$-calibrating on $[0,s]$. This completes the proof.
\end{proof}

\begin{The}\label{topology_thm_1}
The inclusion $\SING\subset\CUT\subset\BSING\cap\Omega\subset\Omega$ are all homotopy equivalences. Moreover, for every connected component $C$ of $\Omega$ the three intersections $\SING\cap C$, $\CUT\cap C$, and $\BSING\cap C$ are path-connected.
\end{The}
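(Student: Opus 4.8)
The plan is to transcribe the scheme of \cite{Cannarsa-Cheng-Fathi}, whose only structural input is a homotopy with the three properties of Lemma \ref{homotopy}. Write $\mathbf{x}_x:[0,+\infty)\to\Omega$ for the unique singular generalized characteristic issued from $x$, so that $\mathbf{x}_x(\sigma)=F(x,\sigma)$ for $\sigma\in[0,1]$; by Remark \ref{mech_general} and \cite{Cannarsa-Cheng} it is globally defined and the semiflow $(x,\sigma)\mapsto\mathbf{x}_x(\sigma)$ is continuous (for $\sigma>1$ one uses the semiflow property and Lemma \ref{homotopy}). Three invariance facts will be used repeatedly. By Theorem \ref{thm_mech}, $\mathbf{x}_x(\sigma)\in\SING$ for $\sigma>0$ whenever $x\in\CUT$, so the semiflow carries $\SING\times(0,+\infty)$ and $\CUT\times(0,+\infty)$ into $\SING$; by continuity of the semiflow together with density of $\SING$ in $\BSING$, it carries $(\BSING\cap\Omega)\times[0,+\infty)$ into $\BSING\cap\Omega$; and, since characteristics are connected curves in $\Omega$, everything restricts to each connected component $C$ of $\Omega$.

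The first substantial step is a \emph{uniform exit estimate}. Because $L_0(x,v)=\tfrac12\langle A(x)v,v\rangle-V(x)\geqslant-\max_{\overline{\Omega}}V=:\alpha_0>0$ for all $(x,v)\in\overline{\Omega}\times\R^n$, any $v$-calibrated curve $\xi:[0,\sigma]\to\overline{\Omega}$ satisfies $\alpha_0\sigma\leqslant\int_0^\sigma L_0(\xi,\dot\xi)=v(\xi(\sigma))-v(\xi(0))\leqslant\operatorname{osc}_{\overline{\Omega}}v$; hence the cut time $\tau(x):=\sup\{\sigma\geqslant0:\mathbf{x}_x\vert_{[0,\sigma]}\text{ is }v\text{-calibrated}\}$ is bounded above by $\mathcal{T}:=(\operatorname{osc}_{\overline{\Omega}}v)/\alpha_0$, uniformly in $x\in\Omega$ (note $\mathcal{T}>0$ whenever $\CUT\neq\varnothing$, since a constant $v$ cannot solve $H_0(x,Dv)=0$). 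Next, $\mathbf{x}_x(\tau(x))\in\CUT$: on $[0,\tau(x)]$ the curve $\mathbf{x}_x$ is a calibrated minimizer, so if $\mathbf{x}_x(\tau(x))$ were a differentiability point of $v$ not lying in $\CUT$, the unique calibrated extremal through it would prolong $\mathbf{x}_x$ as a calibrated curve beyond $\tau(x)$, contradicting the definition of $\tau$, while a non-differentiability point automatically belongs to $\CUT$. Combining this with the invariance facts (once the characteristic reaches $\CUT$ it stays in $\SING$), we obtain $\mathbf{x}_x(\sigma)\in\CUT$ for every $\sigma\geqslant\mathcal{T}$ and every $x\in\Omega$.

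Now set $G:\Omega\times[0,1]\to\Omega$, $G(x,s)=\mathbf{x}_x(s\mathcal{T})$; it is continuous, $G(\cdot,0)=\mathrm{id}_\Omega$, $G(\Omega\times\{1\})\subset\CUT$, and $G$ maps $\CUT\times[0,1]$, $\SING\times[0,1]$, $(\BSING\cap\Omega)\times[0,1]$ into $\CUT$, $\SING$, $\BSING\cap\Omega$ respectively, while $G(\CUT\times\{1\})\subset\SING$. Thus $G$ together with $r:=G(\cdot,1)$ exhibits $\CUT$ and $\BSING\cap\Omega$ as (weak) deformation retracts of $\Omega$, and $G\vert_{\CUT\times[0,1]}$ exhibits $\SING$ as a weak deformation retract of $\CUT$; since a weak deformation retraction provides an explicit homotopy inverse to the inclusion, $\SING\hookrightarrow\CUT$, $\CUT\hookrightarrow\Omega$ and $\BSING\cap\Omega\hookrightarrow\Omega$ are homotopy equivalences, and $\CUT\hookrightarrow\BSING\cap\Omega$ is one as well because $g$ and $g\circ f$ homotopy equivalences force $f$ to be one, applied to $\CUT\hookrightarrow\BSING\cap\Omega\hookrightarrow\Omega$. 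This proves the first assertion. Repeating the construction inside a connected component $C$ of $\Omega$ (which the whole semiflow preserves) shows $\SING\cap C$, $\CUT\cap C$, $\BSING\cap C$ are each homotopy equivalent to $C$; as $C$ is a connected open subset of $\R^n$ it is path-connected, and a homotopy equivalence induces a bijection of path components, so each of these three sets is path-connected.

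I expect the delicate point to be the exit estimate of the second paragraph — concretely the identification $\mathbf{x}_x(\tau(x))\in\CUT$, which rests on the fine local structure of calibrated curves (uniqueness of the calibrated extremal and continuity of the dual arc along the interior of a calibrated minimizer, cf.\ Proposition \ref{key1} and Proposition \ref{Tonelli_straint}), together with the continuity and global well-posedness of the singular semiflow generated by \eqref{eq:generalized_gradient}, which are invoked from \cite{Cannarsa-Cheng}; everything else is the soft topological machinery of (weak) deformation retracts.
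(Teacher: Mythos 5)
Your proposal is correct and follows the same route the paper intends: the paper defers the argument to the scheme of \cite{Cannarsa-Cheng-Fathi}, which you have faithfully transcribed — the homotopy of Lemma~\ref{homotopy}, a uniform bound on the calibration/cut time coming from $L_0\geqslant -\max_{\overline\Omega}V>0$ and the monotonicity of $v$ along calibrated curves, the semiflow invariance of $\SING$, $\CUT$, $\BSING\cap\Omega$, and the soft weak-deformation-retract bookkeeping. The only cosmetic remark is that your parenthetical ``$\mathcal{T}>0$ whenever $\CUT\neq\varnothing$'' is superfluous: $v$ is never constant because $H_0(x,0)=V(x)<0$ on $\overline\Omega$, so $\operatorname{osc}_{\overline\Omega}v>0$ unconditionally.
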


\begin{The}\label{topology_thm_2}
The spaces $\SING$ and $\CUT$ are locally contractible, i.e., for every $x\in\SING$ (resp. $x\in\CUT$) and every neighborhood $V$ of $x$ in $\SING$ (resp. $\CUT$), we can find a neighborhood $W$ of $x$ in $\SING$ (resp. $\CUT$), such that $W\subset V$ and $W$ in null-homotopic in $V$. 

Therefore, $\SING$ and $\CUT$ are locally path connected.	
\end{The}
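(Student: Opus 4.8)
The plan is to adapt the argument of \cite{Cannarsa-Cheng-Fathi} almost verbatim, the only real tool being the homotopy $F$ of Lemma \ref{homotopy}. The first point, which is also what is used for Theorem \ref{topology_thm_1}, is that $F$ restricts to continuous self-homotopies of $\SING$ and of $\CUT$: since $F(\cdot,0)=\mathrm{id}$ it is enough to check $F(\CUT\times(0,1])\subset\SING$, and if $x\in\CUT$ had $F(x,\sigma)\notin\SING$ for some $\sigma>0$, then by Lemma \ref{homotopy}(b) the curve $\tau\mapsto F(x,\tau)$ would be $u$-calibrated on $[0,\sigma]$, hence so would all its restrictions (Proposition \ref{pro_calibrated}(a)), and the cut-point property would force $F(x,\tau)=x$ for every $\tau\in(0,\sigma]$, giving $F(x,\sigma)=x\in\SING$, a contradiction. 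In particular $F$ also restricts to $\SING\subset\CUT$.

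Fix now $x_0\in\SING$ (the case $x_0\in\CUT$ is identical) and a neighborhood $V$ of $x_0$ in $\SING$; choose $\rho>0$ with $\overline{B(x_0,\rho)}\subset\Omega$ and $B(x_0,\rho)\cap\SING\subset V$. Since $F$ is continuous and $F(x_0,0)=x_0$, there are $r\in(0,\rho)$ and $\tau\in(0,1]$ with $F\big(\overline{B(x_0,r)}\times[0,\tau]\big)\subset B(x_0,\rho)$. Here it is important that $x_0$ is an interior point: the whole construction takes place in the fixed ball $\overline{B(x_0,\rho)}$, compactly contained in $\Omega$, where the generalized characteristic flow $\Phi_\sigma(y):=F(y,\sigma)$ has a uniform speed bound (from $\dot{\mathbf{x}}^+\in A^{-1}(\mathbf{x})D^+v(\mathbf{x})$, the boundedness of $D^+v$ since $v$ is Lipschitz, and the continuity of $A^{-1}$), and where all the semiconcavity and convexity constants of Lemma \ref{semi_value} and Proposition \ref{C11_A_t} are uniform; thus the merely local semiconcavity of $u$ causes no trouble. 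Set $W:=B(x_0,r)\cap\SING$, a neighborhood of $x_0$ in $\SING$ with $W\subset V$.

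I would then assemble the null-homotopy $G\colon W\times[0,1]\to B(x_0,\rho)\cap\SING\subset V$ of the inclusion $W\hookrightarrow V$ exactly as in \cite{Cannarsa-Cheng-Fathi}: on $[0,\tfrac12]$ one flows, $G(y,t)=F(y,2\tau t)$, which keeps $W$ inside $B(x_0,\rho)\cap\SING$ and moves it onto its time-$\tau$ image $\Phi_\tau(W)$; on $[\tfrac12,1]$ one collapses $\Phi_\tau(W)$ onto the single point $F(x_0,\tau)\in\SING\cap V$, sliding along the orbit arc $\sigma\mapsto F(x_0,\sigma)$ of the base point and using the semiflow and merging structure of generalized characteristics, their uniqueness (Theorem \ref{thm_mech}) and the monotonicity of $v$ along them. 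This yields local contractibility; local path-connectedness follows formally, since the two paths $t\mapsto G(w_i,t)$ join any $w_1,w_2\in W$ inside $V$.

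The step I expect to be the main obstacle is the second half of $G$, namely collapsing $\Phi_\tau(W)$ to $F(x_0,\tau)$ within $\SING\cap V$: this rests on the quantitative control of how nearby generalized characteristics interact --- Lemma \ref{prop_sing_no_uniquness}, the strict concavity of $y\mapsto v(y)-A^{\Omega,L_0}_t(x_0,y)$ on small balls, and Corollary \ref{exit_time_prob} --- which is precisely the heart of the method of \cite{Cannarsa-Cheng3,Cannarsa-Cheng-Fathi} and has to be transcribed with care to the present boundary-value setting; once that is in place the rest is routine.
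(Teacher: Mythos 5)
Your approach matches the paper's: the paper's ``proof'' of Theorem \ref{topology_thm_2} consists entirely of the remark following Lemma \ref{homotopy}, namely that the arguments of Theorem 1.3 in \cite{Cannarsa-Cheng-Fathi} carry over verbatim once one replaces $M\setminus\mathcal{I}(u)$ by $\Omega$ and uses the homotopy $F$ constructed in Lemma \ref{homotopy}, which is exactly the strategy you propose. Your sketch (restrict to a ball compactly contained in $\Omega$ to get uniform constants, run the flow $F$ to stay in $\SING$, collapse along the singular orbit of the base point) is a reasonable reading of the Cannarsa--Cheng--Fathi argument, and you correctly flag the collapse step as the part whose quantitative details must be imported from \cite{Cannarsa-Cheng3,Cannarsa-Cheng-Fathi}; the paper offers no more detail than you do.
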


The proofs of Theorem \ref{topology_thm_1} and Theorem \ref{topology_thm_2} are based on the homotopy constructed by Lemma \ref{homotopy} and they are quite similar to the ones for Theorem 1.1, Corollary 1.2 and Theorem 1.3 in \cite{Cannarsa-Cheng-Fathi}. The only difference is to replace $M\setminus\IU$ (in \cite{Cannarsa-Cheng-Fathi}) by $\Omega$ where $\IU$ is the \textit{projected Aubry set with respect to $u$}. Furthermore, the topological results in Theorem \ref{topology_thm_1} and \ref{topology_thm_2} also hold under the same assumptions of  Theorem \ref{propagation-global} by the similar reason.

\subsubsection{On the critical points of $u$} 
For $L$ be defined by \eqref{eq:mech}, Theorem \ref{thm_mech} shows that if $x\in\CUT$, then the unique generalized characteristic $\mathbf{x}:[0,+\infty)\to\Omega$ defined by \eqref{eq:generalized_gradient} governs the propagation of singularities of $u$. Now, let us recall some basic facts of such generalized characteristics (see, for instance, \cite{Cannarsa-Cheng} for the proof).

\begin{Pro}\label{properties_g_c}
Let $L=L_0$ be as in \eqref{eq:mech2} and let $\mathbf{x}:[0,+\infty)\to\Omega$ be defined by \eqref{eq:generalized_gradient} with $\mathbf{x}(0)=x_0$. Then we have the following properties:
\begin{enumerate}[\rm (a)]
  \item if $x$ belongs to $\SING$, then $\mathbf{x}$ is a unique solution of \eqref{eq:generalized_gradient} such that $\mathbf{x}(0)=x$ and $\mathbf{x}(s)\in\SING$ for all $s\in [0,+\infty)$;
  \item $\mathbf{x}$ is Lipschitz, the right derivative $\dot{\mathbf{x}}^+(s)$ exists for all $s\in[0,+\infty)$, and
\begin{equation*}\label{generalized_characteristics_mech_sys}
\dot{\mathbf{x}}^+(s)=A^{-1}(\mathbf{x}(s))p(s),\quad\forall s\in[0,+\infty),
\end{equation*}
where $p(s)$ is the unique point of $D^+u(\mathbf{x}(s))$ such that
\begin{equation}\label{minimality}
\langle A^{-1}(\mathbf{x}(s))p(s),p(s)\rangle=\min_{p\in D^+u(\mathbf{x}(s))}\langle A^{-1}(\mathbf{x}(s))p,p\rangle.
\end{equation}
Moreover, $\dot{\mathbf{x}}^+(s)$ is right-continuous;
   \item the right derivative of $u(\mathbf{x}(\cdot))$ exists on $[0,+\infty)$ and is given by
   \begin{equation}\label{eq:dv_c}
   	\frac d{ds^+}u(\mathbf{x}(s))=\langle p(s),A^{-1}(\mathbf{x}(s))p(s)\rangle,\quad s\in[0,+\infty).
   \end{equation}
\end{enumerate}
\end{Pro}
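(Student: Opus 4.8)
The plan is to reduce everything to the variational construction already performed in the proof of Theorem~\ref{thm_mech} together with the standard theory of generalized characteristics for Hamiltonians whose $p$-gradient is affine. The starting point is that for $L_0$ as in \eqref{eq:mech2} the associated Hamiltonian is $H_0(x,p)=\tfrac12\langle A^{-1}(x)p,p\rangle+V(x)$, so $H_{0,p}(x,p)=A^{-1}(x)p$ is affine in $p$. Hence, for every $x\in\Omega$,
\begin{equation*}
\mathrm{co}\,H_{0,p}\big(x,D^+u(x)\big)=A^{-1}(x)\,\mathrm{co}\,D^+u(x)=A^{-1}(x)D^+u(x),
\end{equation*}
because $D^+u(x)$ is already convex; so a generalized characteristic of \eqref{eq:HJ_BP} in the sense of \eqref{generalized_characteristics} is exactly a Lipschitz solution of \eqref{eq:generalized_gradient}. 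Moreover, since $A^{-1}(x)$ is symmetric positive definite and $D^+u(x)$ is compact convex, $p\mapsto\langle A^{-1}(x)p,p\rangle$ attains its minimum over $D^+u(x)$ at a unique point $p(x)$, which is the selection in \eqref{minimality}.

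For existence and the formula for $\dot{\mathbf{x}}^+$ I would reuse the marching scheme of Theorem~\ref{thm_mech}: on each interval $[0,t^*_k]$ the curve $\mathbf{x}_k$ is built from the unique maximizer $y^k_{t,x_k}$ of $u(\cdot)-A^{\Omega,L_0}_t(x_k,\cdot)$ on $B(x_k,\lambda t)$. From $0\in D^+\{u(\cdot)-A^{\Omega,L_0}_t(x_k,\cdot)\}(y^k_{t,x_k})$ one gets $p_t:=D_yA^{\Omega,L_0}_t(x_k,y^k_{t,x_k})\in D^+u(y^k_{t,x_k})$, and the $C^{1,1}$ estimates of Proposition~\ref{C11_A_t} give $p_t=\tfrac1t A(x_k)\big(y^k_{t,x_k}-x_k\big)+o(1)$ as $t\to0^+$. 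Feeding this back into the maximality inequality — which says $y^k_{t,x_k}$ is pushed as far as possible from $x_k$ in the $A(x_k)$-metric subject to the constraint from $D^+u$ — and using the semiconcavity estimate of Lemma~\ref{semi_value}, one shows that $\big(y^k_{t,x_k}-x_k\big)/t$ converges, as $t\to0^+$, to the $A^{-1}(x_k)$-minimal element of $D^+u(x_k)$. This yields $\dot{\mathbf{x}}_k^+(0)=A^{-1}(x_k)p(x_k)$ and, running the same argument at each point of $(0,t^*_k]$, the formula $\dot{\mathbf{x}}^+(s)=A^{-1}(\mathbf{x}(s))p(s)$ with $p(s)$ as in \eqref{minimality}; Lipschitz continuity of $\mathbf{x}$ follows from $|y^k_{t,x_k}-x_k|\leqslant\lambda t$ together with $t^*_k=t^*_{k+1}$.

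Uniqueness in (a) and the right-continuity of $\dot{\mathbf{x}}^+$ in (b) rely on a one-sided Lipschitz (dissipativity-type) property of the set-valued map $x\rightsquigarrow A^{-1}(x)D^+u(x)$, which holds because $u$ is locally semiconcave; this is the usual mechanism giving uniqueness of generalized characteristics for mechanical systems, via a Gronwall/Filippov argument. Combined with Lemma~\ref{prop_sing_no_uniquness} and the construction above, uniqueness also yields the invariance $\mathbf{x}(s)\in\SING$ claimed in (a). Right-continuity of $\dot{\mathbf{x}}^+$ then follows from the upper semicontinuity of $x\rightsquigarrow D^+u(x)$ (Proposition~\ref{basic_facts_of_superdifferential}), the continuity of $A^{-1}$, and the uniqueness of the $A^{-1}$-minimal selection, through a subsequence argument. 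Finally, for (c), $u\circ\mathbf{x}$ is locally Lipschitz; since $u$ is semiconcave near $\mathbf{x}(s)$ and $\dot{\mathbf{x}}^+(s)$ exists, one has $\frac{d}{ds^+}u(\mathbf{x}(s))=\min_{p\in D^+u(\mathbf{x}(s))}\langle p,\dot{\mathbf{x}}^+(s)\rangle$, and substituting $\dot{\mathbf{x}}^+(s)=A^{-1}(\mathbf{x}(s))p(s)$ together with the first-order optimality condition for \eqref{minimality} (which forces $\langle p,A^{-1}(\mathbf{x}(s))p(s)\rangle\geqslant\langle p(s),A^{-1}(\mathbf{x}(s))p(s)\rangle$ for all $p\in D^+u(\mathbf{x}(s))$) gives \eqref{eq:dv_c}.

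The main obstacle is the one-sided Lipschitz estimate underlying uniqueness of \eqref{eq:generalized_gradient} and, closely related, the limit $t\to0^+$ identifying $\dot{\mathbf{x}}^+$ with the $A^{-1}$-minimal selection, which has to be handled with care at the junction times $s_k$ where only the right derivative survives. All of this is carried out in \cite{Cannarsa-Cheng}, so the argument here essentially reduces to checking that the state-constrained value function $u$ of \eqref{eq:represent_formulae} and its local regularity (Proposition~\ref{compatible_u}, Lemma~\ref{semi_value}, Proposition~\ref{C11_A_t}) supply exactly the hypotheses needed to invoke those results.
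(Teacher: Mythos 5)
Your proposal matches the paper's approach: the paper provides no proof at all here, simply recalling these facts as standard and citing \cite{Cannarsa-Cheng}, and your argument is exactly the adaptation of that reference's machinery (affine $H_{0,p}$, uniqueness of the $A^{-1}$-minimal selection, one-sided Lipschitz/dissipativity for uniqueness and right-continuity, chain rule for semiconcave functions) to the state-constrained value function, which you correctly frame as a hypothesis-check rather than a new proof.
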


\begin{defn}[Critical point]
	We say that $x\in\Omega$ is a critical point of $u$, if $0\in  \mathrm{co}\ \frac{\partial H}{\partial p}(x,D^+u(x)).$
\end{defn}
From Proposition \ref{properties_g_c}, it is clear that the propagation will halt at a critical point.

\begin{The}\label{th:critical1}
Let $L$ be defined by \eqref{eq:mech}. If $\mathcal{K}$ is a connected component of $\SING$, then $\mathcal{K}$  contains a critical point of $u$. 
\end{The}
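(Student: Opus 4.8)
We outline a proof along the following lines. The plan is to ride the unique singular generalized characteristic issued from a point of $\mathcal{K}$ and to exploit that the solution increases along it but is bounded: the characteristic must slow down asymptotically, and its accumulation point will be a critical point lying in $\mathcal{K}$.

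\emph{Step 1 (reduction, and two facts).} As in the proof of Theorem~\ref{thm_mech}, normalize $g+S\equiv 0$ and pass to $v=u+S$, which by Proposition~\ref{exact_form} is $v(x)=\inf_{y\in\partial\Omega}\Phi^{\Omega}_{L_0}(y,x)$ and solves $H_0(x,Dv)=0$ in $\Omega$ in the viscosity sense; $\SING$ is unchanged, and $x$ is a critical point of $u$ iff $0\in D^+v(x)$. First, $v\equiv 0$ on $\partial\Omega$ while $v>0$ on $\Omega$: by Corollary~\ref{exit_time_prob} (for $v,L_0$), $v(x)=A^{\Omega,L_0}_{T(x)}(y^*,x)\geqslant T(x)\,\bigl|\max_{\overline\Omega}V\bigr|>0$ for every $x\in\Omega$ because $L_0\geqslant -V$; hence, for any $x_0\in\Omega$, the superlevel set $\Lambda_{x_0}:=\{x\in\overline\Omega:v(x)\geqslant v(x_0)\}$ is a \emph{compact subset of $\Omega$}. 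Second, at every differentiability point of $v$ the equation reads $\tfrac12\langle A^{-1}(x)Dv(x),Dv(x)\rangle=-V(x)>0$, so $Dv(x)\neq 0$; consequently $0\in D^+v(x)$ rules out $D^+v(x)$ being a singleton, i.e. \emph{every critical point of $u$ is automatically singular}.

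\emph{Step 2 (main argument).} Fix $x_0\in\mathcal{K}$. Since $\SING\subset\CUT$, Theorem~\ref{thm_mech} furnishes a generalized characteristic $\mathbf{x}:[0,+\infty)\to\Omega$ with $\mathbf{x}(0)=x_0$ and $\mathbf{x}(s)\in\SING$ for all $s$; its image, being connected, lying in $\SING$, and meeting $\mathcal{K}$, is contained in $\mathcal{K}$. By Proposition~\ref{properties_g_c}(c) (for $v,L_0$), $s\mapsto v(\mathbf{x}(s))$ is Lipschitz, nondecreasing, with right derivative $\langle p(s),A^{-1}(\mathbf{x}(s))p(s)\rangle\geqslant 0$, where $p(s)\in D^+v(\mathbf{x}(s))$ is selected by \eqref{minimality}; being bounded above by $\max_{\overline\Omega}v$, it converges to some $\ell$. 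Integrating the right derivative,
\[
\int_0^{+\infty}\bigl\langle p(s),A^{-1}(\mathbf{x}(s))p(s)\bigr\rangle\,ds=\ell-v(x_0)<+\infty ,
\]
so $\liminf_{s\to+\infty}\langle p(s),A^{-1}(\mathbf{x}(s))p(s)\rangle=0$; choose $s_k\to+\infty$ realizing this. Since the orbit stays in the compact set $\Lambda_{x_0}\subset\Omega$, on which $A^{-1}$ is uniformly positive definite, we get $p(s_k)\to 0$ and, passing to a subsequence, $\mathbf{x}(s_k)\to z\in\Lambda_{x_0}\subset\Omega$ with $z\in\overline{\mathcal{K}}$. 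By upper semicontinuity of the superdifferential (Proposition~\ref{basic_facts_of_superdifferential}(b)), $0\in D^+v(z)$, i.e. $z$ is a critical point of $u$, hence $z\in\SING$ by Step 1. Finally $\mathcal{K}$, being a connected component of $\SING$, is closed in $\SING$, so $\overline{\mathcal{K}}\cap\SING=\mathcal{K}$ and therefore $z\in\mathcal{K}$.

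\emph{The crux.} The essential point is Step~1: the supercriticality hypothesis $c_\Omega(L)=\max_{\overline\Omega}V<0$ is exactly what traps the singular characteristic inside a compact subset of $\Omega$, preventing the gain $\ell-v(x_0)$ from escaping through $\partial\Omega$ --- the obstruction that cannot be removed for general Tonelli Lagrangians in Theorem~\ref{propagation1}. The second subtlety is that the accumulation point must be a genuine singularity and not a regular critical point, which is forced by the eikonal identity since $V<0$; the rest (the integrability/$\liminf$ step, uniform ellipticity of $A^{-1}$ on $\Lambda_{x_0}$, closedness of $\mathcal{K}$ in $\SING$) is routine. Alternatively, Step~2 can be recast through the $\omega$-limit set of the semiflow generated by \eqref{eq:generalized_gradient} (Remark~\ref{mech_general}): it is a nonempty compact invariant subset of $\mathcal{K}$ on which $v$ is constant, hence consists of critical points by Proposition~\ref{properties_g_c}(c).
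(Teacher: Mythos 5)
Your proof is correct and follows the same overall strategy as the paper: ride the unique singular generalized characteristic issued from $x_0\in\mathcal{K}$, note that $v(\mathbf{x}(\cdot))$ increases at rate $\langle p(s),A^{-1}(\mathbf{x}(s))p(s)\rangle\geqslant 0$, and use boundedness of $v$ to force a sequence $s_j\to\infty$ along which this rate vanishes, then pass to the limit via upper semicontinuity of $D^+v$ and positive definiteness of $A^{-1}$.

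There is one place where your argument is actually tighter than the paper's. The paper asserts ``since $\mathcal{K}$ is compact'' to locate the accumulation point $\bar x$ in $\mathcal{K}$, but $\mathcal{K}$, being a connected component of $\SING$, is only closed in $\SING$, and $\SING$ need not be closed in $\R^n$ (nor need it stay away from $\partial\Omega$ a priori), so compactness of $\mathcal{K}$ is not obvious. You sidestep this by observing that the orbit lies in the compact set $\Lambda_{x_0}\subset\Omega$ (which was already established in the proof of Theorem~\ref{thm_mech}), extracting the limit $z$ there, and then noting the two extra facts needed to conclude $z\in\mathcal{K}$: (i) the eikonal identity $\tfrac12\langle A^{-1}Dv,Dv\rangle=-V>0$ forces $Dv\neq 0$ at differentiability points, so $0\in D^+v(z)$ implies $z\in\SING$; (ii) $\overline{\mathcal{K}}\cap\SING=\mathcal{K}$ because connected components are closed in their ambient space. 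Your Step~1 observation that critical points are automatically singular is exactly what makes the last step close. Your integral/$\liminf$ argument is also slightly more direct than the paper's contradiction (``suppose no such sequence exists, then $\inf>0$''), though the two are equivalent. The $\omega$-limit alternative you mention at the end is a legitimate repackaging of the same idea.
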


\begin{proof}
It is sufficient to prove our statement for $L=L_0$ which is given by \eqref{eq:mech2}. Note that $0\in\mathrm{co}\ \frac{\partial H_0}{\partial p}(x,D^+u(x))$ if and only if $0\in D^+u(x)$.
Fix  $x_0\in \mathcal{K}$ and let $\mathbf{x}:[0,+\infty)\to\Omega$ be the generalized singular characteristic with initial point $x_0$. Then $\mathbf{x}(t)\in \mathcal{K}$ for all $t\in [0,+\infty)$ and, by \eqref{eq:dv_c} we have
\begin{equation}
\label{eq:increase_of_v}
u(\mathbf{x}(t))-u(x_0)=\int_0^t\langle p(s),A^{-1}(\mathbf{x}(s))p(s)\rangle\,ds,\quad\forall t\geqslant 0,
\end{equation}
where $p(\cdot)$ satisfies \eqref{minimality}.

If $\mathbf{x}(t)$ is  critical of $u$ for some $t\geqslant 0$, then the conclusion has been proved. 

Suppose that $\mathbf{x}(t)$ is not a critical point for all $t\geqslant 0$. Then we assert
\begin{equation}
\label{eq:abs-abs}
\lim_{j\to\infty}\langle p(s_j),A^{-1}(\mathbf{x}(s_j))p(s_j)\rangle=0
\end{equation}
for some sequence $\{s_j\}$. 
Suppose not.  We have  
\begin{equation*}
\delta:=\inf_{s\geqslant 0} \langle p(s),A^{-1}(\mathbf{x}(s))p(s)\rangle>0\,,
\end{equation*}
then, appealing to \eqref{eq:increase_of_v}, we obtain
 \begin{equation*}
u(\mathbf{x}(t))-u(x_0)\geqslant \delta t, \quad\forall t\geqslant 0\,,
\end{equation*}
which contradicts the fact that $u$ is bounded on $\mathcal{K}$. Thus, \eqref{eq:abs-abs} is true.

Now, since $\mathcal{K}$ is compact and the set-valued map $x\leadsto D^+u(x)$ is upper semicontinuous, 
if necessary passing to a subsequence,we have $\mathbf{x}(s_j)\to \bar x\in \mathcal{K}$  and
 $p(s_j)\to \bar p\in D^+u(\bar x)$ as $j\to\infty$. Thus, 
 $\langle \bar p,A^{-1}(\bar x)\bar p\rangle=0$ by \eqref{eq:abs-abs}.
 Since  $A^{-1}(\bar x)$ is positive definite, then $\bar p=0$ and $\bar x$ is a critical point of $u$. 
\end{proof}

\section{More on global propagation of singularities} 
In the last section, under the following additional assumptions we will provide a global propagation result for general Tonelli Lagrangian systems.
In view of Theorem \ref{propagation1}, it suffices to show the semiconcavity and local semiconvexity of $u$ near $\partial \Omega$ for this purpose.

\begin{enumerate}[(\bf{G}1)]
    \item  There is $\nu\in[0,1)$ such that $g(y_1)-g(y_2)\leqslant \nu\Phi^{\Omega}_L(y_2,y_1)$, $\forall y_1,\, y_2\in\partial \Omega$.
  \item There exists $G\in C^{1,1}(\Gamma_\delta)$ for some $\delta>0$, such that $g=G|_\Gamma$, and  for any $x$, $y\in\Gamma$, we have
  \begin{align}\label{g2}
  \langle \nabla G(x),x-y\rangle\leqslant \breve{C}|x-y|^2
  \end{align}
  for some $\breve{C}>0$ independent of $x$ and $y$, where $\Gamma=\partial\Omega$ and $\Gamma_\delta$ denotes the $\delta$-neighborhood of $\Gamma$.
\end{enumerate}

\begin{Rem}
We will take a closer look at conditions ({\bf G1}), ({\bf G2}):
\begin{enumerate}[$\bullet$]
	\item if $g$ is constant on $\partial\Omega$, then conditions ({\bf G1}) and ({\bf G2}) hold;
	\item condition ({\bf G2}) implies that: there exist $K_1$, $K_2>0$ such that for any $y_1$, $y_2$, $\bar{y}\in\partial\Omega$, we have
	\begin{equation}\label{eq:g_semiconcave}
		g(y_1)+g(y_2)-2g(\bar{y})\leqslant K_1|y_1-y_2|^2+K_2|y_1+y_2-2\bar{y}|;
	\end{equation}
	\item since $\langle \nabla G(x),y-x\rangle=\langle \nabla G(y),y-x\rangle+\langle\nabla G(x)-\nabla G(y),y-x\rangle\leqslant C|x-y|^2$ for some $C>0$, then condition \eqref{g2} is equivalent to $\langle \nabla G(x),y-x\rangle\leqslant \breve{C}|x-y|^2$ for all $x$, $y\in \partial\Omega$. 
\end{enumerate}
\end{Rem}

The main result of this section is stated as follows.

\begin{The}\label{propagation-global}
Let $\Omega\subset\R^n$ be a bounded domain with $C^2$ boundary,
let $L$ be a Tonelli Lagrangian satisfying  $L\geqslant\alpha >0$ and let $g$ satisfy ({\bf G1}),({\bf G2}).
If $x_0\in\CUT$, then there exists a generalized characteristic $\mathbf{x}:[0,+\infty)\to\Omega$ starting from $\mathbf{x}(0)=x_0$ such that $\mathbf{x}(s)\in\SING$ for all $s\in[0,+\infty)$.
\end{The}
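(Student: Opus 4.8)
The plan is to obtain the conclusion from Theorem \ref{propagation1} by excluding alternative (b). For this it is enough to find $\delta'>0$ such that $\SING\cap(\Gamma_{\delta'}\cap\Omega)=\varnothing$: indeed, in case (b) one has $\mathbf{x}(s_k)\in\SING$ with $d_{\partial\Omega}(\mathbf{x}(s_k))\to 0$, so for $k$ large $\mathbf{x}(s_k)\in\Gamma_{\delta'}\cap\Omega$, a contradiction, leaving only alternative (a). To make $\SING$ empty near $\Gamma$ I will show that $u$ is at the same time semiconcave and semiconvex on a one-sided neighborhood of $\Gamma$; then $D^+u(x)$ and $D^-u(x)$ are both nonempty there, so by Proposition \ref{basic_facts_of_superdifferential}(c),(d) the function $u$ is differentiable (indeed $C^{1,1}$) on that neighborhood, hence has no singular points in it.

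\textbf{A two-sided bound for the exit time.} First note that $L\geqslant\alpha>0$ gives $\frac{1}{t}A^\Omega_t(x,x)\geqslant\alpha$, hence $c_\Omega(L)\leqslant-\alpha<0$ and all the results of Section 3 apply. The common quantitative ingredient is the estimate $d_{\partial\Omega}(x)/C_2\leqslant T(x)\leqslant C_*\,d_{\partial\Omega}(x)$ for $x$ near $\Gamma$, with $C_*$ depending only on the data; the left inequality is Lemma \ref{upper_bound}. For the right one, let $y^*\in\partial\Omega$ realize $u(x)=g(y^*)+A^\Omega_{T(x)}(y^*,x)$ and let $\bar y$ be the nearest boundary point to $x$; comparing with the competitor that runs straight from $x$ to $\bar y$ and using $\Phi^\Omega_L(y^*,\bar y)\leqslant\Phi^\Omega_L(y^*,x)+\Phi^\Omega_L(x,\bar y)$, condition ({\bf G1}) with $\nu<1$, the Lipschitz bound on $\Phi^\Omega_L$ from Lemma \ref{poten_proper}, the short-time bound $A^\Omega_s(\bar y,x)\leqslant C\theta_2(\kappa(1))\,d_{\partial\Omega}(x)$ (Lemma \ref{eq:Lip_main}, with $s=C d_{\partial\Omega}(x)$), and $A^\Omega_{T(x)}(y^*,x)\geqslant\alpha T(x)$, one arrives at $(1-\nu)\alpha\,T(x)\leqslant C'\,d_{\partial\Omega}(x)$. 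In particular $T(x)\to 0$ as $x\to\Gamma$, so for $x$ close to $\Gamma$ the optimal exit falls in the short-time regime of Proposition \ref{C11_A_t}, where $A^\Omega_t(y,x)$ is $C^{1,1}_{\mathrm{loc}}$ and uniformly convex in each variable with constant of order $1/t\asymp 1/d_{\partial\Omega}(x)$.

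\textbf{Semiconcavity on $\overline\Omega$.} For $x$ with $d_{\partial\Omega}(x)\geqslant\rho_0$, Lemma \ref{semi_value} already gives \eqref{eq:local_semiconcaveity} with a constant depending only on the fixed $\rho_0$, so only $d_{\partial\Omega}(x)<\rho_0$ remains. There I start from $u(x)=g(y^*)+A^\Omega_{T(x)}(y^*,x)$ and, for $x\pm z$ with $[x-z,x+z]\subset\overline\Omega$, build competitors by perturbing simultaneously the boundary endpoint along $\Gamma$ and the calibrated arc: since $\partial\Omega$ is $C^2$ the metric projection onto $\Gamma$ is $C^1$ near $y^*$, so $x\pm z$ have boundary projections $y^\pm$ with $|y^++y^--2y^*|=O(|z|^2)$ up to a bounded-curvature term, and $g(y^+)+g(y^-)-2g(y^*)$ is then controlled by \eqref{eq:g_semiconcave}, i.e. by the $C^{1,1}$ extension $G$ of $g$. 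The cost term $A^\Omega_{t^+}(y^+,x+z)+A^\Omega_{t^-}(y^-,x-z)-2A^\Omega_{T(x)}(y^*,x)$ is controlled by Proposition \ref{C11_A_t}, valid because $t\asymp d_{\partial\Omega}(x)$ is small; the key point is that now the perturbed arc has length comparable to $d_{\partial\Omega}(x)$, so the factor $1/\tau$ that blew up in Lemma \ref{semi_value} is replaced by $1/T(x)$ times a gain of order $d_{\partial\Omega}(x)$ coming from the freedom to move the endpoint on $\Gamma$, and the resulting bound is uniform. This gives semiconcavity of $u$ on all of $\overline\Omega$.

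\textbf{Semiconvexity near $\Gamma$, and conclusion.} This is the main difficulty and the place where the $C^2$-regularity of $\partial\Omega$ and the $C^{1,1}$-regularity of $g$ are essential. For $x\in\Gamma_{\delta'}\cap\Omega$ I write $u(x)=\min_{y\in\partial\Omega}\{G(y)+A^\Omega_{t(x,y)}(y,x)\}$, where $t(x,y)$ realizes $\Phi^\Omega_L(y,x)$; by the previous steps this minimum is attained near the nearest boundary point $\bar y$ and at values $t\asymp d_{\partial\Omega}(x)$. By Proposition \ref{C11_A_t}(b),(c) the function $y\mapsto A^\Omega_{t(x,y)}(y,x)$ is $C^{1,1}$ and uniformly convex on a boundary patch with a lower Hessian bound of order $1/d_{\partial\Omega}(x)$; expressed in local $C^2$ graph coordinates for $\Gamma$ and added to the $C^{1,1}$ function $G$, the reduced objective is $C^{1,1}$ with Hessian still bounded below by a positive multiple of $1/d_{\partial\Omega}(x)$ once $\delta'$ is small enough to absorb the bounded curvature correction; hence the minimizer $y(x)$ is unique and Lipschitz in $x$, and $x\mapsto u(x)$ inherits a uniform lower bound on second differences, i.e. it is semiconvex on $\Gamma_{\delta'}\cap\Omega$. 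Combining with the preceding paragraph, $u\in C^{1,1}(\Gamma_{\delta'}\cap\Omega)$, so $\SING\cap(\Gamma_{\delta'}\cap\Omega)=\varnothing$; as observed above this rules out alternative (b) of Theorem \ref{propagation1}, and alternative (a) is exactly the assertion of the theorem. The hard part is precisely this last estimate: it needs the exit time to be short (so that $A^\Omega_t(\cdot,x)$ is strongly convex), the boundary to be $C^2$ (so the constrained minimization over $y$ stays non-degenerate after a bounded curvature correction), and $g$ to extend $C^{1,1}$; none of these hypotheses is dispensable.
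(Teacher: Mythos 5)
Your overall strategy coincides with the paper's: invoke the dichotomy of Theorem~\ref{propagation1}, prove semiconcavity of $u$ on $\overline\Omega$ (Proposition~\ref{global_semi_value}) and semiconvexity of $u$ near $\partial\Omega$ (Proposition~\ref{semiconvex}), deduce from Proposition~\ref{basic_facts_of_superdifferential} that $u$ is $C^{1,1}$ on a collar of $\Gamma$, hence $\SING$ is bounded away from $\partial\Omega$, which rules out alternative (b). The semiconcavity sketch, though compressed, is in the spirit of the paper's Case~1/2/3 analysis.

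The genuine gap is in the semiconvexity step. You write $u(x)=\min_{y\in\partial\Omega}\{G(y)+A^\Omega_{t(x,y)}(y,x)\}$ and then invoke Proposition~\ref{C11_A_t}(b),(c) to assert that $y\mapsto A^\Omega_{t(x,y)}(y,x)$ is $C^{1,1}$ and uniformly convex with Hessian of order $1/d_{\partial\Omega}(x)$. But Proposition~\ref{C11_A_t} concerns the fundamental solution at a \emph{fixed} $t$; the function you need to control is the Ma\~n\'e potential $\Phi^\Omega_L(\cdot,x)$, which is the infimum over $t$. Taking the envelope over $t$ degrades the convexity: by the envelope formula the Hessian of $\Phi^\Omega_L(\cdot,x)$ is $D^2_{yy}A^\Omega_t-\bigl(D^2_{ty}A^\Omega_t\bigr)^2/D^2_{tt}A^\Omega_t$, and in the direction of the minimizing extremal this Schur complement vanishes. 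Concretely, for $L=\tfrac12|v|^2-V$ with constant $V<0$ one has $\Phi^\Omega_L(y,x)=2\sqrt{-V}\,|y-x|$, whose $y$-Hessian is $\tfrac{2\sqrt{-V}}{|y-x|}\,P^\perp$ and has a zero eigenvalue along $y-x$: not uniformly convex. Recovering a lower Hessian bound after restricting to $\Gamma$ requires the extra geometric observation that the minimizing extremal meets $\Gamma$ transversally (so the degenerate direction is not tangent to $\Gamma$) and a delicate accounting of the $O(1)$ curvature correction against the $1/d_{\partial\Omega}(x)$ convexity; none of this appears in your argument, and a na\"ive second-order Schur-complement count in the parametric minimization $u(x)=\min_y F(x,y)$ with $D^2 F\sim 1/d$ gives a leading-order cancellation that must be resolved carefully. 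The paper circumvents the envelope problem entirely: Lemma~\ref{lemmasemiconv} uses Pontryagin's maximum principle, a transversality condition at $\Gamma$, and Gronwall estimates to prove Lipschitz dependence of calibrated arcs $\xi^\pm$, dual arcs $p^\pm$, and exit times $T^\pm$ on the endpoints, and Proposition~\ref{semiconvex} then bounds the second difference of $u$ by substituting an explicit interpolated competitor arc into the action, using the convexity of $L$ in $v$. Your route could conceivably be salvaged, but as written the uniform convexity claim is unsupported by the cited results and skips the hard part of the estimate.
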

This theorem is a direct consequence of Theorem \ref{propagation1}, Proposition  \ref{global_semi_value} and Proposition \ref{semiconvex}.

\subsection{Semiconcavity of $u$ up to the boundary}
In order to get the semiconcavity of $u$ on $\overline{\Omega}$, we need certain additional conditions. 

\begin{Pro}[Global semiconcavity]\label{global_semi_value}
Let $L$ be a Tonelli Lagrangian with $L\geqslant \alpha>0$, 
let $g$ satisfy ({\bf G1}) and ({\bf G2}) and 
let $\Omega$ be a bounded Lipschitz domain satisfying
 the exterior sphere condition: there exists $\rho>0$ such that
  $$
  	\forall x\in\partial\Omega, \ \exists x_0\in\R^n\setminus\Omega\ \text{such that}\ x\in\overline{B(x_0,\rho)}\subset\R^n\setminus\Omega.
  $$
	Then $u$ is semiconcave on $\overline{\Omega}$.
\end{Pro}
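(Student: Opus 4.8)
By the midpoint characterization of semiconcavity it suffices to exhibit a constant $C>0$ with
\begin{equation}\label{eq:plan_target}
	u(x+z)+u(x-z)-2u(x)\leqslant C|z|^{2}
\end{equation}
for all $x\in\overline{\Omega}$ and all $z\in\R^{n}$ with $x\pm z\in\overline{\Omega}$. The plan is first to dispose of two easy regimes. Since $u$ is Lipschitz on $\overline{\Omega}$ (Proposition~\ref{compatible_u}), \eqref{eq:plan_target} holds with $C=2\,\mathrm{Lip}(u)/\varepsilon_{0}$ whenever $|z|\geqslant\varepsilon_{0}$, so one fixes a small $\varepsilon_{0}>0$ and assumes $|z|<\varepsilon_{0}$; and whenever $d_{\partial\Omega}(x)\geqslant\rho_{1}$ for a fixed $\rho_{1}>0$ (with $\varepsilon_{0}<\rho_{1}/8$), \eqref{eq:plan_target} follows from the interior estimate of Lemma~\ref{semi_value} with a fixed constant. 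Everything is thus concentrated in the \emph{boundary layer} $d_{\partial\Omega}(x)<\rho_{1}$, $|z|<\varepsilon_{0}$, exactly where the constant produced by Lemma~\ref{semi_value} degenerates, and where the extra hypotheses ($L\geqslant\alpha$, ({\bf G1}), ({\bf G2}), exterior sphere) must be brought in.

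The second step is a localization of the representation formula near $\partial\Omega$, and this is the role of $L\geqslant\alpha>0$ and ({\bf G1}). From $A^{\Omega}_{t}(x,x)\geqslant\alpha t$ one has $c_{\Omega}(L)\leqslant-\alpha<0$, so ({\bf SH}4$'$) holds and Corollary~\ref{exit_time_prob} gives $y^{*}\in\partial\Omega$ with $u(x)=g(y^{*})+\Phi^{\Omega}_{L}(y^{*},x)=g(y^{*})+A^{\Omega}_{T(x)}(y^{*},x)$. Combining $u(x)\leqslant g(\bar y)+\Phi^{\Omega}_{L}(\bar y,x)$ for a nearest boundary point $\bar y$, the triangle inequality and the Lipschitz bound $|\Phi^{\Omega}_{L}|\leqslant C_{1}|\cdot-\cdot|$ of Lemma~\ref{poten_proper}, and ({\bf G1}) in the form $g(\bar y)-g(y^{*})\leqslant\nu\,\Phi^{\Omega}_{L}(y^{*},\bar y)$ with $\nu<1$, one gets
\begin{equation*}
	\Phi^{\Omega}_{L}(y^{*},x)=u(x)-g(y^{*})\leqslant\frac{(1+\nu)\,C_{1}}{1-\nu}\,d_{\partial\Omega}(x),
\end{equation*}
whence, using $A^{\Omega}_{T(x)}(y^{*},x)\geqslant\alpha\,T(x)$ and the speed bound of Lemma~\ref{upper_bound}, both $T(x)$ and $|y^{*}-x|$ are bounded by a fixed multiple of $d_{\partial\Omega}(x)$. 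Consequently, in the boundary layer the calibrated minimizer $\xi^{*}\colon[-T(x),0]\to\overline{\Omega}$ joining $y^{*}$ to $x$ is short and uniformly Lipschitz, and lies within a ball of radius $O(d_{\partial\Omega}(x))$ about $y^{*}$.

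The heart of the argument is then a competitor estimate for $u(x\pm z)$ obtained by letting the exit point slide along $\partial\Omega$, in the spirit of the classical proof that $d_{\partial\Omega}$ is semiconcave under an exterior sphere condition. Writing that condition at $y^{*}$ in the equivalent quadratic form $\langle\mu,w-y^{*}\rangle\leqslant\tfrac{1}{2\rho}|w-y^{*}|^{2}$ for all $w\in\overline{\Omega}$ (with $\mu$ the unit vector from $y^{*}$ to the centre of the exterior ball), one selects suitable boundary points $y^{\pm}\in\partial\Omega$ near $y^{*}$, chosen so as to absorb the components of $\pm z$ tangent to $\partial\Omega$; the quadratic form controls the curvature of $\partial\Omega$ to second order, and together with ({\bf G2}) and its consequence \eqref{eq:g_semiconcave} it yields $g(y^{+})+g(y^{-})-2g(y^{*})\leqslant C|z|^{2}$. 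For the Mañé–potential part one uses $u(x\pm z)\leqslant g(y^{\pm})+\Phi^{\Omega}_{L}(y^{\pm},x\pm z)$ and the fine regularity of the relative fundamental solution on small balls (Proposition~\ref{C11_A_t}), together with the short–trajectory bounds of the previous step: along the sliding family the large, $O(1/d_{\partial\Omega}(x))$, tangential second–order behaviour of $y\mapsto\Phi^{\Omega}_{L}(y,\cdot)$ is cancelled, leaving a genuinely quadratic remainder $\Phi^{\Omega}_{L}(y^{+},x+z)+\Phi^{\Omega}_{L}(y^{-},x-z)-2\Phi^{\Omega}_{L}(y^{*},x)\leqslant C|z|^{2}$, with $C$ depending only on $\rho$, the data of $L$, $\mathrm{Lip}(u)$, the $C^{1,1}$ modulus of $G$ and the constant $\nu$. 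Adding the two estimates gives \eqref{eq:plan_target} in the boundary layer, and patching with the interior estimate finishes the proof.

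I expect this last competitor estimate to be the main obstacle. Near the boundary the calibrated trajectory is short — its length is comparable to $d_{\partial\Omega}(x)$ — so one cannot use the device of Lemma~\ref{semi_value}, which spreads the endpoint displacement over a time interval of fixed length; a naive affine interpolation would cost $O(|z|^{2}/d_{\partial\Omega}(x))$ in kinetic energy. The remedy is to move the exit point together with $z$ rather than to deform $\xi^{*}$ transversally, so that the effective deformation is an almost rigid translation. Making this rigorous requires (i) ensuring the perturbed curves stay in $\overline{\Omega}$, for which the uniform exterior sphere condition is precisely the right hypothesis, and (ii) showing that the second–order contributions of the curvature of $\partial\Omega$ and of the boundary datum are $O(|z|^{2})$ although only one–sided (exterior, convex–type) control is available on $\partial\Omega$ and, through $G$, on $g$ — which is the role of ({\bf G1}) and ({\bf G2}). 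In other words, the delicate point is to reconcile the degenerate $O(1/d_{\partial\Omega}(x))$ second–order behaviour of each individual competitor $g(y)+\Phi^{\Omega}_{L}(y,\cdot)$ with the fact that the infimum over $y$, accompanied by an appropriate sliding of the minimizer, is semiconcave with a constant that does not degenerate at the boundary.
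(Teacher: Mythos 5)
Your proposal is a strategy sketch rather than a proof, and you yourself flag the central ``competitor estimate'' as the main obstacle --- which is precisely the part the paper's argument is devoted to, and precisely where your outline has no concrete step. The paper's key device is to take a minimizing curve $\xi\colon[-T(x),0]\to\overline\Omega$ from $\bar y\in\partial\Omega$ to $x$, form the \emph{translates} $\xi_{\pm h}=\xi\pm h$, and then track carefully which of $\xi$, $\xi_h$, $\xi_{-h}$ exits $\overline\Omega$ first. This yields a genuine trichotomy. In the case where $\xi$ exits first, the exterior-sphere condition is applied to the midpoint $\bar x=\xi(-t^*)=(x^++x^-)/2\in\partial\Omega$ to get $d_{\partial\Omega}(x^\pm)\leqslant 2K|h|^2$. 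In the case where a translate exits first, one must construct a \emph{reflected} competitor arc $\gamma_\tau$ and bound the reflection time $\tau^*$ using ({\bf G1}) together with $L\geqslant\alpha$ --- and this itself splits into two subcases according to whether $\gamma_\tau$ remains in $\Omega$. None of this branching, nor the reflection construction, is foreseen in your plan, and it is not a routine filling-in: the $O(|h|^2)$ bound fails without it.

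You also slightly misattribute the role of the exterior-sphere hypothesis: it does \emph{not} ``ensure the perturbed curves stay in $\overline\Omega$'' (the translates generically do leave $\overline\Omega$, which is why the case analysis exists). Rather, it gives semiconcavity of $d_{\partial\Omega}$, used to show that when a midpoint of two points of $\overline\Omega$ lies on $\partial\Omega$, the endpoints are within $O(|h|^2)$ of the boundary. Two further, minor, divergences from the paper: the paper's proof does not split into an interior region plus a boundary layer (it produces a uniform estimate on all of $\overline\Omega$, so Lemma~\ref{semi_value} is not invoked at all), and it does not use the $C^{1,1}$ regularity of Proposition~\ref{C11_A_t}; the Ma\~n\'e-potential contribution is bounded by direct comparison of Lagrangian actions along the explicit translated and reflected competitor curves.
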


\begin{Rem}
Since $L\geqslant\alpha>0$, then $c_\Omega(L)<0$ and $\Phi_L^\Omega(\cdot,\cdot)$ is a nonnegative Lipschitz function with a Lipschitz constant $C_1=\theta_2(1)C$, where $C>0$ is a constant depending only on $\Omega$.  
\end{Rem}

\begin{proof}[Proof of Proposition \ref{global_semi_value}]
	In order to give the semiconcavity estimate of $u$ on $\overline{\Omega}$, it suffices to show that there exists $C\geqslant 0$ such that 
$$
u(x+h)+u(x-h)-2u(x)\leqslant C|h|^2
$$
for all $x\in\overline{\Omega}$ and $h\in\R^n$ such that $[x-h,x+h]\subset\overline{\Omega}$. In the rest of the proof we use $c_i$ to denote certain positive constants independent of $x$ and $h$.
For any $x\in\overline{\Omega}$, there exist $\bar{y}\in\partial\Omega$ and a curve $\xi:[-T(x),0]\to \R^n$ such that
$$
u(x)=g(\bar{y})+\int^0_{-T(x)}L(\xi,\dot{\xi})\ ds=g(\bar{y})+\Phi^{\Omega}_L(\bar{y},x).
$$
It means that $\xi$ is a minimizer of $A^{\Omega}_{T(x)}(\bar{y},x)=\Phi^{\Omega}_L(\bar{y},x)$. Define $\xi_h=\xi+h$ and $\xi_{-h}=\xi-h$. Then $\xi_h(0)=x+h$, $\xi_{-h}(0)=x-h$ and $\xi(0)=x$. 
We define 
$$
T_{\xi_{\pm h}}(x\pm h)=\inf\{s: \xi_{\pm h}(-s)\in\partial \Omega,\ s\in[0,T(x)]\}.
$$
If $\xi_{\pm h}(s)\in \Omega$ for all $s\in[-T(x),0]$, then $T_{\xi_{\pm h}}(x\pm h)=+\infty$.
It is now convenient to distinguish three cases depending on which of $\xi$, $\xi_{\pm h}$ reaches $\partial \Omega$ first. 
\medskip

\noindent {\bf{Case 1}}: $T(x)=T_{\xi}(x)\leqslant\min\{T_{\xi_h}(x+h),T_{\xi_{-h}}(x-h)\}$ (see Figure 5.1).

\medskip
\noindent
Let $t^*=T_{\xi}(x)$ and set $x^+=\xi_h(-t^*)$, $x^-=\xi_{-h}(-t^*)$, and $\bar{x}=\bar{y}=\xi(-t^*)\in\partial \Omega$. Then, for any $y^{\pm}\in\partial\Omega$, we get
\begin{equation}\label{eq:semiconcavity1}
\begin{split}
	&u(x+h)+u(x-h)-2u(x)\\
	\leqslant& g(y^+)+\Phi^{\Omega}_L(y^+,x+h)+g(y^-)+\Phi^{\Omega}_L(y^-,x-h)-2g(\bar{x})-2A^{\Omega}_{T(x)}(\bar{x},x)\\
	\leqslant&\{g(y^+)+g(y^-)-2g(\bar{x})\}+\{\Phi^{\Omega}_L(y^+,x^+)+\Phi^{\Omega}_L(y^-,x^-)\}+c_1|h|^2.
\end{split}
\end{equation}
We also recall that, by the exterior sphere condition, there exists $K>0$ such that
\begin{equation}\label{eq:dist_semiconcave}
	\lambda d_{\partial\Omega}(x')+(1-\lambda)d_{\partial\Omega}(x'')-d_{\partial\Omega}(\lambda x'+(1-\lambda)x'')\leqslant\lambda(1-\lambda)K|x'-x''|^2
\end{equation}
for all $x',x''\in\overline{\Omega}$ and $\lambda\in[0,1]$. Since $\bar{x}\in\partial\Omega$ and $\bar{x}=(x^++x^-)/2$, then

\begin{align*}
	\frac 12 d_{\partial\Omega}(x^+)+\frac 12d_{\partial\Omega}(x^-)\leqslant K|h|^2,
\end{align*} 
and thus
$$
d_{\partial\Omega}(x^{\pm})\leqslant 2K|h|^2.
$$
Now, choose $y^{\pm}\in\partial\Omega$ in \eqref{eq:semiconcavity1} such that $|x^{\pm}-y^{\pm}|=d_{\partial\Omega}(x^\pm)\leqslant 2K|h|^2$. Then
\begin{equation}\label{eq:semiconcavity2}
	\Phi^{\Omega}_L(y^+,x^+)+\Phi^{\Omega}_L(y^-,x^-)\leqslant c_2|h|^2,
\end{equation}
where $c_2=4C_1K$ and $C_1$ is a Lipschitz constant of $\Phi^{\Omega}_L(\cdot,\cdot)$.
Moreover, the inequality
\begin{equation}\label{eq:semiconcavity3}
	g(y^+)+g(y^-)-2g(\bar{x})\leqslant c_3|h|^2
\end{equation}
follows from the estimates
\begin{align*}
	|y^+-y^-|\leqslant|y^+-x^+|+|x^+-x^-|+|x^--y^-|\leqslant& c_4|h|,\\
	|y^++y^--2\bar{x}|\leqslant|y^+-x^+|+|x^++x^--2\bar{x}|+|y^--x^-|\leqslant& c_5|h|^2,
\end{align*}
and \eqref{eq:g_semiconcave}. 
The combination of \eqref{eq:semiconcavity2}, \eqref{eq:semiconcavity3} and \eqref{eq:semiconcavity1} leads to our estimate.

\medskip

\begin{figure}
\begin{minipage}[t]{0.5\linewidth}
\begin{center}
	\begin{tikzpicture}
	\clip (-1.8,-.5) rectangle (2.5,2.5);
	\path [name path=arc1, draw=none] (-1.8,2.5) to[out=-45,in=-115] (1.8,2.5);
	\path [name path=arc0, draw=none] (0,0) to[out=120,in=60] (0,2.5);
	\path [name path=arc-, draw=none] (-1,0) to[out=120,in=60] (-1,2.5);
	\path [name path=arc+, draw=none] (1,0) to[out=120,in=60] (1,2.5);
	\draw [thick] (-1.8,2.5) to[out=-45,in=-115] (1.8,2.5);
	\draw [thick] (0,0) to[out=120,in=60] (0,2.5);
	\draw [thick] (-1,0) to[out=120,in=60] (-1,2.5);
	\draw [thick] (1,0) to[out=120,in=60] (1,2.5);
	
	\fill[gray!20] (-1.8,2.5) to[out=-45,in=-115] (1.8,2.5) -- (-1.8,2.5);
	\draw (1.3,2.1) node[above] {$\scriptscriptstyle \R^n\setminus\Omega$};
	\fill [name intersections={of = arc0 and arc1, by={b}}]
	(b) circle (1pt);
	\fill [name intersections={of = arc- and arc1, by={a}}]
	(a) circle (1pt);
	\fill [name intersections={of = arc+ and arc1, by={c}}]
	(c) circle (1pt);
	
	\fill [red] (0,0) circle (1pt);
	\draw (0,0) node[below] {$\scriptscriptstyle x$};
	\fill [red] (-1,0) circle (1pt);
	\draw (-1,0) node[below] {$\scriptscriptstyle x-h$};
	\fill [red] (1,0) circle (1pt);
	\draw (1,0) node[below] {$\scriptscriptstyle x+h$};
	
	\path [name path=a1, draw=none] (-2,1.67) -- (2,1.67);
	\fill [red][name intersections={of = a1 and arc-, by={A}}]
	(A) circle (1pt);
	\fill [red][name intersections={of = a1 and arc+, by={B}}]
	(B) circle (1pt);
	\fill [red][name intersections={of = a1 and arc0, by={C}}]
	(C) circle (1pt);
	\draw (A) node[left] {$\scriptscriptstyle x^-$};
	\draw (B) node[right] {$\scriptscriptstyle x^+$};
	\draw (a) node[above] {$\scriptscriptstyle y^-$};
	\draw (c) node[above] {$\scriptscriptstyle y^+$};
	\draw (b) node[above] {$\scriptscriptstyle \bar{x}=\bar{y}$};

	\draw [red] (-1,0) -- (0,0) -- (1,0);
	\draw [red] (b) --++ (1,0);
	\draw [red] (b) --++ (-1,0);
\end{tikzpicture}
\end{center}
\caption{fig1}
\label{fig:side:a}
\end{minipage}%
\begin{minipage}[t]{0.5\linewidth}
\begin{center}
	\begin{tikzpicture}
	\clip (-2.0,-1.7) rectangle (2.5,2.5);
	\path [name path=arc1, draw=none] (-2.5,-.8) to[out=25,in=-115] (.5,2.5);
	\path [name path=arc0, draw=none] (-1.5,-1.2) to[out=120,in=60] (-2,2.5);
	\path [name path=arc-, draw=none] (-.5,-1.2) to[out=120,in=60] (-1,2.5);
	\path [name path=arc+, draw=none] (.5,-1.2) to[out=120,in=60] (0,2.5);
	
	\draw [thick] (-2.5,-.8) to[out=25,in=-115] (.5,2.5);
	\draw [thick] (-1.5,-1.2) to[out=120,in=60] (-2,2.5);
	\draw [thick] (-0.5,-1.2) to[out=120,in=60] (-1,2.5);
	\draw [thick] (.5,-1.2) to[out=120,in=60] (0,2.5);
	
	\fill[gray!20] (-2.5,-.8) to[out=25,in=-115] (.5,2.5) -- (-2.5,2.5) -- (-2.5,-.8);
	\fill [name intersections={of = arc0 and arc1, by={b}}]
	(b) circle (1pt);
	\fill [name intersections={of = arc- and arc1, by={a}}]
	(a) circle (1pt);
	\fill [name intersections={of = arc+ and arc1, by={c}}]
	(c) circle (1pt);
	
	\fill [red]  (-0.5,-1.2) circle (1pt);
	\draw  (-0.5,-1.2) node[below] {$\scriptscriptstyle x$};
	\fill [red] (-1.5,-1.2) circle (1pt);
	\draw (-1.5,-1.2) node[below] {$\scriptscriptstyle x-h$};
	\fill [red] (.5,-1.2) circle (1pt);
	\draw (1,-1.2) node[below] {$\scriptscriptstyle x+h$};
	
	\path [name path=a1, draw=none] (-2,-0.38) -- (2,-0.38);
	\fill [red][name intersections={of = a1 and arc-, by={A}}]
	(A) circle (1pt);
	\fill [red][name intersections={of = a1 and arc0, by={B}}]
	(B) circle (1pt);
	\fill [red][name intersections={of = a1 and arc+, by={C}}]
	(C) circle (1pt);
	
	\fill [red] (.22,1.18) circle (1pt);
	\draw (.22,1.18) node[right] {$\scriptscriptstyle x^*$};
	
	\draw (-.75,-.5) node[right] {$\scriptscriptstyle \bar{x}$};
	\draw (C) node[right] {$\scriptscriptstyle x^+$};
	\draw (-1.75,-.5) node[right] {$\scriptscriptstyle x^-$};
	\draw (-1.65,-.4) node[above] {$\scriptscriptstyle y^-$};
	\draw (a) node[left] {$\scriptscriptstyle \bar{y}$};
	\draw (c) node[left] {$\scriptscriptstyle y^+$};
	\draw (-1.3,1.1) node[above] {$\scriptscriptstyle \R^n\setminus\Omega$};
	
	\draw [red] (B) -- (C);
	\draw [red] (-1.5,-1.2) -- (0.5,-1.2);
	\end{tikzpicture}
\end{center}
\caption{fig2}
\label{fig:side:b}
\end{minipage}
\end{figure}

\noindent {\bf{Case 2}}: $T_{\xi_{-h}}(x-h)<\min\{T_{\xi_h}(x+h),T_{\xi}(x)\}$ (or $T_{\xi_{h}}(x+h)<\min\{T_{\xi_{-h}}(x-h),T_{\xi}(x)\}$) (see Figure 5.2).

Without any loss of generality, we assume $T_{\xi_{-h}}(x-h)<\min\{T_{\xi_h}(x+h),T_{\xi}(x)\}$.
Let $t^*=T_{\xi_{-h}}(x-h)$,  $x^{\pm}=\xi_{\pm h}(-t^*)$ and $\bar{x}=\xi(-t^*)$. Similar to \eqref{eq:semiconcavity1}, we have that, for any $y^+\in\partial\Omega$,
\begin{equation}\label{eq:semiconcavity4}
\begin{split}
	&u(x+h)+u(x-h)-2u(x)\\
	\leqslant&\{g(x^-)+g(y^+)-2g(\bar{y})\}+\{\Phi^{\Omega}_L(y^+,x^+)-2\Phi^{\Omega}_L(\bar{y},\bar{x})\}+c_6|h|^2.
\end{split}
\end{equation}

Let $\eta(s)=\xi(s-t^*)$. Then $\eta(0)=\bar{x}$. For any $\tau\ll1$, we define a curve by
$$
\gamma_{\tau}(s)=\begin{cases}
	\eta(-\tau)-\eta(0)+\eta(s+\tau)+h, &s\in[-2\tau,-\tau],\\
	\eta(s)+h,& s\in[-\tau,0].
\end{cases}
$$
It is clear that for sufficiently small $\tau>0$, the arc $\gamma_{\tau}$ is contained in $\Omega$. We set
$$
\tau_0=\sup\{\tau>0: \text{the arc $\gamma_{\tau}$ is contained in $\Omega$} \}.
$$

Let
$$
\tau^*=T_{\eta}(\bar{x}).
$$
Then, for any projection of $\bar{x}$ to $\partial\Omega$, denoted by $z$, we have
\begin{align*}
g(\bar{y})+\Phi^\Omega_L(\bar{y},\bar{x})\leqslant g(z)+\Phi^\Omega_L(z,\bar{x})\leqslant g(z)+C_1d_{\partial \Omega}(\bar{x})\leqslant g(z)+C_1h,
\end{align*}
which yields that

\begin{align*}
\Phi^\Omega_L(\bar{y},\bar{x})\leqslant g(z)-g(\bar{y})+C_1h\leqslant \nu \Phi^\Omega_L(\bar{y},z)+C_1h\leqslant \nu \Phi^\Omega_L(\bar{y},\bar{x})+(1+\nu)C_1h.
\end{align*}
Thus, we deduce that
$$
\alpha \tau^*\leqslant\int_{-T(x)}^{-T(x)+\tau^*}L(\xi,\dot{\xi})\ ds=\Phi^\Omega_L(\bar{y},\bar{x})\leqslant \frac{1+\nu}{1-\nu}\cdot C_1h.
$$
Hence, we get 
$$
\tau^*\leqslant \frac1\alpha\cdot\frac{1+\nu}{1-\nu}\cdot C_1h.
$$

We have two cases which require a separate analysis by comparing $\tau_0$ and $\tau^*$.

\noindent {\bf{Case 2-1}}: If $\tau^*\leqslant\tau_0$, then $\gamma_{\tau^*}(s)\in\Omega$ for all $s\in(-2\tau^*,0]$.
In this case, we have that 
\begin{align*}	
		\gamma_{\tau^*}(0)&=x^+,\qquad \gamma_{\tau^*}(-\tau^*)=\eta(-\tau^*)+h=\bar{y}+h,\\
	x^*:=\gamma_{\tau^*}(-2\tau^*)&=2\eta(-\tau^*)-\eta(0)+h=2\bar{y}-(\bar{x}-h)=2\bar{y}-x^-.
\end{align*}
Thus we conclude that $\bar{y}=(x^{*}+x^-)/2\in\partial\Omega$, and $d_{\partial\Omega}(x^{*})\leqslant 2K|h|^2$ by \eqref{eq:dist_semiconcave}. It follows
\begin{align*}
	\Phi^{\Omega}_L(y^+,x^+)-2\Phi^{\Omega}_L(\bar{y},\bar{x})&\leqslant\Phi^{\Omega}_L(y^+,x^{*})+\Phi^{\Omega}_L(x^{*},x^+)-2\Phi^{\Omega}_L(\bar{y},\bar{x})\\
	&\leqslant\Phi^{\Omega}_L(y^+,x^{*})+\int^0_{-2\tau^{*}}L(\gamma_{\tau^*},\dot{\gamma}_{\tau^*})\ ds-2\int^0_{-\tau^{*}}L(\eta,\dot{\eta})\ ds\\
	&\leqslant\Phi^{\Omega}_L(y^+,x^{*})+c_7|h|^2.
\end{align*}
Taking $y^+$ to be any projection of $x^{*}$ to $\partial\Omega$, then
\begin{equation}\label{eq:semiconcavity5}
	\Phi^{\Omega}_L(y^+,x^+)-2\Phi^{\Omega}_L(\bar{y},\bar{x})\leqslant c_8|h|^2
\end{equation}
and 
\begin{equation}\label{eq:semiconcavity6}
	g(x^-)+g(y^+)-2g(\bar{y})\leqslant c_9|h|^2
\end{equation}
since $|x^--y^+|\leqslant|x^--x^{*}|+|x^{*}-y^+|\leqslant c_{10}|h|$ and $|x^-+y^+-2\bar{y}|\leqslant|x^-+x^{*}-2\bar{y}|+|x^{*}-y^+|\leqslant c_{11}|h|^2$. 
The desired estimate follows from the combination of \eqref{eq:semiconcavity4}, \eqref{eq:semiconcavity5} and \eqref{eq:semiconcavity6}.

\medskip

\noindent {\bf{Case 2-2}}: There exists $\tau\leqslant\tau^*$ such that $\gamma_{\tau}(-2\tau)\in\partial\Omega$.

\medskip

Let $x^{**}=\eta(-\tau)$ and $y^{**}=\gamma_{\tau}(-2\tau)\in\partial\Omega$. Then $x^{**}=(x^-+y^{**})/2$ and we get
\begin{equation*}\label{eq:semiconcavity7}
\begin{split}
	u(x+h)+u(x-h)-2u(x)
	&\leqslant g(x^-)+g(y^{**})-2g(\bar{y})-2\Phi^{\Omega}_L(\bar{y},x^{**})+c_{12}|h|^2\\
	&\leqslant g(x^-)+g(y^{**})-2g(\bar{y})+c_{12}|h|^2.
\end{split}
\end{equation*}
 Note that 
\begin{align*}
	g(x^-)+g(y^{**})-2g(\bar{y})&=g(x^-)+g(y^{**})-2G(x^{**})+2G(x^{**})-2g(\bar{y})\\
	&\leqslant c_{13}|h|^2+2\langle\nabla G(\bar{y}),x^{**}-\bar{y}\rangle+c_{14}|x^{**}-\bar{y}|^2\\
	&\leqslant c_{13}|h|^2+2\langle\nabla G(\bar{y}),\frac{x^--\bar{y}}{2}+\frac{y^{**}-\bar{y}}{2}\rangle+c_{14}|x^{**}-\bar{y}|^2\\
	&\leqslant c_{13}|h|^2+c_{15}|x^--\bar{y}|^2+c_{16}|y^{**}-\bar{y}|^2+c_{14}|x^{**}-\bar{y}|^2.
\end{align*}
Then from $|x^--\bar{y}|\leqslant|x^--\bar{x}|+|\bar{x}-\bar{y}|\leqslant c_{17}|h|$ and $|y^{**}-\bar{y}|\leqslant|\bar{y}-\bar{x}|+|\bar{x}-x^+|+|x^+-y^{**}|\leqslant c_{18}|h|$, we get
$$
g(x^-)+g(y^{**})-2g(\bar{y})\leqslant c_{19}|h|^2
$$
Thus, $u(x+h)+u(x-h)-2u(x)\leqslant c_{20}|h|^2$.

\medskip
\noindent {\bf{Case 3}}: $T_{\xi_{-h}}(x-h)=T_{\xi_h}(x+h)<T_{\xi}(x)$.
\medskip

Let $t^*=T_{\xi_{\pm h}}(x\pm h)$,  $x^{\pm}=\xi_{\pm h}(-t^*)\in\partial \Omega$ and $\bar{x}=\xi(-t^*)$. Similar to \eqref{eq:semiconcavity4}, we have that, for any $y^+\in\partial\Omega$,
\begin{align*}
	u(x+h)+u(x-h)-2u(x)
	\leqslant&\,g(x^-)+g(x^+)-2g(\bar{y})-2\Phi^{\Omega}_L(\bar{y},\bar{x})+c_{21}|h|^2.
\end{align*}
By similar arguments used in Case 2-2, we have $u(x+h)+u(x-h)-2u(x)\leqslant c_{22}|h|^2$.

The proof is complete.
\end{proof}

\subsection{Semiconvexity of $u$ near the boundary}


\begin{Lem}\label{lemmasemiconv}
Let $\Omega\subset\R^n$ be a bounded domain with $C^{1,1}$ boundary,
let $L$ be a Tonelli Lagrangian satisfying  $L\geqslant\alpha >0$ and let $g$ satisfy ({\bf G1}).
Then for any $\bar x\in\partial \Omega$ there exist $\eta$, $C>0$ such that: for any $x^+,x^-\in B(\bar x,\eta)\cap\overline{\Omega}$, $y^+,y^-\in\partial\Omega$, $T^+\geq T^->0$  and arcs $\xi^+\in\Gamma^{-T^+,0}_{y^+,x^+}(\overline{\Omega})$, $\xi^-\in\Gamma^{-T^-,0}_{y^-,x^-}(\overline{\Omega})$ satisfying
	$$
	u(x^\pm )=g(y^\pm)+\int^0_{-T^\pm}L(\xi^\pm(s),\dot{\xi^\pm}(s))\ ds,
	$$
we have that $\xi^\pm$ are of class $C^1$, $T^\pm\leqslant1$ and
\begin{equation}\label{estimlemma}
|\xi^+(s)-\xi^-(s)|+|\dot\xi^+(s)-\dot\xi^-(s)|\leqslant C|x^+-x^-|, \quad\forall\,s\in[-T^-,0],
\end{equation}
\begin{equation}\label{estimlemmaT}
|T^+-T^-|\leqslant C|x^+-x^-|.
\end{equation}
\end{Lem}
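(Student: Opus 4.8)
\emph{Proof proposal.} The plan has four stages: (i) a priori bounds that force $T^\pm$ to be small and comparable to $d_{\partial\Omega}(x^\pm)$; (ii) a structural description of the minimizing arcs near the boundary; (iii) Lipschitz dependence of the arc and of its initial momentum on the endpoint $x$; (iv) the exit-time estimate.

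\textbf{Step 1 (a priori bounds).} Let $z$ be the nearest-point projection of $x^\pm$ onto $\partial\Omega$. Comparing $u(x^\pm)=g(y^\pm)+\Phi^\Omega_L(y^\pm,x^\pm)$ with the competitor through $z$ and using ({\bf G1}), the triangle inequality and the Lipschitz bound $\Phi^\Omega_L(\cdot,\cdot)\le C_1|\cdot-\cdot|$ (Lemma~\ref{poten_proper}; note $L\ge\alpha$ gives $c_\Omega(L)<0$), one obtains $(1-\nu)\,\Phi^\Omega_L(y^\pm,x^\pm)\le(1+\nu)C_1\,d_{\partial\Omega}(x^\pm)$. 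Since $L\ge\alpha$ forces $\Phi^\Omega_L(y^\pm,x^\pm)=\int_{-T^\pm}^0 L\ge\alpha T^\pm$, this yields
\[
\tfrac{1}{c_0}\,d_{\partial\Omega}(x^\pm)\ \le\ T^\pm\ \le\ c_0\,d_{\partial\Omega}(x^\pm)\ \le\ c_0\,|x^\pm-\bar x|,
\]
the lower bound coming from the superlinearity of $L$ exactly as in Lemma~\ref{upper_bound} (the arc covers distance $\ge d_{\partial\Omega}(x^\pm)$ in time $T^\pm$); choosing $\eta$ small makes $T^\pm\le1$. Lemmas~\ref{eq:Lip_main} and \ref{upper_bound} also give a uniform speed bound $|\dot\xi^\pm|\le M$ and $|y^\pm-x^\pm|\le MT^\pm$, so all the points in play lie in a small fixed neighbourhood of $\bar x$.

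\textbf{Step 2 (structure of the arcs).} I claim $\xi^\pm$ meets $\partial\Omega$ only at $y^\pm$. If $\xi^\pm(-t_0)\in\partial\Omega$ with $t_0\in(0,T^\pm)$, then $\xi^\pm$ is $u$-calibrated on $[-T^\pm,-t_0]$ (Proposition~\ref{pro_calibrated}(a)) and $u=g$ on $\partial\Omega$, so $g(\xi^\pm(-t_0))-g(y^\pm)=\int_{-T^\pm}^{-t_0}L>0$; but ({\bf G1}) gives $g(\xi^\pm(-t_0))-g(y^\pm)\le\nu\,\Phi^\Omega_L(y^\pm,\xi^\pm(-t_0))\le\nu\int_{-T^\pm}^{-t_0}L$, contradicting $\nu<1$. (The case $t_0=0$ shows $x^\pm\in\Omega$.) Hence $\xi^\pm((-T^\pm,0])\subset\Omega$, so by Tonelli's theorem (Proposition~\ref{Tonelli_straint}) the pair $(\xi^\pm,p^\pm)$, $p^\pm=\frac{\partial L}{\partial v}(\xi^\pm,\dot\xi^\pm)$, solves \eqref{eq:Ham_ODE} on $(-T^\pm,0)$ with $H(\xi^\pm,p^\pm)\equiv0$; compactness of $\{H=0\}\cap(\overline\Omega\times\R^n)$ (by ({\bf H2})) bounds $p^\pm,\dot\xi^\pm$ and extends them continuously to $[-T^\pm,0]$, so $\xi^\pm\in C^1([-T^\pm,0])$ and the flow extends it past $-T^\pm$. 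Finally, since $T^\pm\ge c_0^{-1}d_{\partial\Omega}(x^\pm)$ while $|\ddot\xi^\pm|$ is uniformly bounded, the arc cannot reach $x^\pm$ (at boundary-distance $\asymp d_{\partial\Omega}(x^\pm)$) in time $\asymp d_{\partial\Omega}(x^\pm)$ unless the inward normal component of $\dot\xi^\pm(-T^\pm)$ is bounded below by a fixed $c_1>0$: the calibrated arcs cross the ($C^{1,1}$) hypersurface $\partial\Omega$ transversally, with uniform angle.

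\textbf{Step 3 (Lipschitz dependence and exit times).} Extend both arcs by the Hamiltonian flow to the common interval $[-T_0,0]$, $T_0=\max\{T^+,T^-\}=T^+$, keeping the names. They solve the same ODE, so Gronwall gives $|\xi^+(s)-\xi^-(s)|+|\dot\xi^+(s)-\dot\xi^-(s)|\le C\bigl(|x^+-x^-|+|p^+(0)-p^-(0)|\bigr)$ on $[-T_0,0]$, so everything reduces to the momentum estimate $|p^+(0)-p^-(0)|\le C|x^+-x^-|$ with $C$ uniform near $\bar x$. For this I would use that $y^\pm$ is a first-order minimiser of $z\mapsto g(z)+\Phi^\Omega_L(z,x^\pm)$ along the $C^{1,1}$ hypersurface $\partial\Omega$: using the regularity of $\partial\Omega$ and of $g$, the resulting transversality relation pins the tangential component of $p^\pm(-T^\pm)$ to a Lipschitz field determined by the derivative of $g$ along $\partial\Omega$, while $H(y^\pm,p^\pm(-T^\pm))=0$ pins its normal component to the ``inward'' root of this equation — Lipschitz in $y^\pm$, because $L\ge\alpha$ keeps $\{H<0\}$ nonempty and ({\bf G1}) keeps the tangential momentum subcritical, so the two roots stay non-degenerate. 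Thus $y^\pm\mapsto p^\pm(-T^\pm)$ is Lipschitz. Conversely, by Step 2 the backward flow from $(x,p(0))$ crosses $\partial\Omega$ transversally, so $(x,p(0))\mapsto(y,p(-T),T)$ is a bi-Lipschitz local homeomorphism (implicit function theorem); intersecting its image with the Lipschitz ``slice'' cut out by the transversality and by $H=0$ makes $x\mapsto p(0)$ a single-valued Lipschitz map on $B(\bar x,\eta)\cap\Omega$ (in particular $D^*u$ is single-valued there), giving $|p^+(0)-p^-(0)|\le C|x^+-x^-|$ and hence \eqref{estimlemma} on $[-T^-,0]\subset[-T_0,0]$. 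For \eqref{estimlemmaT}: $T^\pm$ is the first $s>0$ with $\xi^\pm(-s)\in\partial\Omega$, the crossing is transversal with uniform angle (Step 2), and the two extended arcs and their velocities are $C|x^+-x^-|$-close, so the standard comparison for transversal crossings gives $|T^+-T^-|\le C|x^+-x^-|$.

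\textbf{Main obstacle.} The real work is the momentum estimate of Step 3. Reachable gradients of a merely semiconcave function need not depend Lipschitz-continuously on the base point, so $|p^+(0)-p^-(0)|\le C|x^+-x^-|$ is false in the interior and genuinely uses the boundary: the regularity of $\partial\Omega$ and of the data, the transversal crossing by calibrated arcs, and the good inversion of the Hamiltonian flow in the boundary layer. Writing the transversality condition at the constrained endpoint $y^\pm$ precisely with only the available regularity, and keeping all constants uniform as $\bar x$ ranges over $\partial\Omega$ (i.e.\ localising near a fixed $\bar x$), is the technical core of the lemma.
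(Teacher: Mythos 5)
Your Steps 1 and 2 are essentially aligned with the paper's preliminaries: the a priori bounds on $T^\pm$ via ({\bf G1}), the uniform speed bounds, the transversality condition coming from Pontryagin's maximum principle, and the bound on the Lagrange multipliers $\mu^\pm$ all appear in the paper's argument. But Step 3 is where your route diverges, and this is where there is a genuine gap that you yourself flag but do not close.

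You reduce everything to the estimate $|p^+(0)-p^-(0)|\leqslant C|x^+-x^-|$ and propose to get it by showing that $x\mapsto p(0)$ is a single-valued Lipschitz map near $\partial\Omega$, via inverting the Hamiltonian flow off a Lipschitz ``slice'' in $T^*\partial\Omega$. This would in particular say that $D^*u$ is a singleton near the boundary, i.e.\ that $u$ is $C^{1,1}$ there---which is what the lemma together with Proposition~\ref{semiconvex} eventually establishes, so the argument puts the cart before the horse. Concretely, to make the implicit-function step work you would need to rule out focal points of $\partial\Omega$ within the short exit time, and this is the genuinely hard content: with only a $C^{1,1}$ boundary and a $C^{1,1}$ boundary datum $G$, the slice is merely Lipschitz, the classical (or even Clarke's) inverse function theorem does not directly apply, and nothing in your sketch excludes folds of the flowed-out front before time $T^\pm$. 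Asserting the bi-Lipschitz property is thus close to a restatement of the lemma rather than a proof of it.

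The paper avoids this entirely by never needing a two-sided bound on $p^+(0)-p^-(0)$ as an input. Instead it uses only the one-sided, monotonicity-type estimate $\langle x^+-x^-,\,p^+(0)-p^-(0)\rangle\leqslant C|x^+-x^-|^2$, which follows at once from $p^\pm(0)\in D^+u(x^\pm)$ and the global semiconcavity of $u$ established in Proposition~\ref{global_semi_value} (your proposal never invokes this). This one-sided bound is fed into a differential inequality for $\frac{d}{ds}\langle\xi^+-\xi^-,p^+-p^-\rangle$: the strict convexity of $H$ in $p$ gives a coercive term $c\,|p^+-p^-|^2$ on the right, the boundary contribution at $s=-T^-$ is handled by the transversality and $\mu$-estimates (your Step 2), and integrating yields an $L^2$ bound on $p^+-p^-$. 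A Gronwall argument on $|\xi^+-\xi^-|^2$ then closes the loop, and only at the very end does one recover the pointwise bound on $p^+-p^-$ and the estimate on $T^+-T^-$. In short, the two-sided Lipschitz estimate on $p(0)$ is an output of the lemma, not a usable input, and your proposed shortcut reintroduces as a hypothesis what the lemma is meant to prove.
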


\begin{proof}
Fix $\bar x\in\partial\Omega$ and set $M=\max_{x\in \overline{B(\bar x,1)},|v|=1}L(x,v)$. 
In the rest of the proof, we use $c_i$ to denote certain positive constants, which depend only on $L$, $\Omega$ and $\bar{x}$.
Let $x^\pm, y^\pm, \xi^\pm, T^\pm$ be as in the statement, for $\eta\leqslant\frac{\alpha}{C'}\cdot\frac{1+\nu}{1-\nu}$ sufficiently small, where $C'=\max\{C_1,M\}$ and $C_1$ is the Lipschitz constant of $\Phi_L^\Omega$, that will be defined later in the proof. Consider $z^\pm\in\partial\Omega$ such that $|x^\pm-z^\pm|=d_{\partial\Omega}(x^\pm)$. Note that
\begin{equation}\label{etaT}
\begin{split}
g(y^\pm)+\Phi_L^\Omega(y^\pm,x^\pm)&= g(y^\pm)+\int^0_{-T^\pm}L(\xi^\pm(s),\dot{\xi^\pm}(s))\ ds\\
&\leqslant g(z^\pm)+\int^0_{-|x^\pm-z^\pm|}L\left(x^\pm+s\frac{x^\pm-z^\pm}{|x^\pm-z^\pm|},\frac{x^\pm-z^\pm}{|x^\pm-z^\pm|}\right)\ ds\\
&\leqslant g(z^\pm)+M|x^\pm-z^\pm|\leqslant g(z^\pm)+M\eta,
\end{split}
\end{equation}
which implies that
$$
\alpha T^\pm\leqslant\Phi_L^\Omega(y^\pm,x^\pm)\leqslant\frac{1+\nu}{1-\nu}\cdot C' \eta. 
$$
In particular, we can deduce that $T^\pm\leq1$. Further, recall that by Lemma \ref{upper_bound},
\begin{equation}\label{esssupdotxi}
\operatorname*{ess\ sup}_{s\in[-T^\pm,0]}|\dot{\xi}^\pm(s)|\leqslant C
\end{equation}
for some $C>0$. Therefore, $\xi^\pm([-T^\pm,0])\subset B(\bar x, 1+C)$.

Under our assumptions imposed on $L$ and $\partial \Omega$, if $\nu^\pm$ are the external unit normals to $\Omega$ at $y^\pm$, there exist (unique) $\mu^\pm>0$ satisfying
\begin{equation}\label{H=0}
H(y^\pm,-\mu^\pm\nu^\pm)=0.
\end{equation}
The Pontryagin's maximum principle ensures the existence of two arcs $p^\pm:[-T^\pm,0]\to\R^n$ satisfying
\begin{equation}\label{pmpham}
\left\{\begin{array}{ll}\dot\xi^\pm(s)=H_p(\xi^\pm(s),p^\pm(s)),\\
\dot p^\pm(s)=-H_x(\xi^\pm(s),p^\pm(s)),\end{array}\right.
\end{equation}
\begin{equation}\label{maxpr}
\langle p^\pm(s),\dot\xi^\pm(s)\rangle-L(\xi^\pm(s),\dot\xi^\pm(s))=\max_{|v|\leqslant 2C}\left[\langle p^\pm(s),v\rangle-L(\xi^\pm(s),v)\right],
\end{equation}
\begin{equation}\label{superd}
p^\pm(s)\in D^+u(\xi^\pm(s)),\quad\forall\,s\in(-T^\pm,0],
\end{equation}
and
\begin{equation}\label{transv}
p^\pm(-T^\pm)=-\mu^\pm\nu^\pm.
\end{equation}
As a consequence, \eqref{pmpham} implies that $\xi^\pm$ and $p^\pm$ are of class $C^1$ and \eqref{maxpr} implies that
\begin{equation}\label{carp}
p^\pm(s)=L_v(\xi^\pm(s),\dot\xi^\pm(s)),\quad\forall\,s\in(-T^\pm,0).
\end{equation}
Hence,
\begin{equation}\label{boundmu}
|p^\pm(s)|\leqslant\max_{x\in \overline{B(\bar x,1+C)},|v|\leq C}|L_v(x,v)|,\quad\forall\,s\in[-T^\pm,0].
\end{equation}

\medskip
In order to prove \eqref{estimlemma}-\eqref{estimlemmaT}, we proceed in several steps.
\medskip

\noindent {\bf{Step 1}}:  Since $\partial\Omega$ is $C^{1,1}$, \eqref{H=0} and \eqref{boundmu}, we obtain
\begin{equation*}\begin{split}
0&=H(y^-,-\mu^-\nu^-)\geqslant-c_1|y^+-y^-|+H(y^+,-\mu^-\nu^-)\\
&\geqslant-c_1|y^+-y^-|+\langle -\mu^-\nu^-,\dot\xi^+(-T^+)\rangle-L(y^+,\dot\xi^+(-T^+))\\
&=-c_1|y^+-y^-|+\langle -\mu^-\nu^-+\mu^+\nu^+,\dot\xi^+(-T^+)\rangle+H(y^+,-\mu^+\nu^+)\\
&=-c_1|y^+-y^-|+\langle -\mu^-(\nu^--\nu^+),\dot\xi^+(-T^+)\rangle+\langle -(\mu^--\mu^+)\nu^+,\dot\xi^+(-T^+)\rangle\\
&\geqslant-c_2|y^+-y^-|+(\mu^--\mu^+)\langle -\nu^+,\dot\xi^+(-T^+)\rangle.
\end{split}
\end{equation*}
Since $\langle -\mu^+\nu^+,\dot\xi^+(-T^+)\rangle=L(y^+,\dot\xi^+(-T^+))\geq\alpha>0$ and $\mu^+$ is bounded by \eqref{boundmu}, we deduce that $\mu^--\mu^+\leq c_3|y^+-y^-|$. By exchanging + and -, we finally obtain
\begin{equation}\label{distmu}
|\mu^--\mu^+|\leqslant c_3|y^+-y^-|.
\end{equation}

\noindent {\bf{Step 2}}: Define a curve $\xi:\big[-|\xi^+(-T^-)-y^-|,0\big]\to \R^n$ by
$$s\mapsto\xi^+(-T^-)+s\frac{\xi^+(-T^-)-y^-}{|\xi^+(-T^-)-y^-|}$$
and set $t^*=\inf\{t\in(0,|\xi^+(-T^-)-y^-|]:\xi(-t)\in\partial\Omega\}$. Observe that
\begin{equation*}
\begin{split}
g(y^+)&+\Phi_L^\Omega(y^+,\xi^+(-T^-))= g(y^+)+\int^{-T^-}_{-T^+}L(\xi^+(s),\dot{\xi^+}(s))\ ds\\
&\leqslant g(\xi(-t^*))+\int^0_{-t^*}L\left(\xi(s),\frac{\xi^+(-T^-)-y^-}{|\xi^+(-T^-)-y^-|}\right)\ ds\\
&\leqslant g(\xi(-t^*))+M|\xi^+(-T^-)-y^-|,
\end{split}
\end{equation*}
which implies that
$$
\alpha(T^+-T^-)\leqslant\int_{-T^+}^{-T^-}L(\xi^+,\dot{\xi}^+)\ ds\leqslant\frac{1+\nu}{1-\nu}\cdot C'\cdot |\xi^+(-T^-)-y^-|.
$$
Consequently,
\begin{equation}\label{etaT2}
T^+-T^-\leqslant \frac{1}{\alpha}\cdot\frac{1+\nu}{1-\nu} \cdot C'\cdot|\xi^+(-T^-)-y^-|.
\end{equation}

By \eqref{pmpham}, \eqref{transv}, \eqref{boundmu}, \eqref{distmu}, \eqref{etaT2}, we have
\begin{equation}\label{p-T}
\begin{split}
|p^+(-T^-)-p^-(-T^-)|&\leqslant|p^+(-T^-)-p^+(-T^+)|+|\mu^+\nu^+-\mu^-\nu^-|\\
&\leqslant c_4\Big((T^+-T^-)+|y^+-y^-|\Big)\\
&\leqslant c_5|\xi^+(-T^-)-y^-|+c_4|y^+-\xi^+(-T^-)|\\
&\leqslant c_5|\xi^+(-T^-)-y^-|+c_6(T^+-T^-)\\
&\leqslant c_7|\xi^+(-T^-)-y^-|.
\end{split}
\end{equation}

\noindent {\bf{Step 3}}:  Set $H_p^\pm=H_p(\xi^\pm,p^\pm)$ and $H_x^\pm=H_x(\xi^\pm,p^\pm)$.
By \eqref{pmpham} and the regularity assumptions on $H$, we have for every $s\in(-T^-,0)$ , we have
\begin{equation}\label{1ineq}
\begin{split}
&\frac{d}{ds}\langle\xi^+-\xi^-,p^+-p^-\rangle\\
&=\langle\dot\xi^+-\dot\xi^-,p^+-p^-\rangle+\langle\xi^+-\xi^-,\dot p^+-\dot p^-\rangle\\
&=\langle H_p^+-H_p^-,p^+-p^-\rangle-\langle\xi^+-\xi^-,H_x^+-H_x^-\rangle\\
&\geqslant\langle H_p^+-H_p(\xi^+,p^-),p^+-p^-\rangle+\langle H_p(\xi^+,p^-)-H_p^-,p^+-p^-\rangle\\
&\qquad-c_8|\xi^+-\xi^-|(|\xi^+-\xi^-|+|p^+-p^-|)\\
&\geqslant c_9|p^+-p^-|^2-c_{10}|\xi^+-\xi^-||p^+-p^-|-c_{8}|\xi^+-\xi^-|^2\\
&\geqslant c_{11}|p^+-p^-|^2-c_{12}|\xi^+-\xi^-|^2.
\end{split}
\end{equation}
On the other hand, by  \eqref{superd}, \eqref{transv}, \eqref{boundmu}, \eqref{distmu} and Proposition \ref{global_semi_value}, we get
\begin{equation}\label{2ineq}
\begin{split}
&\,\int_{-T^-}^0\frac{d}{ds}\langle\xi^+-\xi^-,p^+-p^-\rangle\ ds\\
=&\,\langle x^+-x^-,p^+(0)-p^-(0)\rangle-\langle \xi^+(-T^-)-y^-, -\mu^+\nu^++\mu^-\nu^-\rangle\\
\leqslant&\,c_{13}|x^+-x^-|^2+\langle \xi^+(-T^-)-y^-,\mu^+(\nu^+-\nu^-)\rangle\\
&\qquad+\langle \xi^+(-T^-)-y^-,(\mu^+-\mu^-)\nu^-\rangle\\
\leqslant&\,c_{13}|x^+-x^-|^2+ c_{14}|\xi^+(-T^-)-y^-|^2.
\end{split}
\end{equation}
Combining \eqref{1ineq} and \eqref{2ineq}, we obtain
\begin{equation}\label{3ineq}
\begin{split}
	&\,\int_{-T^-}^0|p^+-p^-|^2\ ds\\
	\leqslant&\,c_{15}\left(|x^+-x^-|^2+ |\xi^+(-T^-)-y^-|^2+\int_{-T^-}^0|\xi^+-\xi^-|^2\ ds\right).
\end{split}
\end{equation}
Let $0<\varepsilon\leqslant1/4ec_{15}$. Then there exists $c_{17}>0$ such that
\begin{equation*}
\begin{split}
\frac{1}{2}\frac{d}{ds}|\xi^+-\xi^-|^2&=\langle\dot\xi^+-\dot\xi^-,\xi^+-\xi^-\rangle=\langle H_p^+-H_p^-,\xi^+-\xi^-\rangle\\
&\geqslant -c_{16}\left(|\xi^+-\xi^-|^2+|\xi^+-\xi^-||p^+-p^-|\right)\\
&\geqslant -\frac{c_{17}}{2}|\xi^+-\xi^-|^2-\frac{\varepsilon}{2}|p^+-p^-|^2.
\end{split}
\end{equation*}
Then we obtain
\begin{equation}\label{estxi}
\begin{split}
	&\,|\xi^+(s)-\xi^-(s)|^2\\
	\leqslant&\,\left(|x^+-x^-|^2+\varepsilon\int_{-T^-}^0|p^+(r)-p^-(r)|^2\ dr\right)e^{c_{17}T^-}
\end{split}
\end{equation}
for every $s\in[-T^-,0]$.
By \eqref{3ineq} and \eqref{estxi} we have
\begin{align*}
	&\,\int_{-T^-}^0|p^+-p^-|^2\ ds\\
	\leqslant&\,c_{15}\left[|x^+-x^-|^2+2\left(|x^+-x^-|^2+\varepsilon\int_{-T^-}^0|p^+-p^-|^2\ ds\right)e^{c_{17}T^-}\right].
\end{align*}
By choosing $\eta$ sufficiently small and recalling \eqref{etaT}, we can assume $c_{17}T^-\leqslant1$. From our choice of $\varepsilon$, we can conclude that
\begin{equation}\label{lastp}
\int_{-T^-}^0|p^+-p^-|^2\ ds\leqslant c_{18}|x^+-x^-|^2.
\end{equation}
Finally, replacing \eqref{lastp} in \eqref{estxi}, we obtain
\begin{equation}\label{lastb}
|\xi^+(s)-\xi^-(s)|^2\leqslant c_{19}|x^+-x^-|^2.
\end{equation}
By \eqref{etaT2} and \eqref{lastb}, we deduce \eqref{estimlemmaT}.

\noindent {\bf{Step 4}}: Analogously, since
\begin{equation*}
\begin{split}
\frac{1}{2}\frac{d}{ds}|p^+-p^-|^2&=\langle\dot p^+-\dot p^-,p^+-p^-\rangle=\langle -H_x^++H_x^-,p^+-p^-\rangle\\
&\leqslant c_{20}\left(|\xi^+-\xi^-||p^+-p^-|+|p^+-p^-|^2\right)\\
&\leqslant \frac{1}{2}|\xi^+-\xi^-|^2+\frac{c_{21}}{2}|p^+-p^-|^2,
\end{split}
\end{equation*}
by Gronwall's inequality, we obtain
\begin{equation}\label{estp}
\begin{split}
	&\,|p^+(s)-p^-(s)|^2\\
	\leqslant&\,\left(|p^+(-T^-)-p^-(-T^-)|^2+\int_{-T^-}^0|\xi^+(r)-\xi^-(r)|^2\ dr\right)e^{c_{21}T^-}
\end{split}
\end{equation}
for every $s\in[-T^-,0]$.
By \eqref{p-T}, \eqref{lastb} and \eqref{estp}, we deduce that
$$|p^+(s)-p^-(s)|^2\leqslant c_{22}|x^+-x^-|^2.$$
Consequently,
\begin{equation}\label{distdotxi}
|\dot\xi^+-\dot\xi^-|=|H_p^+-H_p^-|\leqslant c_{23}\left(|\xi^+-\xi^-|+|p^+-p^-|\right)\leqslant c_{24}|x^+-x^-|.
\end{equation}

The proof is  complete.
\end{proof}

\begin{Pro}\label{semiconvex}
Let $g$ satisfy ({\bf G1}) and ({\bf G2}).
Let $\Omega\subset\R^n$ be a bounded domain,
and let $L$ be a Tonelli Lagrangian, which also satisfy
\begin{equation}\label{hppmp2}
\text{$\Omega$ has $C^2$ boundary and $L\geqslant\alpha>0$.}
\end{equation}
Then for any $\bar x\in\partial \Omega$ there exist $\tilde{\eta}$, $\tilde{C}>0$ such that $u$ is semiconvex with constant $\tilde{C}$ on $B(\bar x,\tilde{\eta})\cap\overline{\Omega}$.
\end{Pro}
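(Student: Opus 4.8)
The strategy is to leverage the sharp sensitivity estimates of Lemma~\ref{lemmasemiconv} to promote the local semiconcavity of $u$ to full $C^{1,1}$-regularity in a one-sided neighbourhood of $\bar x$, from which semiconvexity (with constant $\mathrm{Lip}(\nabla u)$) is automatic. I would not try to prove the midpoint inequality by a direct competitor-curve construction: the natural ``average of the two optimal exit arcs'' competitor produces a first-order error whenever the two trajectories spend comparable time near $\partial\Omega$, and it is precisely the $C^1$-dependence of the optimal arc on its endpoint recorded in estimate~\eqref{estimlemma} (not merely the $C^0$-dependence) that resolves this. Fix $\bar x\in\partial\Omega$, let $\eta,C>0$ be the constants furnished by Lemma~\ref{lemmasemiconv}, and set $\Omega_0:=B(\bar x,\eta)\cap\Omega$.

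First I would show that $D^*u(x)$ is a singleton for every $x\in\Omega_0$. Given $p\in D^*u(x)$, Proposition~\ref{key1} yields a $u$-calibrated curve $\xi_p\colon[-T_p,0]\to\overline\Omega$ with $\xi_p(-T_p)\in\partial\Omega$, $\xi_p(0)=x$ and $p=\frac{\partial L}{\partial v}(x,\dot\xi_p(0))$; since $u|_{\partial\Omega}=g$, combining the calibration identity with the definition of $u$ (as in Proposition~\ref{pro_calibrated}) gives $u(x)=g(\xi_p(-T_p))+\int_{-T_p}^0 L(\xi_p,\dot\xi_p)\,ds$ with $\int_{-T_p}^0 L(\xi_p,\dot\xi_p)\,ds=A^\Omega_{T_p}(\xi_p(-T_p),x)$ and $T_p>0$, so $\xi_p$ is admissible in Lemma~\ref{lemmasemiconv} (which, applied to it alone, also shows $T_p\le1$ and $\xi_p\in C^1$). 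If $p_1,p_2\in D^*u(x)$ carry such curves with times, say, $T_1\ge T_2$, then estimate~\eqref{estimlemma} with $x^+=x^-=x$ forces $\xi_1\equiv\xi_2$ on $[-T_2,0]$, hence $\dot\xi_1(0)=\dot\xi_2(0)$ and $p_1=p_2$. In particular the optimal exit arc $\xi_x$ from each $x\in\Omega_0$ is unique.

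Next, since $u$ is locally semiconcave in $\Omega$ (Lemma~\ref{semi_value}) one has $D^+u(x)=\mathrm{co}\,D^*u(x)$ (see \cite{Cannarsa-Sinestrari}), so $D^+u(x)$ is a singleton on $\Omega_0$ and Proposition~\ref{basic_facts_of_superdifferential}(d) gives $u\in C^1(\Omega_0)$ with $\nabla u(x)=\frac{\partial L}{\partial v}(x,\dot\xi_x(0))$. For $x^+,x^-\in\Omega_0$, Lemma~\ref{lemmasemiconv} (after relabelling so that $T^+\ge T^-$) together with~\eqref{estimlemma} at $s=0$ gives $|\dot\xi_{x^+}(0)-\dot\xi_{x^-}(0)|\le C|x^+-x^-|$; since these velocities stay in a fixed compact set by Lemma~\ref{upper_bound} and the points $x^\pm$ in a fixed compact neighbourhood of $\bar x$, the map $\frac{\partial L}{\partial v}$ is Lipschitz there and therefore $|\nabla u(x^+)-\nabla u(x^-)|\le\tilde C'|x^+-x^-|$ for some $\tilde C'$ depending only on $L$, $\Omega$, $\bar x$. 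Hence $u\in C^{1,1}(\Omega_0)$.

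Finally, to pass from $\Omega_0$ to $B(\bar x,\tilde\eta)\cap\overline\Omega$ (for $\tilde\eta<\eta$ small), take a segment $[x,y]\subset B(\bar x,\tilde\eta)\cap\overline\Omega$ with midpoint $m$; since the $C^2$ domain $\Omega$ satisfies a uniform exterior sphere condition, for all sufficiently small $d>0$ the translate $[x,y]+d\nu$ lies in $B(\bar x,\eta)\cap\Omega=\Omega_0$, where $\nu$ is the inner unit normal of $\Omega$ at $\bar x$, so the $C^{1,1}$-bound applies on this convex segment and yields $2u(m+d\nu)-u(x+d\nu)-u(y+d\nu)\le\frac{\tilde C'}{4}|x-y|^2$; letting $d\downarrow0$ and using continuity of $u$ gives $2u(m)-u(x)-u(y)\le\frac{\tilde C'}{4}|x-y|^2$, which by the midpoint criterion is semiconvexity of $u$ on $B(\bar x,\tilde\eta)\cap\overline\Omega$ with constant $\tilde C=\tilde C'/2$. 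The only steps needing care are the interface with Propositions~\ref{key1} and~\ref{pro_calibrated} in the first step — one must verify that the curve attached to $p\in D^*u(x)$ is a genuine optimal exit arc with $0<T_p\le1$, so the hypotheses and conclusions of Lemma~\ref{lemmasemiconv} are mutually consistent — and the elementary but geometry-dependent inward-shift in the last step; I do not expect any essential obstacle beyond this.
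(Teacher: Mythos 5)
Your proof is correct, and it follows a genuinely different route from the paper's, so it is worth contrasting.

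The paper's own argument is a direct, competitor-curve verification of the midpoint inequality: given $x\pm h$ with optimal exit arcs $\xi^\pm$, it forms the averaged arc $\xi=\tfrac12(\xi^++\xi^-)$ (extended suitably past time $-T^-$), distinguishes whether $\xi$ touches $\partial\Omega$ or not, and in each case estimates $2u(x)-u(x+h)-u(x-h)$ term-by-term using the $C^0$ and $C^1$ sensitivity bounds of Lemma~\ref{lemmasemiconv}, together with condition (\textbf{G2}) and the exterior-sphere estimate \eqref{distdelta}. Your argument instead turns Lemma~\ref{lemmasemiconv} directly into a regularity statement: (i) applying \eqref{estimlemma}--\eqref{estimlemmaT} with $x^+=x^-=x$ shows that the optimal exit arc, and hence the reachable gradient $p=L_v(x,\dot\xi_x(0))$, is unique for each $x\in\Omega_0=B(\bar x,\eta)\cap\Omega$; (ii) since $u$ is locally semiconcave there, $D^+u=\mathrm{co}\,D^*u$ is then a singleton, so $u\in C^1(\Omega_0)$ by Proposition~\ref{basic_facts_of_superdifferential}(d); (iii) applying \eqref{estimlemma} at $s=0$ for distinct base points gives $\nabla u$ Lipschitz; (iv) a small inward translation along the inner normal pushes the midpoint estimate up to the closed set, and then $\tilde\eta\downarrow 0$ in continuity. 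This is clean and, as a bonus, yields one-sided $C^{1,1}$ regularity of $u$ near $\partial\Omega$ (hence both semiconcavity and semiconvexity there with the same constant), which is strictly more than the Proposition claims. The two hinges that deserve the care you flagged are indeed the only ones: that the calibrated arc supplied by Proposition~\ref{key1} truly satisfies the optimality hypothesis of Lemma~\ref{lemmasemiconv} (it does, since calibration $+\ u|_{\partial\Omega}=g$ gives $u(x)=g(\xi_p(-T_p))+\int L$, and domination forces this integral to equal $A^\Omega_{T_p}$), and that the inward shift keeps the translated segment inside $\Omega$ (it does, for $\tilde\eta$ small enough to stay in a $C^2$ graph chart, since adding $d>0$ in the inner-normal direction strictly increases the vertical coordinate). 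Note that even though your write-up only invokes Lemma~\ref{semi_value}, the proof of Lemma~\ref{lemmasemiconv} itself uses the global semiconcavity of Proposition~\ref{global_semi_value}, so hypothesis (\textbf{G2}) is still needed through that channel; otherwise the dependence on (\textbf{G1})--(\textbf{G2}) matches the paper's.
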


\begin{proof}
Throughout this proof we use $C$ to denote a generic positive constant not necessarily the same in any two places.
Fix $\bar x\in\partial\Omega$ and let $\eta$ be as in Lemma \ref{lemmasemiconv}. 
By assumption \eqref{hppmp2}, there exist $\delta$, $c'>0$ such that for any $y^+,y^-\in B(\bar x,\delta)\cap\partial\Omega$, we have
\begin{equation}\label{distdelta}
d_{\partial\Omega}\left(\frac{y^++y^-}{2}\right)\leqslant c'|y^+-y^-|^2.
\end{equation}
We shall prove the existence of $\tilde{\eta},\tilde{C}>0$ such that
$$
\Delta:=2u(x)-u(x+h)-u(x-h)\leqslant \tilde{C}|h|^2,
$$
for all $x,h\in\R^n$ satisfying $[x-h,x+h]\in B(\bar x,\tilde{\eta})\cap\overline{\Omega}$.
Fix such $x$ and $h$. By Corollary \ref{exit_time_prob}, there exist $y^+,y^-\in\partial\Omega$, $T^+, T^->0$  and arcs  $\xi^+\in\Gamma^{-T^+,0}_{y^+,x+h}(\overline{\Omega})$, $\xi^-\in\Gamma^{-T^-,0}_{y^-,x-h}(\overline{\Omega})$ such that
	$$
	u(x^\pm )=g(y^\pm)+\int^0_{-T^\pm}L(\xi^\pm(s),\dot{\xi^\pm}(s))\ ds,
	$$
	where $x^\pm=x\pm h$.
Remarking as in \eqref{etaT} that $T^\pm\leqslant M\tilde{\eta}/\alpha$ and recalling \eqref{esssupdotxi}, we obtain $\xi^\pm([-T^\pm,0])\subset B(\bar x, \tilde{\eta}+CM\tilde{\eta}/\alpha)$. Choose $\tilde{\eta}\leqslant\eta$ such that $\xi^\pm([-T^\pm,0])\subset B(\bar x, \delta)$.

We can suppose without loss of generality that $T^-\leqslant T^+$.	
Set $T^*=\frac{T^++T^-}{2}$ and let $\xi:[-T^*,0]\to\R^n$ be the absolutely continuous arc satisfying $\xi(0)=x$ defined by
$$
\xi(s)=\left\{\begin{array}{llll}\dfrac{\xi^+(s)+\xi^-(s)}{2}, & s\in[-T^-,0],\\
\dfrac{\xi^+(s)+\xi^+(s+T^--T^*)-\xi^+(-T^*)+\xi^-(-T^-)}{2}, & s\in[-T^*,-T^-).\end{array}\right.
$$
It is now convenient to distinguish two cases.
\medskip

\noindent {\bf{Case 1}}: $\xi([-T^*,0])\cap\partial\Omega\neq\emptyset$. Call $\tau^*=\inf\{s\in[0,T^*]:\xi(-s)\in\partial\Omega\}$.
\medskip

If $\tau^*\leqslant T^-$, by the regularity of $L$ and Lemma \ref{lemmasemiconv} we have
$
\Delta\leqslant A+B,
$
where
$
A:=2g(\xi(-\tau^*))-g(y^+)-g(y^-)$
and
$$
B:=2\int_{-\tau^*}^0L(\xi(s),\dot\xi(s))\ ds
-\int_{-T^+}^0L(\xi^+(s),\dot\xi^+(s))\ ds-\int_{-T^-}^0L(\xi^-(s),\dot\xi^-(s))\ ds.
$$
It is easy to see that
	\begin{align*}	
	B\leqslant&\,
	\int_{-\tau^*}^0\left(2L\Big(\dfrac{\xi^+(s)+\xi^-(s)}{2},\dfrac{\dot\xi^+(s)+\dot\xi^-(s)}{2}\Big)-L(\xi^+(s),\dot{\xi}^+(s))-L(\xi^-(s),\dot{\xi}^-(s))\right)\ ds\\
	\leqslant&\,C\int_{-\tau^*}^0\left(|\xi^+(s)-\xi^-(s)|^2+|\dot\xi^+(s)-\dot\xi^-(s)|^2\right)\ ds\\
	\leqslant&\, C|h|^2.
\end{align*}
Now we estimate $A$ as follows.
\begin{align*}
	A&=2G(\xi(-T^*))-g(y^+)-g(y^-)+2g(\xi(-\tau^*))-2G(\xi(-T^*))\\
	&\leqslant C|y^--y^+|^2+2g(\xi(-\tau^*))-2G(\xi(-T^*)).
\end{align*}
In view of Lemma \ref{lemmasemiconv}, we have
$$
|y^--y^+|\leqslant |y^--\xi^+(-T^-)|+|\xi^+(-T^-)-y^+|\leqslant C|h|.
$$
Set $\Sigma=2g(\xi(-\tau^*))-2G(\xi(-T^*))$. By ({\bf G2}), we have
\begin{align*}
	\Sigma &\leqslant 2\langle\nabla G(\xi(-\tau^*)),\xi(-\tau^*)-\xi(-T^*)\rangle+C|\xi(-\tau^*)-\xi(-T^*)|^2\\
	&=2\Big\langle\nabla G(\xi(-\tau^*)),(\frac{\xi(-\tau^*)-y^-}{2}+\frac{\xi(-\tau^*)-y^+}{2})\Big\rangle+C|\xi(-\tau^*)-\xi(-T^*)|^2\\
	&\leqslant C|\xi(-\tau^*)-y^-|^2+C|\xi(-\tau^*)-y^+|^2+C|\xi(-\tau^*)-\xi(-T^*)|^2\\
	&=:a_1+a_2+a_3,
\end{align*}
where $a_1=C|\xi(-\tau^*)-y^-|^2$, $a_2=C|\xi(-\tau^*)-y^+|^2$ and $a_3=C|\xi(-\tau^*)-\xi(-T^*)|^2$.

We now prove that $a_1\leqslant C|h|^2$. By similar arguments one can show that $a_2\leqslant C|h|^2$ and 
$a_3\leqslant C|h|^2$ follows from that above two inequalities.  Since
\begin{align*}
	|\xi(-\tau^*)-y^-|&\leqslant \frac{|\xi^-(-\tau^*)-y^-|}{2}+\frac{|\xi^+(-\tau^*)-y^-|}{2}\\
	& \leqslant|\xi^-(-\tau^*)-y^-|+\frac{|\xi^-(-\tau^*)-\xi^+(-\tau^*)|}{2}\\
	&\leqslant C|T^--\tau^*|+C|h|,
\end{align*}
then if $|T^--\tau^*|\leqslant C|h|$, we have 
$a_1\leqslant C|h|^2$. So, it suffices to show that $|T^--\tau^*|\leqslant C|h|$.
Note that 
\begin{align*}
	g(y^-)+\Phi_L^\Omega(y^-,\xi^-(-\tau^*))\leqslant g(z)+\Phi_L^\Omega(z,\xi^-(-\tau^*))\leqslant g(z)+Cd_{\partial\Omega}(\xi^-(-\tau^*))\leqslant g(z)+C|h|,
\end{align*}
where $z$ is an arbitrary projection of $\xi^-(-\tau^*)$ on $\partial\Omega$. Then we get
$$
\Phi_L^\Omega(y^-,\xi^-(-\tau^*))\leqslant \frac{1+\nu}{1-\nu}\cdot C|h|.
$$
Thus we have
$$
T^--\tau^*\leqslant \frac{1}{\alpha}\cdot \frac{1+\nu}{1-\nu}\cdot C|h|.
$$
So far, we have shown that $A\leqslant C|h|^2$ and thus $\Delta\leqslant C|h|^2$.

If $T^-<\tau^*\leqslant T^*$, then we have
\begin{align*}
\Delta &\,\leqslant2g(\xi(-\tau^*))-g(y^+)-g(y^-)\\
	&\,+\int_{-T^-}^0\left(2L(\xi(s),\dot\xi(s))-L(\xi^+(s),\dot{\xi}^+(s))-L(\xi^-(s),\dot{\xi}^-(s))\right)\ ds\\
	&\,+2\int_{-\tau^*}^{-T^-}L(\xi(s),\dot\xi(s))\ ds-\int_{-T^*}^{-T^-}L(\xi^+(s),\dot\xi^+(s))\ ds-\int_{-T^+}^{-T^*}L(\xi^+(s),\dot\xi^+(s))\ ds.
\end{align*}
By arguments analogous to the ones used in the previous case, one can show 
$$
2g(\xi(-\tau^*))-g(y^+)-g(y^-)\leqslant C|h|^2.
$$
The convexity of $L$ with respect to the variable $v$ yields
\begin{align*}
	&\int_{-T^-}^0\left(2L(\xi(s),\dot\xi(s))-L(\xi^+(s),\dot{\xi}^+(s))-L(\xi^-(s),\dot{\xi}^-(s))\right)\ ds\\
	&\,+2\int_{-\tau^*}^{-T^-}L(\xi(s),\dot\xi(s))\ ds-\int_{-T^*}^{-T^-}L(\xi^+(s),\dot\xi^+(s))\ ds-\int_{-T^+}^{-T^*}L(\xi^+(s),\dot\xi^+(s))\ ds\\
	\leqslant&\,C |h|^2+2\int_{-\tau^*}^{-T^-}L\Big(\xi(s),\dfrac{\dot\xi^+(s)+\dot\xi^+(s+T^--T^*)}{2}\Big)\ ds\\
	\qquad&-\int_{-\tau^*}^{-T^-}L(\xi^+(s),\dot\xi^+(s))\ ds-\int_{-T^*-(\tau^*-T^-)}^{-T^*}L(\xi^+(s),\dot\xi^+(s))\ ds\\
	\leqslant&\,C |h|^2+\int_{-\tau^*}^{-T^-}\Big(L(\xi(s),\dot\xi^+(s))+L(\xi(s),\dot\xi^+(s+T^--T^*))\Big)\ ds\\
	\qquad&-\int_{-\tau^*}^{-T^-}L(\xi^+(s),\dot\xi^+(s))\ ds-\int_{-\tau^*}^{-T^-}L(\xi^+(s+T^--T^*),\dot\xi^+(s+T^--T^*))\ ds.
\end{align*}
Therefore, we get
\begin{equation}\label{case1b}
\begin{split}
	\Delta\leqslant&\,C |h|^2+\int_{-\tau^*}^{-T^-}\left|L(\xi(s),\dot\xi^+(s))-L(\xi^+(s),\dot\xi^+(s))\right|\ ds\\
	\qquad&+\int_{-\tau^*}^{-T^-}\left|L(\xi(s),\dot\xi^+(s+T^--T^*))-L(\xi^+(s+T^--T^*),\dot\xi^+(s+T^--T^*))\right|\ ds\\
	\leqslant&\,C |h|^2+C\int_{-\tau^*}^{-T^-}\Big(\left|\xi(s)-\xi^+(s)\right|+\left|\xi(s)-\xi^+(s+T^--T^*)\right|\Big)\ ds.
\end{split}
\end{equation}
Observe that for $s\in[-T^*,-T^-)$ we have
\begin{equation*}
\begin{split}
	\xi(s)&=\xi(-T^-)+\dfrac{1}{2}\left(\xi^+(s)-\xi^+(-T^-)\right)+\dfrac{1}{2}\left(\xi^+(s+T^--T^*)-\xi^+(-T^*)\right)\\
	&=\dfrac{1}{2}\left(y^-+\xi^+(s)+\xi^+(s+T^--T^*)-\xi^+(-T^*)\right).
\end{split}
\end{equation*}
Therefore, by \eqref{estimlemma}-\eqref{estimlemmaT} we deduce
\begin{equation}\label{perc1}
\begin{split}
	&\,|\xi(s)-\xi^+(s)|\\
	\leqslant&\,\dfrac{1}{2}\Big(|y^--\xi^+(s)|+|\xi^+(s+T^--T^*)-\xi^+(-T^*)|\Big)\\
	\leqslant&\,\dfrac{1}{2}\Big(|y^--\xi^+(-T^-)|+|\xi^+(-T^-)-\xi^+(s)|+C\,\dfrac{T^+-T^-}{2}\Big)\\
	\leqslant&\,C|h|
\end{split}
\end{equation}
and
\begin{equation}\label{perc2}
\begin{split}
	&\,|\xi(s)-\xi^+(s+T^--T^*)|\\
	&\,\leqslant\dfrac{1}{2}\Big(|y^--\xi^+(-T^*)|+|\xi^+(s)-\xi^+(s+T^--T^*)|\Big)\\
	&\,\leqslant\dfrac{1}{2}\Big(|y^--\xi^+(-T^-)|+|\xi^+(-T^-)-\xi^+(-T^*)|+C\,\dfrac{T^+-T^-}{2}\Big)\\
	&\,\leqslant C|h|.
\end{split}
\end{equation}
Finally, by \eqref{case1b}, \eqref{perc1} and \eqref{perc2} we obtain
\begin{equation*}
2u(x)-u(x+h)-u(x-h)\leqslant C |h|^2+C(\tau^*-T^-)|h|\leqslant C |h|^2.
\end{equation*}

\medskip

\noindent {\bf{Case 2}}: $\xi([-T^*,0])\cap\partial\Omega=\emptyset$.

\medskip
\noindent Since $\xi(-T^*)=\frac{y^-+y^+}{2}$, by \eqref{distdelta} there exists $y^*\in \partial\Omega$ such that
$$|\xi(-T^*)-y^*|\leqslant c'|y^+-y^-|^2\leqslant C|h|^2.$$
Thus,
\begin{equation*}
\begin{split}
	\Delta\leqslant&\,2g(y^*)-g(y^+)-g(y^-)\\
	&\,+\int_{-T^-}^0\left(2L(\xi(s),\dot\xi(s))-L(\xi^+(s),\dot{\xi}^+(s))-L(\xi^-(s),\dot{\xi}^-(s))\right)\ ds\\
	&\,+2\int_{-T^*}^{-T^-}L(\xi,\dot{\xi})\ ds-\int_{-T^+}^{-T^-}L(\xi^+,\dot{\xi}^+)\ ds\\
	&\,+2\int^0_{-|\xi(-T^*)-y^*|}L\left(\xi(-T^*)+s\frac{\xi(-T^*)-y^*}{|\xi(-T^*)-y^*|},\frac{\xi(-T^*)-y^*}{|\xi(-T^*)-y^*|}\right)\ ds\\
	\leqslant&\,2g(y^*)-g(y^+)-g(y^-)+C |h|^2+2M|\xi(-T^*)-y^*|\\
	&+2\int_{-T^*}^{-T^-}L(\xi,\dot{\xi})\ ds-\int_{-T^+}^{-T^-}L(\xi^+,\dot{\xi}^+)\ ds\\
	\leqslant&\, 2g(y^*)-g(y^+)-g(y^-)+ C |h|^2+2\int_{-T^*}^{-T^-}L(\xi,\dot{\xi})\ ds-\int_{-T^+}^{-T^-}L(\xi^+,\dot{\xi}^+)\ ds.
\end{split}
\end{equation*}
By similar arguments used in Case 1, one can obtain that
$|\xi(s)-\xi^+(s)|\leqslant C|h|$ and $|\xi(s)-\xi^+(s+T^--T^*)|\leqslant C|h|$ for $s\in[-T^*,-T^-]$.
Thus, we have
\begin{align*}
	&\,2\int_{-T^*}^{-T^-}L(\xi,\dot{\xi})\ ds-\int_{-T^+}^{-T^-}L(\xi^+,\dot{\xi}^+)\ ds\\
	&\,\leqslant \int_{-T^*}^{-T^-}L(\xi,\dot{\xi}^+)+L(\xi,\dot{\xi}^+(s+T^--T^*))\ ds-\int_{-T^+}^{-T^*}L(\xi^+,\dot{\xi}^+)\ ds-\int_{-T^*}^{-T^-}L(\xi^+,\dot{\xi}^+)\ ds\\
	&\, \leqslant C|h|^2.
\end{align*}
Note that 
\begin{align*}
	2g(y^*)-g(y^+)-g(y^-)&=2G(\xi(-T^*))-g(y^+)-g(y^-)+2g(y^*)-2G(\xi(-T^*))\\
	&\leqslant C|y^--y^+|^2+2g(y^*)-2G(\xi(-T^*))\\
	&\leqslant C|h|^2+2g(y^*)-2G(\xi(-T^*)).
\end{align*}
Using ({\bf G2}), we have
\begin{align*}
	2g(y^*)-2G(\xi(-T^*))&\leqslant 2\langle\nabla G(y^*),y^*-\xi(-T^*)\rangle+C|y^*-\xi(-T^*)|^2\\
	&=2\Big\langle\nabla G(y^*),(\frac{y^*-y^-}{2}+\frac{y^*-y^+}{2})\Big\rangle+C|y^*-\xi(-T^*)|^2\\
	&\leqslant C|y^*-y^-|^2+C|y^*-y^+|^2+C|y^*-\xi(-T^*)|^2\\
	&=C|h|^2,
\end{align*}
which implies that $\Delta\leqslant C|h|^2$. 

The proof is complete.
\end{proof}

\vskip.3cm

{\appendix

\section{Properties of relative Ma\~n\'e's potentials}
\label{sse:a2}

\begin{proof}[Proof of Lemma \ref{poten_proper}]
The triangle inequality in (1) can be obtained directly by the definition.

To prove (2), first we have $A^{\Omega}_t(x,x)\leqslant|L(x,0)|t$ for any $x\in\overline{\Omega}$ and $t>0$. Thus $\Phi^{\Omega}_L(x,x)\leqslant0$. If $\Phi^{\Omega}_L(x,x)<0$, then there exists $T>0$ such that $A^{\Omega}_T(x,x)<0$. It follows $-\inf_{t>0}\frac 1tA^{\Omega}_t(x,x)=c_{\Omega}(L)>0$ which leads to a contradiction.

Now we turn to the proof of (3). For any $x$, $y\in\overline{\Omega}$, since $\Omega$ is $C$-quasiconvex (see Definition \ref{x_C_reachable} and Remark \ref{quasiconvex}), then there exists a Lipschitz curve $\gamma\in\Gamma^{0,t(x,y)}_{x,y}(\overline{\Omega})$ with $|\dot{\gamma}|=1$ and $t(x,y)\leqslant C|x-y|$. Thus, we have that
\begin{equation}\label{eq:ap_Mane}
	\Phi^{\Omega}_L(x,y)\leqslant\int_0^{t(x,y)}L(\gamma,\dot{\gamma})ds\leqslant\theta_2(1)t(x,y)\leqslant \theta_2(1)C|x-y|. 
\end{equation}
Due to (1) and (2), we have that
$$
\Phi^{\Omega}_L(x,y)+\Phi^{\Omega}_L(y,x)\geqslant\Phi^{\Omega}_L(x,x)=0.
$$
Therefore, from \eqref{eq:ap_Mane}, we obtain
$$
\Phi^{\Omega}_L(x,y)\geqslant-\Phi^{\Omega}_L(y,x)\geqslant-\theta_2(1)C|x-y|.
$$
This completes the proof of (3).

Finally, by (1) it is easy to see that 
\begin{align*}
	\Phi^{\Omega}_L(x_1,y_1)-\Phi^{\Omega}_L(x_2,y_2)\leqslant& \Phi^{\Omega}_L(x_1,x_2)+\Phi^{\Omega}_L(y_2,y_1)\\
	\leqslant&\theta_2(1)C(|x_1-x_2|+|y_1-y_2|).
\end{align*}
Changing the roles of $(x_1,y_1)$ and $(x_2,y_2)$ we obtain (4).
\end{proof}

\begin{proof}[Proof of Lemma \ref{eq:equiv_Phi_0}]
	In order to show the equivalence of (1) and (2), we only need to prove that, if there exist $x_0$, $y_0\in\overline{\Omega}$ such that $\Phi^{\Omega}_L(x_0,y_0)>-\infty$, then $\Phi^{\Omega}_L(x,y)>-\infty$ for all $x,y\in\overline{\Omega}$. Otherwise, there would be 
	$x_1$, $y_1\in\overline{\Omega}$ such that $\Phi^{\Omega}_L(x_1,y_1)=-\infty$. By Lemma \ref{poten_proper} (1), we get
	\[
	\Phi^{\Omega}_L(x_1,x_1)\leqslant \Phi^{\Omega}_L(x_1,y_1)+\Phi^{\Omega}_L(y_1,x_1)=-\infty.
	\]
	Thus, there is a closed curve $\gamma_1:[0,T_1]\to\overline{\Omega}$ with $\gamma_1(0)=\gamma_1(T_1)=x_1$ such that
	
	\[
	\int_0^{T_1}L(\gamma_1,\dot{\gamma}_1)\ ds<0.
	\]
	By going around $\gamma_1$ many times, it is clear that
$\Phi^{\Omega}_L(x,y)=-\infty$ for all $x$, $y\in\overline{\Omega}$, a contradiction.

Next, we show the equivalence of (2) and (3). 
If $c_{\Omega}(L)>0$, then by definition, we have
\[
\inf_{t>0,x\in\overline{\Omega}}\frac 1tA^{\Omega}_t(x,x)<0,
\]
which implies there is a closed curve $\gamma_2:[0,T_2]\to\overline{\Omega}$ with $\gamma_2(0)=\gamma_2(T_2)$ such that
	\[
	\int_0^{T_2}L(\gamma_2,\dot{\gamma}_2)\ ds<0.
	\]
Thus, $\Phi^{\Omega}_L(x,y)=-\infty$ for all $x$, $y\in\overline{\Omega}$. This shows that, if (2) is satisfied, then (3) holds. On the other hand, if $\Phi^{\Omega}_L(x,y)=-\infty$ for all $x$, $y\in\overline{\Omega}$, then there is a closed curve $\gamma_3:[0,T_3]\to\overline{\Omega}$ with $\gamma_3(0)=\gamma_3(T_3)$ such that
	
	\[
	\int_0^{T_3}L(\gamma_3,\dot{\gamma}_3)\ ds<0,
	\]
which implies $c_{\Omega}(L)>0$. This completes the proof of the equivalence of (2) and (3).
\end{proof}

\section{Changing a Lagrangian by an exact 1-form}

\begin{Pro}\label{exact_form}
Let $\frac{\partial L}{\partial v}(x,0)$ be an exact 1-form, say $\frac{\partial L}{\partial v}(x,0)=DS(x)$ with some function $S$ of $C^3$ class on $\R^n$. We set
\begin{gather*}
		L_1(x,v)=L(x,v)-\langle DS(x),v\rangle,\quad (x,v)\in\R^n\times\R^n,\\
	 g_1(x)=g(x)-S(x),\quad x\in\partial\Omega,\\
H_1(x,p)=H(x,p+DS(x)),\quad (x,p)\in\R^n\times\R^n.
\end{gather*}
If $u$ is the value function of \eqref{eq:represent_formulae} with respect to $(L,g)$, then $u_1=u-S$ is the value function of \eqref{eq:represent_formulae} with respect to $(L_1,g_1)$ and $u_1$ is a solution of \eqref{eq:HJ_BP} with respect to $(H_1,g_1)$ in the viscosity sense. In particular, $c_{\Omega}(L_1)=c_{\Omega}(L)$ and the compatibility condition \eqref{eq:supcritical1} is also satisfied for $L_1$.
\end{Pro}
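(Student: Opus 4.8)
The plan is to treat this as a change of gauge by the exact one-form $DS$ and to transfer everything through the relative fundamental solution. Since $S\in C^3\subset C^1$, for every admissible arc $\xi\in\Gamma^{s,t}_{x,y}(\overline{\Omega})$ the map $\tau\mapsto S(\xi(\tau))$ is absolutely continuous with derivative $\langle DS(\xi(\tau)),\dot\xi(\tau)\rangle$ a.e., so that
$$
\int_s^tL_1(\xi,\dot\xi)\,d\tau=\int_s^tL(\xi,\dot\xi)\,d\tau-\big(S(\xi(t))-S(\xi(s))\big)=\int_s^tL(\xi,\dot\xi)\,d\tau+S(x)-S(y),
$$
and, crucially, the constraint set $\Gamma^{s,t}_{x,y}(\overline{\Omega})$ is unchanged when $L$ is replaced by $L_1$. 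Taking infima (first over $\Gamma^{0,t}_{x,y}(\overline{\Omega})$, then over $t>0$) yields $A^{\Omega,L_1}_t(x,y)=A^{\Omega,L}_t(x,y)+S(x)-S(y)$ and $\Phi^{\Omega}_{L_1}(x,y)=\Phi^{\Omega}_L(x,y)+S(x)-S(y)$. In particular $A^{\Omega,L_1}_t(x,x)=A^{\Omega,L}_t(x,x)$, so $c_{\Omega}(L_1)=c_{\Omega}(L)$ by \eqref{eq:mane}; and, using \eqref{eq:supcritical1} for $g$,
$$
g_1(x)-g_1(y)=\big(g(x)-g(y)\big)-\big(S(x)-S(y)\big)\leqslant\Phi^{\Omega}_L(y,x)-\big(S(x)-S(y)\big)=\Phi^{\Omega}_{L_1}(y,x),
$$
so the compatibility condition holds for $(L_1,g_1)$. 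One also checks that $L_1$ is again a Tonelli Lagrangian: convexity in $v$ is inherited since the added term is affine in $v$, and superlinearity follows from (L2) together with the boundedness of $DS$ on the compact set $\overline{\Omega}$ — only the behaviour of $L_1$ over $\overline{\Omega}$ ever enters the constructions, so one may as well take $S$ with globally bounded gradient. Hence $(L_1,\Omega,g_1)$ satisfies the standing hypotheses, and (SH4$'$) holds for it whenever it is assumed for $(L,g)$.

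With the potential identity in hand, the value function of \eqref{eq:represent_formulae} associated with $(L_1,g_1)$ is
$$
\inf_{y\in\partial\Omega}\big\{g_1(y)+\Phi^{\Omega}_{L_1}(y,x)\big\}=\inf_{y\in\partial\Omega}\big\{g(y)-S(y)+\Phi^{\Omega}_L(y,x)+S(y)-S(x)\big\}=u(x)-S(x),
$$
so $u_1:=u-S$ is precisely that value function. By Proposition~\ref{compatible_u}, Lemma~\ref{semi_value} and the Corollary following it, applied to $(L_1,\Omega,g_1)$, $u_1$ is a locally semiconcave viscosity solution of \eqref{eq:HJ_BP} for the Hamiltonian associated with $L_1$; and a direct Legendre-transform computation gives, for that Hamiltonian, $\sup_v\{\langle p,v\rangle-L_1(x,v)\}=\sup_v\{\langle p+DS(x),v\rangle-L(x,v)\}=H(x,p+DS(x))=H_1(x,p)$, while $u_1|_{\partial\Omega}=g-S=g_1$. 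Alternatively one can argue pointwise without invoking the general theory: since $S\in C^1$, $p\in D^{\pm}u_1(x)$ if and only if $p+DS(x)\in D^{\pm}u(x)$, and combining this with $H_1(x,p)=H(x,p+DS(x))$ transfers the viscosity sub/supersolution inequalities from $u$ to $u_1$.

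I do not expect a serious obstacle: this is a routine exact-one-form reduction, essentially the standard device reducing a mechanical system with a drift term to one with vanishing momentum at zero velocity. The only two points deserving care are the absolute-continuity identity $\int_s^t\langle DS(\xi),\dot\xi\rangle\,d\tau=S(\xi(t))-S(\xi(s))$ for merely $W^{1,1}$ arcs — which is exactly where $S\in C^1$ is used — and the verification that $L_1$ retains superlinearity, for which one exploits that only $L_1$ restricted to the compact set $\overline{\Omega}$ is relevant.
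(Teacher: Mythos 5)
Your proof is correct and follows essentially the same route as the paper's: you derive the shift formulas $A^{\Omega,L_1}_t(x,y)=A^{\Omega,L}_t(x,y)+S(x)-S(y)$ and $\Phi^{\Omega}_{L_1}=\Phi^{\Omega}_{L}+S(x)-S(y)$ from the exact-form integral identity along admissible arcs and then read off the conclusions. The extra care you take — noting the identity holds for \emph{every} $W^{1,1}$ arc so that the infimum transfers directly, verifying that $L_1$ remains Tonelli (using boundedness of $DS$ near $\overline{\Omega}$), and giving the pointwise sub/superdifferential argument for the viscosity property — is a small improvement in completeness over the paper's slightly terser version, which works with a minimizer and both inequalities, and leaves the Tonelli and viscosity checks implicit.
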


\begin{proof}
	We denote by $A^{\Omega,L_1}_t(x,y)$ the fundamental solutions with respect to $L_1$, and $\Phi^{\Omega}_{L_1}(x,y)$ the associated Ma\~n\'e's potential. For any $x,y\in\overline{\Omega}$ and $t>0$, there exists $\xi\in\Gamma^{0,t}_{x,y}(\overline{\Omega})$ such that
	\begin{align*}
		A^{\Omega,L_1}_t(x,y)=&\int^t_0L_1(\xi,\dot{\xi})\ ds=\int^t_0L(\xi,\dot{\xi})\ ds-\int^t_0\langle DS(\xi),\dot{\xi}\rangle\ ds\\
		=&\int^t_0L(\xi,\dot{\xi})\ ds-\int^t_0\left(\frac d{ds}S(\xi(s))\right) ds\\
		=&\int^t_0L(\xi,\dot{\xi})\ ds-(S(y)-S(x))\geqslant A^{\Omega}_t(x,y)-(S(y)-S(x)).
	\end{align*}
	The opposite direction of the inequality above can be obtained similarly. It follows
	\begin{equation}\label{eq:L_L1}
		\begin{split}
			A^{\Omega,L_1}_t(x,y)=&A^{\Omega}_t(x,y)-(S(y)-S(x)),\\
		\Phi^{\Omega}_{L_1}(x,y)=&\Phi^{\Omega}_L(x,y)-(S(y)-S(x)).
		\end{split}
	\end{equation}
	Therefore, for any $x\in\overline{\Omega}$,
	\begin{align*}
		u_1(x)=&\inf_{y\in\partial\Omega}\{g_1(y)+\Phi^{\Omega}_{L_1}(y,x)\}=\inf_{y\in\partial\Omega}\{g_1(y)+\Phi^{\Omega}_{L}(y,x)-S(x)+S(y)\}\\
		=&\inf_{y\in\partial\Omega}\{g(y)+\Phi^{\Omega}_{L}(y,x)\}-S(x)=u(x)-S(x).
	\end{align*}
	
	Finally, the relation $c_{\Omega}(L_1)=c_{\Omega}(L)$ is obvious since $A^{\Omega,L_1}_t(x,x)=A^{\Omega}_t(x,x)$ for any $x\in\overline{\Omega}$ and $t>0$ by \eqref{eq:L_L1} and the compatibility condition \eqref{eq:supcritical1} follows from \eqref{eq:L_L1} and the definition of $g_1$.
\end{proof}
}

\section*{Acknowledgements}
Piermarco Cannarsa is partly supported by the University of Rome Tor Vergata (Consolidate the Foundation 2015) and Istituto Nazionate di Alta Matematica (GNAMPA 2017 Research Projects). Wei Cheng is partly supported by Natural Scientific Foundation of China (Grant No. 11631006 and No.11790272). Kaizhi Wang is partly supported by National Natural Science Foundation of China (Grant No. 11771283).

%

\end{document}